\documentclass[hidelinks,onefignum,onetabnum]{siamart251216}

\usepackage{amsfonts}
\usepackage{graphicx}
\usepackage{epstopdf}
\usepackage{algorithmic}
\ifpdf
\DeclareGraphicsExtensions{.eps,.pdf,.png,.jpg}
\else
\DeclareGraphicsExtensions{.eps}
\fi

\newsiamremark{remark}{Remark}
\newsiamremark{hypothesis}{Hypothesis}
\crefname{hypothesis}{Hypothesis}{Hypotheses}
\newsiamthm{claim}{Claim}
\newsiamremark{fact}{Fact}
\crefname{fact}{Fact}{Facts}

\headers{An interior-point method with conjugate-free scaling}{Rui-Jin Zhang, Wenhao Fu, and Yu-Hong Dai}

\title{A primal--dual interior-point method for nonsymmetric conic optimization with conjugate-free scaling\thanks{Submitted to the editors DATE.
		\funding{ R.J.~Zhang was supported by the National Natural Science Foundation of China (No. 1250012017), the Natural Science Foundation of Tianjin, China (No. 24JCQNJC01970), and the Fundamental Research Funds for the Central Universities (Nos.  050-63253088 and 050-63263071). W.~Fu was supported by the Natural Science Foundation of Jiangsu Province (No. BK20250981). Y.H.~Dai was supported by the National Key  R\&D Program of China (Nos. 2021YFA1000300 and 2021YFA1000301) and the National Natural Science Foundation of China (No. 92473208).}}}

\author{Rui-Jin Zhang\thanks{School of Mathematical Sciences and LPMC, Nankai University, Tianjin, 300071, China 
		(\email{zhangrj@nankai.edu.cn}).}
	\and Wenhao Fu\thanks{School of Mathematical Sciences, Suzhou University of Science and Technology, Suzhou, 215009, China 
		(\email{wenhfu@usts.edu.cn}).}
	\and Yu-Hong Dai\thanks{State Key Laboratory of Mathematical Sciences, Academy of Mathematics and Systems Science, Chinese Academy of Sciences, Beijing 100190, China, and School of Mathematical Sciences,
		University of Chinese Academy of Sciences, Beijing 100049, China 
		(\email{dyh@lsec.cc.ac.cn}).}}
\usepackage{amsopn}
\usepackage{multirow}
\usepackage{amsmath,amssymb,amsfonts}
\usepackage{mathrsfs}
\usepackage[title]{appendix}
\usepackage{xcolor}
\usepackage{textcomp}
\usepackage{manyfoot}
\usepackage{booktabs}
\usepackage{algorithm,algorithmic}
\usepackage{graphicx}
\usepackage{subcaption}

\def\K{\mathbb{K}}

\def\dx{\Delta\bar{x}}
\def\dxx{\Delta {x}}
\def\dy{\Delta y}
\def\dz{\Delta z}
\def\ds{\Delta\bar{s}}
\def\dss{\Delta{s}}

\def\te{\omega^+}
\def\interior{\operatorname{int}}
\renewcommand{\[}{\begin{equation*}} 
	\renewcommand{\]}{\end{equation*}}

\newcommand{\intK}{\operatorname{int}(\mathbb{K})}
\newcommand{\norm}[1]{\| #1 \|} % 范数简写

\ifpdf
\hypersetup{
  pdftitle={A Primal--Dual Interior-Point Method for Nonsymmetric Conic Optimization with Conjugate-free Scaling},
  pdfauthor={Rui-Jin Zhang, Wenhao Fu, and Yu-Hong Dai}
}
\fi

\begin{document}

\maketitle

\begin{abstract}
We develop a primal--dual interior-point method for nonsymmetric conic optimization based on a conjugate-free scaling matrix. The scaling is obtained from a single-secant BFGS update of the primal barrier Hessian. In contrast to multi-secant BFGS scalings, it does not require conjugate-barrier derivatives. This feature is important for high-dimensional nonsymmetric cones, where conjugate-barrier derivatives may be unavailable in closed form or expensive to compute. We embed the conjugate-free scaling in a homogeneous self-dual predictor--corrector framework. Using a split central-path neighborhood that separately controls the conic variables and the scalar homogeneous variables, we prove that the scaling matrix remains uniformly comparable to the primal barrier Hessian. This comparison bound is used to prove neighborhood preservation and to show that the complementarity measure and the linear residual decrease at a uniform rate. Consequently, the method attains an iteration bound of $\mathcal{O}(\sqrt{\nu}\log(1/\varepsilon))$, improving the $\mathcal{O}(\nu\log(1/\varepsilon))$ bound of Badenbroek and Dahl [Optim. Methods Softw., 37 (2022), pp. 1027--1064] and matching the best-known complexity order for interior-point methods. Numerical experiments on instances involving the operator perspective epigraph cone and the quantum relative entropy cone show that the method is competitive with QICS, a specialized solver for conic models arising in quantum information.
\end{abstract}

\begin{keywords}
nonsymmetric conic optimization, interior-point method, polynomial complexity
\end{keywords}

\begin{MSCcodes}
90C25, 90C51, 90C30
\end{MSCcodes}

\section{Introduction}\label{sec1}
Nonsymmetric conic optimization arises in many applications whose structure cannot be captured efficiently by nonnegative, second-order, or semidefinite cone formulations alone. In this paper, we consider the primal--dual conic pair
\begin{equation}\label{eq:coneopt}
\begin{array}{ll}
(\operatorname{P}) \qquad&\min\, \left\{\langle {c}, {x}\rangle\,:\, A {x}={b},\,x\in  \K\right\},\\
(\operatorname{D}) \qquad&\max \left\{\langle {b}, y \rangle\,:\, A^{\top} y+{s}={c},\,s\in  \K^*\right\},
\end{array}
\end{equation}
where $A \in \mathbb{R}^{m \times n}$ has full row rank, $\K$ is a proper cone that is nonsymmetric, and $\K^*$ is its dual cone.  Important examples of nonsymmetric cones include the exponential cone \cite{dahl2022primal,lindstrom2023error}, the power cone \cite{lin2024generalized,roy2022self}, the relative entropy cone  \cite{chandrasekaran2016relative}, and the quantum relative entropy cone \cite{fawzi2023optimal}. These cones arise in geometric programming \cite{skajaa2015homogeneous},  entropy and relative entropy optimization \cite{chandrasekaran2017relative},  and quantum information \cite{he2026operator,karimi2025efficient}. This range of applications has made nonsymmetric conic optimization an active research area in optimization theory and scientific computing.

Interior-point methods (IPMs) provide an efficient algorithmic framework for this class of problems. The self-concordant barrier theory of Nesterov and Nemirovskii \cite{nesterov1994interior} provides the fundamental polynomial-time framework for IPMs over general convex cones. Building on this theory, Nesterov \cite{nesterov2012towards} developed a systematic approach to nonsymmetric conic optimization inspired by self-scaled barriers. Skajaa and Ye \cite{skajaa2015homogeneous} subsequently proposed a homogeneous infeasible-start method that requires only the primal barrier and can also detect infeasibility. More recently, Papp and Varga \cite{papp2025interior} showed that primal--dual feasible methods using full Newton steps can attain the $\mathcal{O}\left(\sqrt\nu\log(1/\varepsilon)\right)$ iteration bound, while requiring only a logarithmically homogeneous self-concordant barrier for the primal cone. This matches the best-known complexity order for both symmetric and nonsymmetric conic optimization. 

%Alongside these theoretical advances, several solvers have been developed for nonsymmetric conic optimization, including Alfonso \cite{papp2022alfonso}, Clarabel \cite{chen2025efficient}, DDS \cite{karimi2024domain}, Hypatia \cite{coey2022solving,coey2022performance}, and QICS \cite{he2024qics}. These solvers work directly with the barriers of the original nonsymmetric cones. This avoids lifted reformulations involving symmetric cones,  which may introduce many auxiliary variables and constraints.

Alongside these theoretical developments, considerable progress has been made in the implementation of IPMs for nonsymmetric cones. Alfonso \cite{papp2022alfonso} provides an extensible implementation of the corrected Skajaa--Ye framework and enables optimization over a general cone once a suitable logarithmically homogeneous self-concordant barrier is available for the cone or its dual. Hypatia \cite{coey2022solving,coey2022performance} further demonstrated the computational benefits of working directly with a broad collection of nonsymmetric cones and developed several enhancements to its generic interior-point algorithm. From a complementary computational perspective, Chen and Goulart \cite{chen2025efficient} showed that the low-rank and sparse structure of barrier Hessians can be exploited to obtain efficient linear algebra for several important nonsymmetric cones. The same authors also developed Clarabel \cite{goulart2026clarabel}, a general-purpose interior-point solver that supports both symmetric and nonsymmetric cones and handles quadratic objectives directly. DDS \cite{karimi2024domain} takes a different approach based on the Domain-Driven framework. It implements an infeasible-start primal--dual method and supports a broad range of structured convex constraints. QICS \cite{he2024qics}, by contrast, is specialized for conic optimization problems arising in quantum information. Its strong reported performance on quantum relative entropy optimization makes QICS a particularly demanding benchmark for the numerical experiments in this paper. These solvers make it possible to work directly with the barriers of the original nonsymmetric cones and can avoid symmetric-cone reformulations that introduce additional variables and constraints.

A common strategy in primal--dual IPMs is to construct a scaling matrix that balances the primal and dual variables in a suitable local metric. For symmetric cones, the Nesterov--Todd (NT) scaling is a central construction. Nesterov and Todd \cite{nesterov1997self,nesterov1998primal} developed the theory of self-scaled barriers and showed that each interior primal--dual pair admits a unique scaling point whose barrier Hessian defines a positive definite scaling. This scaling satisfies two characteristic relations: it maps the primal variable to the dual variable and maps the conjugate-barrier gradient to the primal-barrier gradient. Moreover, the NT scaling can be bounded in terms of the primal and dual barrier Hessians, a property that plays a key role in the polynomial-time analysis. 
%Todd, Toh, and T\"ut\"unc\"u \cite{todd1998nesterov} provided a detailed computational study of the NT direction for semidefinite programming, showing that it can be computed efficiently and interpreted as a Newton direction. 
Tun\c{c}el \cite{tunccel1998primal} further related this structure to primal--dual symmetry and scale invariance. Subsequently, Hauser and G\"uler \cite{hauser2002self} established the close connection between self-scaled barriers and the symmetry groups of symmetric cones. Beyond these structural properties, the self-scaled framework has important algorithmic advantages. Nesterov and Todd \cite{nesterov1997self,nesterov1998primal} showed that it supports long-step primal--dual methods whose steps can typically move a large fraction of the way toward the boundary, rather than being restricted to the unit ball of the local barrier norm.

For nonsymmetric cones, however, the self-scaled structure underlying the NT scaling is generally unavailable, and a corresponding scaling point satisfying both primal--dual relations does not necessarily exist. A different approach is therefore to construct a positive definite scaling matrix directly from the current primal--dual pair while imposing analogues of the NT relations. Tun{\c c}el \cite{tunccel2001generalization} generalized primal--dual IPMs to arbitrary convex cones and showed that the two secant relations can be enforced through low-rank quasi-Newton updates of a general positive definite matrix, without relying on an NT scaling
point. Myklebust and Tun\c{c}el \cite{myklebust2014interior} further developed this framework and strengthened its connection with quasi-Newton updates. Dahl and Andersen \cite{dahl2022primal} specialized this approach to exponential-cone optimization and developed a practical method based on a multi-secant BFGS scaling. For the exponential cone, the resulting family of scalings is characterized by a single scalar, making the scaling particularly convenient to compute. Badenbroek and Dahl \cite{badenbroek2022algorithm} subsequently provided a rigorous polynomial-time analysis of this multi-secant BFGS scaling framework, thereby giving a theoretical foundation for the practical algorithm and establishing an $\mathcal{O}\left(\nu\log(1/\varepsilon)\right)$ iteration bound.

%established an $\mathcal{O}\left(\nu\log(1/\varepsilon)\right)$ iteration bound based on the multi-secant BFGS scaling.

Despite this progress, existing multi-secant BFGS scalings still require conjugate-barrier derivatives. In many nonsymmetric cones, evaluating the conjugate-barrier derivatives is substantially harder than evaluating the primal barrier derivatives. This issue was emphasized by Kapelevich, Andersen, and Vielma \cite{kapelevich2024computing}, who showed that conjugate-barrier information can be difficult to compute and may become a bottleneck in high-dimensional settings. This raises a natural question: 

\vspace{5pt}\noindent
\textit{Can one remove the secant equation involving conjugate-barrier derivatives and improve the $\mathcal{O}\left(\nu\log(1/\varepsilon)\right)$ iteration bound  established in \cite{badenbroek2022algorithm}?}
\vspace{5pt}

To address this question, we construct a conjugate-free scaling matrix. Starting from the primal barrier Hessian, we form a single-secant BFGS update that links the current primal and dual variables without requiring the secant equation involving conjugate-barrier derivatives. The resulting scaling matrix depends only on primal barrier information and the current primal--dual iterate. Hence it is easier to construct for general nonsymmetric cones, especially when conjugate-barrier derivatives are unavailable in closed form or expensive to compute. The main estimate shows that the resulting scaling matrix is uniformly comparable to the primal barrier Hessian in the split central-path neighborhood used by the algorithm. This comparison provides the estimates needed to establish an $\mathcal{O}(\sqrt{\nu}\log(1/\varepsilon))$ iteration bound.

We also test the proposed method on instances involving the operator perspective epigraph cone and the quantum relative entropy cone. These cones provide natural benchmarks because their barriers involve matrix logarithms, and the evaluation of derivatives can become a major computational cost in high dimensions.
%We compare the proposed method with QICS, which provides specialized implementations for conic models arising in quantum information. 
The numerical results show that the proposed method is competitive with QICS on the tested instances, while using a scaling construction that does not require conjugate-barrier derivatives.

The remainder of the paper is organized as follows. \Cref{sec2} recalls the barrier identities and local norm estimates used throughout the analysis.  \Cref{sec3} introduces the homogeneous self-dual embedding, the split central-path neighborhood, the initialization, the conjugate-free scaling matrix, and the predictor--corrector algorithm. \Cref{sec4} analyzes the predictor and corrector steps separately, proves neighborhood preservation for one predictor--corrector iteration, and establishes an $\mathcal{O}(\sqrt{\nu}\log(1/\varepsilon))$ iteration bound. \Cref{sec5} reports numerical experiments, and \Cref{sec6} concludes the paper.

\textbf{Notation.} Throughout the paper, let $\mathbb E$ be a finite-dimensional Euclidean space equipped with the inner product $\langle\cdot,\cdot\rangle$, and let $\|\cdot\|$ denote the Euclidean norm. For a proper cone $\K\subseteq\mathbb E$, we denote its interior by $\intK$. For a differentiable barrier $F$ on $\intK$, we denote by $g_x:=\nabla F(x)$ and $H_x:=\nabla^2F(x)$ the gradient and Hessian of $F$ at $x$. Whenever $H_x\succ0$, the associated local norm and dual local norm are
\[
\|u\|_x:=\langle u,H_xu\rangle^{1/2},\quad\|v\|_x^*:=\langle v,H_x^{-1}v\rangle^{1/2}.
\]
More generally, for any symmetric positive definite matrix $M$, we define
\[
\|u\|_M:=\langle u,Mu\rangle^{1/2},
\quad
\|v\|_M^*:=\langle v,M^{-1}v\rangle^{1/2}.
\]
The corresponding Cauchy--Schwarz inequality is
\[
|\langle u,v\rangle|=
|\langle M^{1/2}u,M^{-1/2}v\rangle|
\le
\|u\|_M\|v\|_M^*.
\]
We use $\|\cdot\|_2$ and $\|\cdot\|_F$ for the spectral norm and the Frobenius norm, respectively. For a linear operator $T:\mathbb E\to\mathbb E$,
we denote by $\|T\|_M^{\rm op}:=\|M^{1/2}TM^{-1/2}\|_2$ the operator norm induced by $\|\cdot\|_M$. For a symmetric matrix $B$,  we use the relative matrix norm $\|B\|_M:=\|M^{-1/2}BM^{-1/2}\|_2.$ When $M=H_x$, we use $\|\cdot\|_x^{\rm op}$ for the induced operator norm and $\|\cdot\|_x$ for the relative matrix norm.

\section{Preliminaries}\label{sec2}
This section recalls the basic estimates for self-concordant barriers and then specializes them to $\nu$-logarithmically homogeneous self-concordant barriers for proper cones. These results provide the norm estimates and barrier identities used in the scaling and complexity analysis.

\begin{definition}
\label{def:self-concordance}
Let $\mathbb{X}\subseteq \mathbb{E}$ be a nonempty open convex set. A convex function $F:\mathbb{X}\to\mathbb{R}$ is called self-concordant if it is three times continuously differentiable and~
\begin{equation}
    |D^3F(x)[h,h,h]|
    \le
    2\big(D^2F(x)[h,h]\big)^{3/2},
    \quad
 \text{for all } x\in\mathbb{X} \text{ and } h\in\mathbb E.
 \label{eq:self-concordance}
\end{equation}
In addition, if $F(x^{(k)})\to+\infty$ for every sequence $\{x^{(k)}\}\subseteq\mathbb{X}$ converging to a boundary point of $\mathbb{X}$, then $F$ is called a self-concordant barrier for $\mathbb{X}$.
\end{definition}

The following lemma collects the local estimates used in the analysis. The two bounds in \eqref{eq:aux_ineq} follow from \cite[Lemmas~4 and~5]{papp2017homogeneous}, while the Hessian perturbation bounds are standard consequences of self-concordant theory  \cite[Theorem~2.2.1]{renegar2001mathematical}.
\begin{lemma}
\label{lemma:perturb result}
Let $F$ be a self-concordant barrier on a nonempty open convex set $\mathbb{X}$ with $H_u\succ0$ for every $u\in\mathbb{X}$, and let $x\in\mathbb{X}$. If $u\in\mathbb E$ satisfies $\|u-x\|_x<1$, then $u\in\mathbb{X}$. For every $v\in\mathbb E$,
\begin{align}\label{eq:aux_ineq}
\left\| v \right\|_u^* \leq \frac{\left\|  v \right\|_x^*}{1 - \| u - x \|_x},\quad
\|g_u-g_x\|_{x}^{*}\le \frac{\|u-x\|_{x}}{1-\|u-x\|_{x}}.
\end{align}
Furthermore, the Hessian satisfies
\begin{equation}
    \big\|H_{u}^{-1}H_x\big\|_x^{\rm op}
    \le
    \frac{1}{(1-\|u-x\|_x)^2},
    \quad
    \big\|H_x^{-1}H_{u}-I\big\|_x^{\rm op}
    \le
    \frac{1}{(1-\|u-x\|_x)^2}-1 .
   \label{eq:sc_hessian_comp}
\end{equation}
\end{lemma}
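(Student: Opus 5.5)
The plan is to reduce every claim to the one-dimensional restriction of $F$ along the segment from $x$ to $u$, and then integrate. Set $h:=u-x$, $r:=\|h\|_x<1$, and $x_t:=x+th$ for $t\in[0,1]$. Define $\phi(t):=\langle h,H_{x_t}h\rangle=\|h\|_{x_t}^2$. Self-concordance \eqref{eq:self-concordance} gives $|\phi'(t)|\le 2\phi(t)^{3/2}$, that is, $|\tfrac{d}{dt}\phi^{-1/2}|\le 1$.

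\textbf{Membership.} Integrating the differential inequality gives $\|h\|_{x_t}\le r/(1-tr)$ for as long as $x_t$ stays in $\mathbb{X}$. Consider $t^*:=\sup\{t\le 1: x_s\in\mathbb{X}\text{ for all } s\le t\}$. On $[0,t^*)$ the local norm $\|h\|_{x_t}$ is bounded by $r/(1-r)$. Because $F$ is a barrier, if $x_{t^*}$ were on the boundary we would have $F(x_t)\to\infty$. This contradicts the bound $F(x_t)\le F(x)+t\langle g_x,h\rangle+\int\!\!\int\phi$, which stays finite. Hence $u\in\mathbb{X}$.

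\textbf{Hessian comparison.} For an arbitrary direction $w$, set $\psi(t):=\|w\|_{x_t}^2$. The standard consequence of \eqref{eq:self-concordance} through polarization (trilinear symmetry) is $|D^3F(x_t)[h,w,w]|\le 2\|h\|_{x_t}\|w\|_{x_t}^2$. This yields $|\psi'|\le 2\frac{r}{1-tr}\psi$. Integrating gives
\[
(1-r)^2\|w\|_x^2\le \|w\|_u^2\le (1-r)^{-2}\|w\|_x^2 .
\]
This says $(1-r)^2H_x\preceq H_u\preceq (1-r)^{-2}H_x$, which is \cite[Theorem~2.2.1]{renegar2001mathematical}. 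The first bound in \eqref{eq:sc_hessian_comp} follows because $H_x^{1/2}H_u^{-1}H_x^{1/2}\preceq(1-r)^{-2}I$. For the second bound, the eigenvalues of $H_x^{-1/2}H_uH_x^{-1/2}-I$ lie in $[(1-r)^2-1,(1-r)^{-2}-1]$. Since $1-(1-r)^2\le (1-r)^{-2}-1$, the spectral norm is at most $(1-r)^{-2}-1$.

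\textbf{Dual norm and gradient.} The left bound in \eqref{eq:aux_ineq} follows directly from $H_u^{-1}\preceq(1-r)^{-2}H_x^{-1}$, taking square roots. For the gradient bound, write
\[
g_u-g_x=\int_0^1H_{x_t}h\,dt,
\]
so that
\[
\|g_u-g_x\|_x^*\le\int_0^1\|H_{x_t}h\|_x^*\,dt .
\]
Using $H_{x_t}\preceq(1-tr)^{-2}H_x$ we get $\|H_{x_t}h\|_x^*\le (1-tr)^{-1}\|h\|_{x_t}$. This follows by writing $\|H_{x_t}h\|_x^*=\|H_x^{-1/2}H_{x_t}^{1/2}\cdot H_{x_t}^{1/2}h\|$ and bounding the operator norm $\|H_x^{-1/2}H_{x_t}^{1/2}\|_2\le(1-tr)^{-1}$. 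We also have $\|h\|_{x_t}\le r/(1-tr)$. Hence
\[
\|g_u-g_x\|_x^*\le\int_0^1\frac{r}{(1-tr)^2}\,dt=\frac{r}{1-r},
\]
as claimed.

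\textbf{Main obstacle.} The main difficulty is the mixed third-derivative bound $|D^3F[h,w,w]|\le2\|h\|\,\|w\|^2$, which follows from the diagonal inequality only through a nontrivial polarization argument for symmetric trilinear forms. Rather than reprove it, I would cite it from \cite{renegar2001mathematical} or Nesterov--Nemirovskii. Similarly, I would cite \cite[Lemmas~4 and~5]{papp2017homogeneous} as the paper itself indicates.
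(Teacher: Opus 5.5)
Your argument is correct, and it goes further than the paper, which gives no proof of this lemma. The paper cites \cite[Lemmas~4 and~5]{papp2017homogeneous} for \eqref{eq:aux_ineq} and \cite[Theorem~2.2.1]{renegar2001mathematical} for \eqref{eq:sc_hessian_comp} as separate facts.

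You instead derive everything from the third-derivative condition \eqref{eq:self-concordance} by restricting $F$ to the segment $x_t=x+th$:
\begin{itemize}
\item Integrating $|\tfrac{d}{dt}\phi^{-1/2}|\le 1$, together with the barrier property, gives membership and $\|h\|_{x_t}\le r/(1-tr)$.
\item Integrating the mixed bound gives the single sandwich $(1-r)^2H_x\preceq H_u\preceq(1-r)^{-2}H_x$. From it, the dual-norm bound and both operator-norm bounds are pure linear algebra; your inequality $1-(1-r)^2\le(1-r)^{-2}-1$ is exactly what the second operator bound needs.
\item The gradient bound follows by integrating $\|H_{x_t}h\|_x^*\le r/(1-tr)^2$ over $[0,1]$.
\end{itemize}

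Your route buys two things. First, transparency: all four inequalities become corollaries of one Hessian comparison. Second, consistency with the paper's own definition. Renegar takes the Dikin-ellipsoid/Hessian-comparison property as his \emph{definition} of self-concordance. So invoking his Theorem~2.2.1 under Definition~\ref{def:self-concordance} tacitly relies on the equivalence of the two definitions, and your integration argument supplies that equivalence directly. The price is the polarization inequality $|D^3F(x)[h,w,w]|\le 2\|h\|_x\|w\|_x^2$, which is standard and fine to cite.

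Two small details should be added:
\begin{itemize}
\item After ruling out $x_{t^*}\notin\mathbb{X}$, note that openness of $\mathbb{X}$ forces $t^*=1$, since otherwise the supremum could be enlarged.
\item Set aside the trivial case $h=0$, where $\phi^{-1/2}$ is undefined.
\end{itemize}
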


We also need the following damped Newton residual bound for the case where the Hessian is replaced by an approximate matrix.
\begin{theorem}\label{thm:inexact_newton}
Let $F$ be a self-concordant barrier on a nonempty open convex set $\mathbb{X}$ with $H_u\succ0$ for every $u\in\mathbb{X}$, and let $x\in\mathbb{X}$. Suppose that $W\succ0$ satisfies
\begin{equation}\label{eq:relative_H_approx}
(1-\delta)\,H_x\ \preceq\ W\ \preceq\ (1+\delta)\,H_x
\quad\text{for some }\delta\in[0,1).
\end{equation}
Define the inexact Newton direction $\tilde n(x):=-W^{-1}g_x$, and for a step length $\alpha\in[0,1]$ set
$
x^+:=x+\alpha \tilde n(x),
\,
\rho:=\|x^+-x\|_x=\alpha\|\tilde n(x)\|_x.
$
Assume $\rho<1$. Then $x^+\in \mathbb{X}$ and the exact Newton decrement at $x^+$ satisfies
\begin{equation}\label{eq:main_bound}
\|n(x^+)\|_{x^+}
\ \le\
\left(\frac{\rho}{1-\rho}\right)^2
\ +\
\frac{(1-\alpha+\delta)\,\|\tilde n(x)\|_x}{1-\rho},
\end{equation}
where $n(x^+):=-H_{x^+}^{-1}g_{x^+}$.
\end{theorem}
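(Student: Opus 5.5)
The plan is to follow the classical damped-Newton analysis: express $g_{x^+}$ as the sum of a Taylor remainder and a term coming from the inexactness of $W$ and from $\alpha<1$, and measure both in the dual local norm at $x$. Write $d:=x^+-x=\alpha\tilde n(x)$ with $\|d\|_x=\rho<1$. \Cref{lemma:perturb result} then gives $x^+\in\mathbb{X}$ and lets us pass from $\|\cdot\|_x^*$ to $\|\cdot\|_{x^+}^*$ at the cost of a factor $1/(1-\rho)$. Since $\|n(x^+)\|_{x^+}=\|g_{x^+}\|_{x^+}^*$, it suffices to bound $\|g_{x^+}\|_x^*$ and then multiply by $1/(1-\rho)$.

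\textbf{Decomposition.} We write
\[
g_{x^+}=\bigl(g_{x^+}-g_x-H_xd\bigr)+\bigl(g_x+H_xd\bigr).
\]
For the second term, $H_xd=-\alpha H_xW^{-1}g_x$ and $g_x=-W\tilde n(x)$, so
\[
g_x+H_xd=-\bigl(W-\alpha H_x\bigr)\tilde n(x)=-(1-\alpha)H_x\tilde n(x)-(W-H_x)\tilde n(x).
\]
By \eqref{eq:relative_H_approx}, $-\delta H_x\preceq W-H_x\preceq\delta H_x$, hence $\|H_x^{-1/2}(W-H_x)H_x^{-1/2}\|_2\le\delta$. This yields $\|(W-H_x)\tilde n(x)\|_x^*\le\delta\|\tilde n(x)\|_x$, and also $\|(1-\alpha)H_x\tilde n(x)\|_x^*=(1-\alpha)\|\tilde n(x)\|_x$. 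So the second term has dual norm at most $(1-\alpha+\delta)\|\tilde n(x)\|_x$. After dividing by $1-\rho$, this matches the second summand of \eqref{eq:main_bound}.

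\textbf{Taylor remainder (main obstacle).} Write $g_{x^+}-g_x-H_xd=\int_0^1(H_{x+td}-H_x)d\,dt$. By \eqref{eq:sc_hessian_comp} with $\|td\|_x=t\rho$,
\[
\|(H_{x+td}-H_x)d\|_x^*\le\Bigl(\tfrac{1}{(1-t\rho)^2}-1\Bigr)\rho.
\]
This uses the fact that $\|H_x^{-1}H_u-I\|_x^{\rm op}$ equals the spectral norm of $H_x^{-1/2}H_uH_x^{-1/2}-I$. Integrating gives $\rho\bigl(\tfrac{1}{1-\rho}-1\bigr)=\tfrac{\rho^2}{1-\rho}$.

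Combining the two terms and converting norms,
\[
\|g_{x^+}\|_{x^+}^*\le\frac{1}{1-\rho}\Bigl(\frac{\rho^2}{1-\rho}+(1-\alpha+\delta)\|\tilde n(x)\|_x\Bigr),
\]
which is exactly \eqref{eq:main_bound}. The delicate point is doing this remainder estimate with the sharp constant so that the first summand comes out as $(\rho/(1-\rho))^2$. If the integrated operator bound were loose, I would instead use the standard self-concordance estimate $\|g_{x^+}-g_x-H_xd\|_x^*\le\rho^2/(1-\rho)$ (Nesterov's theorem on gradient perturbation) directly.
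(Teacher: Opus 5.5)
Your proposal is correct and follows essentially the same route as the paper's proof. Both split $g_{x^+}$ into $g_x+\alpha H_x\tilde n(x)=(H_x-W)\tilde n(x)-(1-\alpha)H_x\tilde n(x)$ plus the integral Taylor remainder bounded by $\rho^2/(1-\rho)$ via \eqref{eq:sc_hessian_comp}; the only cosmetic difference is that you get the factor $1/(1-\rho)$ relating $\|g_{x^+}\|_x^*$ to $\|n(x^+)\|_{x^+}=\|g_{x^+}\|_{x^+}^*$ directly from the first bound in \eqref{eq:aux_ineq}, whereas the paper derives it from $\|H_{x^+}^{-1}H_x\|_x^{\rm op}\le(1-\rho)^{-2}$ and Cauchy--Schwarz.
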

\begin{proof}
The proof is given in Appendix~\ref{secA1}.
\end{proof}

When $\delta=0$, Theorem~\ref{thm:inexact_newton} reduces to the standard damped Newton estimate with the exact Hessian \cite[Theorem 3]{papp2017homogeneous}. The additional parameter $\delta$ quantifies the error caused by replacing $H_x$ with an approximate matrix. We now specialize to conic barriers with logarithmic homogeneity.

\begin{definition}
\label{def:lhscb}
Let $\K\subseteq \mathbb{E}$ be a proper cone with nonempty interior. A self-concordant barrier $F:\interior(\K)\to\mathbb{R}$ is called $\nu$-logarithmically homogeneous if there exists a constant $\nu>0$ such that
\begin{equation}\label{eq:log-homogeneity}
    F(tx)=F(x)-\nu\log t,
    \quad x\in\interior(\K),\ t>0.
\end{equation}
In this case, $F$ is called a $\nu$-logarithmically homogeneous self-concordant barrier, or $\nu$-LHSCB, for $\K$.
\end{definition}

Logarithmic homogeneity yields several identities that will be used repeatedly in the complexity analysis of the interior-point method. These identities are standard for $\nu$-LHSCBs; see, for example, \cite{dahl2022primal,nesterov1997self,nesterov1998primal}.
\begin{lemma}\label{lem:loghom-id}
Let $F$ be a $\nu$-LHSCB for $\K$.  Then, for every
$x\in\intK$,
	\begin{equation}\label{eq:Hxgrad}
		H_{{x}} x=-g_{ x},
		\quad
		\langle g_{ x}, x\rangle=-{\nu},
		\quad
		\|g_{ x}\|_{ x}^*=\sqrt{{\nu}}.
	\end{equation}
\end{lemma}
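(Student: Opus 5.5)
The plan is to derive all three identities in \eqref{eq:Hxgrad} by differentiating the homogeneity relation \eqref{eq:log-homogeneity}, once with respect to $t$ and once with respect to $x$.

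\textbf{Second identity.} Fix $x\in\intK$. Differentiating $F(tx)=F(x)-\nu\log t$ in $t$ gives $\langle g_{tx},x\rangle=-\nu/t$. Setting $t=1$ yields $\langle g_x,x\rangle=-\nu$.

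\textbf{First identity.} Differentiating \eqref{eq:log-homogeneity} in $x$ instead gives $t\,g_{tx}=g_x$, so $g_{tx}=t^{-1}g_x$ for all $t>0$; that is, the gradient is homogeneous of degree $-1$. Differentiating $g_{tx}=t^{-1}g_x$ in $t$ gives $H_{tx}x=-t^{-2}g_x$. At $t=1$ this is $H_xx=-g_x$.

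\textbf{Third identity.} Here we need $H_x\succ0$ so that the dual norm is defined. This holds for a self-concordant barrier on a proper cone: the cone contains no lines, so the Hessian is nondegenerate, which is the standing assumption in Lemma~\ref{lemma:perturb result}. The first identity then gives $H_x^{-1}g_x=-x$. Hence
\[
\left(\|g_x\|_x^*\right)^2=\langle g_x,H_x^{-1}g_x\rangle=-\langle g_x,x\rangle=\nu,
\]
using the second identity, and so $\|g_x\|_x^*=\sqrt{\nu}$.

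\textbf{Main obstacle.} The argument is mostly routine. The only points needing care are the exchange of differentiation order and the positive definiteness of $H_x$. The former is justified because $F$ is $C^3$ by Definition~\ref{def:self-concordance}. The latter is the nondegeneracy argument above.
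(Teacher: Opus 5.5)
Your proof is correct and is the standard derivation. Differentiating the homogeneity relation in $t$ and in $x$, and then using $H_x\succ0$ (valid because $\intK$ contains no lines), is exactly the argument behind this lemma; the paper does not prove it but cites it as standard from Nesterov--Todd and Dahl--Andersen, and you also need no interchange of derivatives, since the chain rule applied to $t\mapsto g_{tx}$ with $F\in C^2$ already gives $H_{tx}x=-t^{-2}g_x$.
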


Throughout the paper, we use the fact that $\nu\ge1$ for any $\nu$-LHSCB on a nontrivial proper cone \cite[Corollary~2.3.3]{nesterov1994interior}.
Although the scaling matrix introduced later does not require explicit evaluation of conjugate-barrier quantities, the conjugate barrier is useful for describing the standard primal--dual correspondence of $\nu$-LHSCBs and for proving strict dual feasibility. The conjugate barrier is defined by
\begin{equation}
    F^*(s)
    :=
    \sup_{x\in\intK}
    \{-\langle s,x\rangle-F(x)\},
    \quad
    s\in\operatorname{int}(\K^*).
    \label{eq:conjugate-barrier}
\end{equation}
For $s\in\operatorname{int}(\K^*)$, we denote its gradient and Hessian by $g_s^*:=\nabla F^*(s)$ and $H_s^*:=\nabla^2F^*(s)$, respectively. The corresponding local norms are defined by
\begin{equation}
    \|u\|_s
    :=
    \langle u,H^*_s u\rangle^{1/2},
    \quad
    \|v\|_s^*
    :=
    \langle v,(H^*_s)^{-1}v\rangle^{1/2}.
    \label{eq:dual-barrier-local-norm}
\end{equation}

For a $\nu$-LHSCB, the
gradient mapping $\nabla F$ gives a bijection between
$\intK$ and $-\operatorname{int}(\K^*)$, and the Hessian of the conjugate barrier is the inverse of the primal Hessian at the corresponding point. We record this standard result below.

\begin{theorem}[\protect{\cite[Theorems 7 and 8]{papp2017homogeneous}}]\label{thm:dual interior}
Let $F$ be a $\nu$-LHSCB for $\K$. Then $F^*$ is a $\nu$-LHSCB
for $\K^*$, and the following properties hold:
\begin{enumerate}
\item[(i)] The gradient mapping $\nabla F:\intK \to -\operatorname{int}(\K^*)$ is a bijection.
\item[(ii)] If $x\in\intK$ and $s\in\operatorname{int}(\K^*)$ satisfy $s=-g_x$, then
\begin{equation*}
-g^*_s=x
\quad\text{and}\quad
H^*_s=H_x^{-1}.
\end{equation*}
Furthermore, we have $\|v\|_{-g_x}=\|v\|_x^*$ for all $v\in \mathbb{E}.$
\end{enumerate}
\end{theorem}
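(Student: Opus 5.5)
The approach is to derive everything from convex duality, namely the Fenchel--Young equality, together with the logarithmic homogeneity of $F$.

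\textbf{Step 1: $F^*$ is a $\nu$-LHSCB.} First, $F^*$ is finite exactly on $\operatorname{int}(\K^*)$. For $s\in\operatorname{int}(\K^*)$ we have $\langle s,x\rangle\ge c\|x\|$ on $\K$, and $F$ grows at most logarithmically along rays ($F(tx)=F(x)-\nu\log t$). Hence $-\langle s,x\rangle-F(x)\to-\infty$ as $\|x\|\to\infty$ or as $x\to\partial\K$, so the supremum is attained at a unique point by strict convexity. If instead $s\notin\K^*$, pick $x\in\K$ with $\langle s,x\rangle<0$; moving along the ray through $x$ sends the objective to $+\infty$.

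At the maximizer $x$, stationarity gives $s=-g_x$. Logarithmic homogeneity follows from the substitution $x=u/t$:
\[
F^*(ts)=\sup_u\{-\langle s,u\rangle-F(u)\}-\nu\log t .
\]
Self-concordance of $F^*$ is the standard fact that the Legendre transform preserves the bound \eqref{eq:self-concordance}. It follows from the Hessian relation in Step 3 by differentiating $H^*_{-g_x}=H_x^{-1}$ once more, which turns $D^3F^*$ into $D^3F$ evaluated at transformed directions. The barrier property of $F^*$ holds because, as $s\to\partial\K^*$, the supremum is unbounded: choose $x$ with $\langle s,x\rangle\to 0$ along a ray.

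\textbf{Step 2: the gradient map is a bijection (i).} For $x\in\intK$ we need $-g_x\in\operatorname{int}(\K^*)$. By convexity and \eqref{eq:Hxgrad}, for any $y\in\intK$,
\[
\langle -g_x,y\rangle\ge\langle -g_x,x\rangle+\ldots
\]
A cleaner route is the following. Since $F(x+ty)\le F(x)$ for $y\in\K$ (the barrier is nonincreasing along recession directions, a consequence of homogeneity and convexity), we get $\langle g_x,y\rangle\le0$, so $-g_x\in\K^*$. Interiority follows from $H_x\succ0$ and openness of the gradient map; alternatively, strict inequality for $y\ne0$ follows from strict convexity.

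Injectivity follows from the strict convexity of $F$. Surjectivity onto $-\operatorname{int}(\K^*)$ is Step 1's attainment: every $s\in\operatorname{int}(\K^*)$ is $-g_x$ at the maximizer $x$.

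\textbf{Step 3: the relations in (ii).} By Fenchel duality, $s=-g_x$ is equivalent to $x$ being the maximizer, and the envelope theorem gives $g^*_s=-x$. Differentiating the identity $g^*_{-g_x}=-x$ with respect to $x$ gives $-H^*_{-g_x}H_x=-I$, that is, $H^*_s=H_x^{-1}$. The norm identity is then immediate from the definitions \eqref{eq:dual-barrier-local-norm}:
\[
\|v\|_{-g_x}^2=\langle v,H_x^{-1}v\rangle=(\|v\|_x^*)^2 .
\]

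The main obstacle is the self-concordance of $F^*$ in Step 1, specifically transferring the third-derivative bound through the inverse map. I would differentiate $H^*_{s}=H^{-1}_{x(s)}$ along a direction $h$, using $Dx(s)[h]=-H_x^{-1}h$. This gives
\[
D^3F^*(s)[h,h,h]=-D^3F(x)[w,w,w]\quad\text{with } w=H_x^{-1}h\ (\text{up to sign}).
\]
Since $\langle w,H_xw\rangle=\langle h,H^*_sh\rangle$, the bound \eqref{eq:self-concordance} transfers verbatim.
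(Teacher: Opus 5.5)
The paper gives no proof of this statement; it imports it from \cite[Theorems 7 and 8]{papp2017homogeneous}. Your Legendre-duality reconstruction is sound for most of the statement. The following pieces are all correct:
\begin{itemize}
\item attainment and uniqueness of the maximizer;
\item logarithmic homogeneity of $F^*$;
\item part (i), via your second argument for $\langle g_x,y\rangle\le 0$, interiority through the inverse function theorem or strict convexity, injectivity, and surjectivity from attainment;
\item part (ii), via $g^*_s=-x(s)$ and differentiation of $g^*_{-g_x}=-x$;
\item the transfer $D^3F^*(s)[h,h,h]=D^3F(x)[w,w,w]$ with $w=H_x^{-1}h$ and $\|w\|_x=\|h\|_s$ (the sign is in fact $+$).
\end{itemize}

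The gap is the barrier property of $F^*$: you need $F^*(s_k)\to+\infty$ whenever $s_k\to\bar s\in\partial\K^*$. The same fact is needed for your claim that $F^*$ is finite \emph{exactly} on $\operatorname{int}(\K^*)$, since you only treat $s\notin\K^*$.

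Your one-line justification does not work as written. If $\bar s\neq0$ and $x\in\K\setminus\{0\}$ satisfies $\langle\bar s,x\rangle=0$, then $x\in\partial\K$, where $F$ is not defined, so you cannot plug it in. If you instead take $x\in\intK$ near such a point and optimize along its ray, you get $F^*(s)\ge\nu\log\nu-\nu-\nu\log\langle s,x\rangle-F(x)$. This is a competition between two divergent terms. The bound obtainable from homogeneity and convexity, $F(d+\epsilon x_0)\le F(d+x_0)+\nu\log(1/\epsilon)$, only shows that this lower bound stays bounded below, not that it diverges.

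The missing ingredient is that $F$ decreases to $-\infty$ along every nonzero recession direction. This is where self-concordance is genuinely used. Proceed as follows.
\begin{itemize}
\item Choose $d\in\K\setminus\{0\}$ with $\langle\bar s,d\rangle=0$; it exists because $\bar s\notin\operatorname{int}(\K^*)$. Fix $x_0\in\intK$ and set $\phi(t):=F(x_0+td)$.
\item Your Step 2 gives $\phi'\le 0$.
\item The inequality $|\phi'''|\le 2(\phi'')^{3/2}$ gives $\phi''(\tau)\ge\phi''(t)/(1+(\tau-t)\sqrt{\phi''(t)})^2$ for $\tau\ge t$.
\item Integrating over $[t,\infty)$ and using $\lim\phi'\le0$ yields $\phi'(t)\le-\sqrt{\phi''(t)}\le-\|d\|_{x_0}/(1+t\|d\|_{x_0})$.
\item Hence $F(x_0+td)\le F(x_0)-\log(1+t\|d\|_{x_0})\to-\infty$.
\item Then $F^*(s_k)\ge-\langle s_k,x_0+td\rangle-F(x_0+td)$. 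Letting $k\to\infty$ and afterwards $t\to\infty$ gives $F^*(s_k)\to+\infty$.
\end{itemize}

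So the delicate point is not the third-derivative transfer, which is mechanical, but the closedness of $F^*$. This is also exactly what the paper relies on when it applies Lemma~\ref{lemma:perturb result} to $F^*$ at $-g_{x^+}$ in the proof of Theorem~\ref{thm:neighborhood of predictor}.
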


\section{Interior-point method for nonsymmetric conic optimization}\label{sec3}
This section presents the proposed primal--dual interior-point method for nonsymmetric conic optimization. We first introduce the homogeneous self-dual (HSD) embedding and the split central-path neighborhood, then specify an initial point, construct the conjugate-free scaling matrix, and finally state the predictor--corrector algorithm.
\subsection{Homogeneous self-dual embedding and split central-path neighborhood}
Conic optimization problems may be infeasible or unbounded. To detect such cases effectively, the HSD embedding has been widely used.

Let $F$ be a $\nu$-LHSCB for $\K$. The associated barrier subproblem for $(\operatorname{P})$ in \eqref{eq:coneopt} takes the form
\begin{equation}\label{eq:barrier subproblem}
\min \left\{ \langle c,x\rangle+\mu F(x) \::\: Ax = b,\,x\in \operatorname{int}(\K)\right\},
\end{equation}
where $\mu >0$ denotes the barrier parameter. By gradually decreasing $\mu$, we obtain an approximate optimal solution of $(\operatorname{P})$.

The Karush–Kuhn–Tucker (KKT) conditions for \eqref{eq:barrier subproblem} are
\begin{equation}\label{eq:kkt condition}
    \begin{aligned}
    Ax &= b,\,x\in \intK,\\
    A^{\top} y +s &=c,\,s\in \interior(\K^*),\\
    \mu g_x+s&=0,
    \end{aligned}
\end{equation}
with dual variables $(y,s)$. The third equation in \eqref{eq:kkt condition} is the central-path relation associated with the barrier subproblem.

We then introduce the HSD embedding by adding $\tau,\,\kappa\geq 0$ and defining
\begin{equation*}\label{eq:extended_vars}
\bar{x} := (x; \tau) \in \bar{\mathbb{K}} := \mathbb{K} \times \mathbb{R}_+, \quad \bar{s} := (s; \kappa) \in \bar{\mathbb{K}}^* := \mathbb{K}^* \times \mathbb{R}_+,
\end{equation*}
together with the extended barrier
\begin{equation*}\label{eq:extended_barrier}
\bar{F}(\bar{x}) := F(x) - \log \tau, \quad \bar{\nu} := \nu + 1.
\end{equation*}
If $F$ is a $\nu$-LHSCB for $\mathbb{K}$, then $\bar{F}$ is a $\bar{\nu}$-LHSCB for $\bar{\mathbb{K}}$. Group the variables as $z := (\bar{x}; y; \bar{s}) \in \mathcal{F} := \bar{\mathbb{K}} \times \mathbb{R}^m \times \bar{\mathbb{K}}^*.$
With this notation, the linear equations of the HSD embedding can be written compactly as
\begin{equation}\label{eq:HSD_linear}
G(y; \bar{x}) - (0; \bar{s}) = 0, \quad G := \begin{pmatrix} 0 & A & -b \\ -A^\top & 0 & c \\ b^\top & -c^\top & 0 \end{pmatrix}.
\end{equation}

The skew-symmetry of $G$ will be used repeatedly to derive complementarity identities for the predictor and corrector directions. Furthermore, any solution of the homogeneous system satisfies  $\langle x,s\rangle+\tau\kappa=0.$ If $\tau>0$, then $x/\tau$ recovers a primal optimal solution of \eqref{eq:coneopt}, while $(y/\tau,s/\tau)$ recovers a dual optimal solution. If $\tau=0$ and $\kappa>0$, then $b^\top y>0$ gives a certificate of
primal infeasibility, while $c^\top x<0$ gives a certificate of dual
infeasibility~\cite[Theorems 9.5 and 9.6]{wright1997primal}.

The complementarity residual $\psi( x, s,\mu)$, the cone complementarity measure $\mu$, and the homogeneous complementarity measure $\bar \mu$ are defined as
\[
\begin{aligned}
\psi( x, s,\mu):= s+\mu g_{{x}},
\quad
\mu:=\frac{\langle x, s\rangle}{\nu},
\quad
\bar{\mu}:=\frac{\langle\bar{x}, \bar{s}\rangle}{\bar{\nu}}.
\end{aligned}
\]
Here,  $\psi(x, s, \mu)$ measures the deviation from the central path, $\mu$ is the average complementarity for the conic variables, and $\bar{\mu}$ is the corresponding average complementarity for the homogeneous variables.

 We define the following neighborhood of the central path:
\begin{equation}\label{eq:neighborhood}
\mathcal N(\eta,\beta_l,\beta_u):=\Bigl\{z\in\interior(\mathcal{F}):\ \|\psi( x, s,\mu)\|_{ x}^{*}\le \eta\,\mu,\,\beta_l {\mu}\leq\kappa\tau\leq\beta_u {\mu}\Bigr\},
\end{equation}
where $\eta\in(0,1),\,0<\beta_l\leq\beta_u$. We call $\mathcal N(\eta,\beta_l,\beta_u)$ a split central-path neighborhood for the HSD embedding. The proximity condition $\|\psi( x, s,\mu)\|_{ x}^{*}\le \eta\,\mu$ keeps the conic variables close to the central path, while the bounds $\beta_l {\mu}\leq\kappa\tau\leq\beta_u {\mu}$ control the scalar homogeneous complementarity and keep it comparable to the cone complementarity. This split form is adopted because the scaling matrix constructed in this paper acts only on the conic variables $x$ and $s$. It is therefore naturally associated with the cone complementarity measure $\mu$, rather than with the homogeneous complementarity measure $\bar\mu$. Nevertheless, the two complementarity measures are uniformly equivalent throughout $\mathcal N(\eta,\beta_l,\beta_u)$. Indeed, since
\[
    \bar\mu
    =
    \frac{\langle \bar x,\bar s\rangle}{\bar\nu}
    =
    \frac{\langle x,s\rangle+\tau\kappa}{\nu+1}
    =
    \frac{\nu\mu+\tau\kappa}{\nu+1},
\]
and since $\beta_l\mu\le \tau\kappa\le \beta_u\mu$, we have
\[
    \frac{\nu+\beta_l}{\nu+1}\,\mu
    \le
    \bar\mu
    \le
    \frac{\nu+\beta_u}{\nu+1}\,\mu \quad\text{and}\quad  \frac{\nu+1}{\nu+\beta_u}\,\bar\mu
    \le
    \mu
    \le
    \frac{\nu+1}{\nu+\beta_l}\,\bar\mu .
\]
Hence, $\mu$ and $\bar\mu$ are bounded above and below by positive constant multiples of each other, where the constants depend only on $\beta_l$ and $\beta_u$. Consequently, the two measures are of the same asymptotic order, and driving either one to zero also drives the other to zero.

\subsection{Initialization}
\label{subsec:initialization}

We next specify an initial point in $\mathcal N(\eta,\beta_l,\beta_u)$. Assume that an interior point $x^{(0)}\in\intK$ is available, and that the barrier $F$, its gradient, and its Hessian can be evaluated at this point. For the cone families considered in \Cref{sec5}, such a point can be obtained explicitly. 

Choose any $y^{(0)}\in\mathbb R^m$, for example, $y^{(0)}=0$, and choose a scalar $\beta_0\in[\beta_l,\beta_u].$ Define
\begin{equation}
    s^{(0)}=-g_{x^{(0)}},\quad
    \tau^{(0)}=1,\quad
    \kappa^{(0)}=\beta_0,
\end{equation}
and set
\begin{equation}
    \bar x^{(0)}=(x^{(0)};\tau^{(0)}),\quad
    \bar s^{(0)}=(s^{(0)};\kappa^{(0)}),\quad
    z^{(0)}=(\bar x^{(0)};y^{(0)};\bar s^{(0)}).
\end{equation}
Then $s^{(0)}\in\operatorname{int}(\K^*)$ by Theorem~\ref{thm:dual interior}(i), and hence $z^{(0)}\in\operatorname{int}(\mathcal F)$. In addition, by Lemma~\ref{lem:loghom-id},
\[
    \mu^{(0)}
    =
    \frac{\langle x^{(0)},s^{(0)}\rangle}{\nu}
    =
    \frac{\langle x^{(0)},-g_{x^{(0)}}\rangle}{\nu}
    =
    1.
\]
Therefore,
$
    \psi(x^{(0)},s^{(0)},\mu^{(0)})
    =
    s^{(0)}+\mu^{(0)} g_{x^{(0)}}
    =
    0
$
and
$
    \tau^{(0)}\kappa^{(0)}
    =
    \beta_0
    =
    \beta_0\mu^{(0)}.
$
Since $\beta_0\in[\beta_l,\beta_u]$, we obtain
$
    \beta_l\mu^{(0)}
    \le
    \tau^{(0)}\kappa^{(0)}
    \le
    \beta_u\mu^{(0)}.
$
Thus,
$
    z^{(0)}\in\mathcal N(\eta,\beta_l,\beta_u).
$

The initial point need not satisfy the linear equations of the HSD embedding. Its linear residual
$
    r(z^{(0)}):=G(y^{(0)};\bar x^{(0)})-(0;\bar s^{(0)})
$
may be nonzero, and this infeasibility is reduced by the predictor step. Indeed, Lemma~\ref{lem:feas_reduction} shows that the linear residual is contracted by a factor at each predictor step.
\subsection{Conjugate-free scaling}
We construct the scaling matrix used in the Newton systems for nonsymmetric conic optimization. The construction avoids conjugate-barrier derivatives, which may be unavailable in closed form or expensive to compute.

In the symmetric-cone setting, suppose that $F$ is a self-scaled barrier for $\K$. For every primal--dual pair $(x,s)\in\operatorname{int}(\K)\times\operatorname{int}(\K^*)$, there exists a unique Nesterov--Todd (NT) scaling point $w\in\operatorname{int}(\K)$ \cite{nesterov1997self}. This scaling point satisfies 
\begin{equation}\label{NT algebraic conditions}
    H_w x=s,\quad H_w g_s^*=g_x.
\end{equation}
Thus $H_w$ maps the primal variable $x$ to the dual variable $s$, and maps the conjugate-barrier gradient at $s$ to the primal barrier gradient at $x$. Since $F$ is logarithmically homogeneous, its Hessian satisfies $H_{\alpha w}=\frac{1}{\alpha^2} H_w$ for $\alpha>0$. Therefore, with the normalized scaling point $\widehat w:=\sqrt{\mu}\,w$, we have $H_{\widehat w}=\frac{1}{\mu} H_w$. Consequently, \eqref{NT algebraic conditions} can be equivalently written as
\begin{equation}\label{NT normalized algebraic conditions}
    \mu H_{\widehat w}x=s,\quad
    \mu H_{\widehat w}g_s^*=g_x .
\end{equation}
For a nonsymmetric cone, a self-scaled barrier and the corresponding NT scaling point are generally unavailable. Nevertheless, one may still construct a positive definite matrix $W$ by requiring it to satisfy the following two secant equations
\begin{equation}\label{secant conditions}
\mu Wx=s,\quad \mu W g^*_s=g_x.
\end{equation}
The first equation maps the primal variable $x$ to the dual variable
$s$, whereas the second equation involves the conjugate-barrier derivative.  In contrast to the NT scaling on symmetric cones, these equations do not determine $W$ through a cone automorphism or a unique scaling point. Existing primal--dual scaling constructions seek a positive definite matrix that satisfies these equations and remains sufficiently close to the primal barrier Hessian for the Newton analysis.

The exponential cone provides an important example of this approach. For this cone, Dahl and Andersen \cite{dahl2022primal} use $H_x$ as the reference matrix and compute the scaling matrix $W$ by solving a least-change problem
\[
\begin{aligned}
\min\, \left\{\| \Omega^{1/2} (W^{-1} - H_x^{-1}) \Omega^{1/2} \|_F\, :\, W^{-1} s = \mu x,\,W^{-1} g_x  = \mu g^*_s,\,W \succ 0\right\},
\end{aligned}
\]
where $\Omega \succ 0$ is chosen so that the two secant equations in \eqref{secant conditions}  can be satisfied simultaneously. The corresponding update is the multi-secant BFGS scaling, whose explicit form can be written as
\[
\begin{aligned}
W
&=H_x + \frac{s s^\top}{\nu\mu^2} + \frac{\big( s + \mu g_x \big) \big( s + \mu g_x \big)^\top}{\mu\langle x + \mu g^*_s, s + \mu g_x \rangle} - \frac{g_x  g_x^\top}{\nu} - \frac{H_x \varrho (H_x \varrho)^\top}{\langle \varrho, H_x \varrho \rangle},
\end{aligned}
\]
with $\varrho = -g^*_s - \frac{\langle g_x, g^*_s \rangle}{\nu} x.
$

For the three-dimensional exponential cone, this construction is especially effective, since the corresponding scalings have only one remaining degree of freedom and an explicit matrix $W$ can be computed efficiently.  For general high-dimensional nonsymmetric cones, the evaluation of $g_s^*$ itself may become a substantial computational bottleneck \cite{kapelevich2024computing}, and therefore the above approach is difficult to extend beyond a few special cases. Motivated by this observation, we retain only the first secant equation $\mu W x=s$ and remove the second one. Specifically, starting again from $H_x$, we obtain $W$ by solving the following simplified least-change problem
\[
\begin{aligned}
\min\, \left\{\| \Omega^{1/2} (W^{-1} - H_x^{-1}) \Omega^{1/2} \|_F\,:\,W^{-1} s = \mu x,\,W \succ 0\right\},
\end{aligned}
\]
where $\Omega \succ 0$ satisfies $\mu\Omega x =s$. The associated single-secant BFGS update for $H_x$ is then
\[
W=H_x + \frac{s s^\top}{\nu\mu^2}- \frac{g_x  g_x^\top}{\nu}.
\]
The resulting matrix satisfies the secant equation $\mu W x=s$ without requiring the evaluation of $g_s^*$. This construction is therefore useful for high-dimensional nonsymmetric cones, where conjugate-barrier evaluations may be costly.

To analyze the scaled Newton system, we use the following auxiliary estimate in a relative matrix norm.
\begin{lemma}[\protect{\cite[Lemma 7.1]{myklebust2014interior}}]\label{lem:matrix_norm_bound}
Let $M\in \mathbb{S}^n_{++}$, $u$ and $v \in \mathbb{R}^n$. Then 
\[
\|uu^\top-vv^\top\|_M\leq \|u+v\|_M^*\|u-v\|_M^*.
\]
\end{lemma}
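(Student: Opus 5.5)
We prove the inequality $\|uu^\top-vv^\top\|_M\le\|u+v\|_M^*\|u-v\|_M^*$ by reducing to the Euclidean case $M=I$ and then using a symmetric/skew decomposition of a rank-two matrix.

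\emph{Reduction to $M=I$.} By definition $\|B\|_M=\|M^{-1/2}BM^{-1/2}\|_2$. Set $\tilde u:=M^{-1/2}u$ and $\tilde v:=M^{-1/2}v$. Then
\[
M^{-1/2}(uu^\top-vv^\top)M^{-1/2}=\tilde u\tilde u^\top-\tilde v\tilde v^\top .
\]
Moreover, $\|u\pm v\|_M^*=\langle u\pm v,M^{-1}(u\pm v)\rangle^{1/2}=\|\tilde u\pm\tilde v\|$. It therefore suffices to show $\|\tilde u\tilde u^\top-\tilde v\tilde v^\top\|_2\le\|\tilde u+\tilde v\|\,\|\tilde u-\tilde v\|$ in the Euclidean norm.

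\emph{Key identity.} Put $p:=\tilde u+\tilde v$ and $q:=\tilde u-\tilde v$. A direct expansion gives
\[
\tilde u\tilde u^\top-\tilde v\tilde v^\top=\tfrac12\left(pq^\top+qp^\top\right).
\]
This matrix is symmetric with rank at most two, and it vanishes on the orthogonal complement of $\operatorname{span}\{p,q\}$. Its eigenvalues are therefore $0$ together with the eigenvalues of its restriction to that span.

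\emph{Computing the eigenvalues.} For the symmetric rank-two matrix $\tfrac12(pq^\top+qp^\top)$, the trace is $\langle p,q\rangle$. The nonzero-eigenvalue product equals
\[
\tfrac14\bigl(\langle p,q\rangle^2-\|p\|^2\|q\|^2\bigr)\le 0.
\]
Hence the two eigenvalues are
\[
\lambda_\pm=\tfrac12\bigl(\langle p,q\rangle\pm\|p\|\,\|q\|\bigr).
\]
One can verify this by testing the matrix on $p/\|p\|\pm q/\|q\|$, which shows these vectors are eigenvectors with exactly these eigenvalues. Since $|\langle p,q\rangle|\le\|p\|\|q\|$, both satisfy $|\lambda_\pm|\le\|p\|\,\|q\|$, which is the claim.

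\emph{Degenerate cases and the main obstacle.} If $p=0$ or $q=0$, both sides vanish. The only step needing care is the eigenvalue computation. An alternative route gives the same bound directly: for any unit vector $w$,
\[
|w^\top(\tfrac12(pq^\top+qp^\top))w|=|\langle w,p\rangle\langle w,q\rangle|\le\|p\|\|q\|.
\]
Because the matrix is symmetric, its spectral norm equals the maximum of this quadratic form over unit $w$, so this one-line estimate is the argument I would actually use.
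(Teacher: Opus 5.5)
Your proof is correct. The paper gives no argument for this lemma; it imports it directly from Myklebust and Tun\c{c}el \cite[Lemma~7.1]{myklebust2014interior}. So your write-up is a self-contained verification rather than a variant of a proof in the text.

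The individual steps all check out:
\begin{itemize}
\item Your reduction via $\tilde u=M^{-1/2}u$, $\tilde v=M^{-1/2}v$ matches the paper's definition $\|B\|_M=\|M^{-1/2}BM^{-1/2}\|_2$.
\item The identity $\tilde u\tilde u^\top-\tilde v\tilde v^\top=\frac12(pq^\top+qp^\top)$ is right.
\item The closing Rayleigh-quotient estimate $|w^\top Bw|=|\langle w,p\rangle\langle w,q\rangle|\le\|p\|\,\|q\|$, for symmetric $B$, finishes the proof on its own. The triangle inequality applied to the two rank-one terms $\frac12 pq^\top$ and $\frac12 qp^\top$ would do so equally quickly.
\end{itemize}

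The eigenvalue computation is more than the lemma needs, but it gives something the cited inequality does not: the exact value
$\|uu^\top-vv^\top\|_M=\frac12\bigl(|\langle p,q\rangle|+\|p\|\,\|q\|\bigr)$.
This is strictly below the stated bound unless $p$ and $q$ are linearly dependent.

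This exact value has a concrete payoff in the paper's one use of the lemma, the proof of $(1-\delta)H_x\preceq W\preceq(1+\delta)H_x$. There, $\langle x,\psi\rangle=0$ makes $\psi$ and $g_x$ orthogonal in the $H_x^{-1}$ inner product. Your formula then gives
$\|W-H_x\|_x=\frac{1}{2\nu}\bigl(t^2+t\sqrt{t^2+4\nu}\bigr)$ with $t=\|\psi\|_x^*/\mu\le\eta$.
That is roughly half of the paper's $\delta=\eta^2+2\eta$, and it decreases in $\nu$. The cruder inequality is nonetheless all the paper's analysis requires.
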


We use this bound to estimate the relative norm of the rank-two update in the scaling matrix.
\begin{theorem}\label{Thm:Hessian bound}
For $z\in \mathcal{N}(\eta,\beta_l,\beta_u)$, let $W = H_x + \frac{ss^\top}{\nu\mu^2}-\frac{g_x g_x^\top}{\nu}$. If $0<\eta<\sqrt2-1$, then
\begin{equation}\label{eq:scaling_matrix_bound}
    (1-\delta)\,H_{{x}}\ \preceq\ W\ \preceq\ (1+\delta)\,H_{{x}},
\end{equation}
where $\delta:=\eta^2+2\eta<1$.
\end{theorem}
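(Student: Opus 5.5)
The plan is to write $W$ as $H_x$ plus a difference of two rank-one terms and bound that difference in the relative matrix norm $\|\cdot\|_{H_x}$ with Lemma~\ref{lem:matrix_norm_bound}. For a symmetric $B$, $\|B\|_{H_x}\le\delta$ means $-\delta I\preceq H_x^{-1/2}BH_x^{-1/2}\preceq\delta I$, which is the same as $-\delta H_x\preceq B\preceq \delta H_x$. So it suffices to show $\|W-H_x\|_{H_x}\le \eta^2+2\eta$ and then note that $\eta^2+2\eta<1$ exactly when $0<\eta<\sqrt2-1$.

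First I will choose the rank-one factors so that their difference is the proximity residual $\psi$. Set
\[
u:=\frac{s}{\mu\sqrt{\nu}},\qquad v:=-\frac{g_x}{\sqrt{\nu}}.
\]
Then $uu^\top-vv^\top=\frac{ss^\top}{\nu\mu^2}-\frac{g_xg_x^\top}{\nu}=W-H_x$. The sign of $v$ does not change $vv^\top$, and choosing it this way gives
\[
u-v=\frac{s+\mu g_x}{\mu\sqrt\nu}=\frac{\psi(x,s,\mu)}{\mu\sqrt\nu},
\qquad
u+v=\frac{s-\mu g_x}{\mu\sqrt\nu}=\frac{\psi(x,s,\mu)-2\mu g_x}{\mu\sqrt\nu}.
\]
Lemma~\ref{lem:matrix_norm_bound} with $M=H_x$ then gives $\|W-H_x\|_{H_x}\le \|u+v\|_x^*\,\|u-v\|_x^*$.

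Next I bound each factor using the neighborhood condition $\|\psi\|_x^*\le\eta\mu$ together with $\|g_x\|_x^*=\sqrt\nu$ from Lemma~\ref{lem:loghom-id}. This gives $\|u-v\|_x^*\le \eta/\sqrt\nu$. By the triangle inequality, $\|u+v\|_x^*\le(\eta\mu+2\mu\sqrt\nu)/(\mu\sqrt\nu)=2+\eta/\sqrt\nu$. Multiplying and using $\nu\ge1$,
\[
\|W-H_x\|_{H_x}\le \frac{\eta}{\sqrt\nu}\Bigl(2+\frac{\eta}{\sqrt\nu}\Bigr)\le 2\eta+\eta^2=\delta.
\]
Translating this through $H_x^{-1/2}(\cdot)H_x^{-1/2}$ yields \eqref{eq:scaling_matrix_bound}.

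The one step that needs care is choosing the sign of $v$ so that $u-v$ equals exactly the scaled residual $\psi$. With the opposite sign, both factors would be of order one and no small bound would follow. Everything else follows directly from the earlier lemmas. In particular, the bound $\beta_l\mu\le\kappa\tau\le\beta_u\mu$ on the homogeneous variables is not needed here, because only the proximity part of $\mathcal N(\eta,\beta_l,\beta_u)$ enters.
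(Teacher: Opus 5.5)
Your proof is correct and is essentially the paper's argument: apply Lemma~\ref{lem:matrix_norm_bound} with $M=H_x$ to $W-H_x$, bound the factor $s+\mu g_x=\psi(x,s,\mu)$ by $\eta\mu$ and the factor $s-\mu g_x$ by $(\eta+2\sqrt\nu)\mu$, and finish with $\nu\ge1$. One small correction to your closing remark: flipping the sign of $v$ only swaps $u+v$ and $u-v$, so the product bound in Lemma~\ref{lem:matrix_norm_bound} is unchanged and the sign choice needs no special care.
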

\begin{proof}
Using Lemma \ref{lem:matrix_norm_bound} with $M=H_x$ and the proximity condition $\norm{s+\mu g_x}_x^*=\norm{\psi(x,s,\mu)}_x^*\le \eta\mu$, together with the barrier identity $\norm{g_x}_x^*=\sqrt{\nu}$ for a $\nu$-LHSCB, we obtain
\begin{equation*}\label{eq:scaling_proof}
\begin{aligned}
\norm{W - H_x}_x &= \left\|\frac{s s^\top}{\nu \mu^2} - \frac{g_x g_x^\top}{\nu}\right\|_x 
\leq \frac{\norm{s + \mu g_x}_x^* \norm{s - \mu g_x}_x^*}{\nu \mu^2} \\
&\leq \frac{\eta(\norm{\psi}_x^* + 2\mu \norm{g_x}_x^*)}{\nu \mu} \leq \frac{\eta(\eta + 2\sqrt{{\nu}})}{\nu} \leq \eta^2 + 2\eta = \delta,
\end{aligned}
\end{equation*}
where the last inequality uses $\nu\ge 1$. The bound \eqref{eq:scaling_matrix_bound} follows from the definition of the matrix norm induced by $H_x$.
\end{proof}

\subsection{Predictor--corrector algorithm}
We now present a predictor--corrector interior-point algorithm based on the conjugate-free scaling. Each iteration consists of a predictor step and a corrector step. The predictor step reduces the linear residual and decreases the homogeneous complementarity measure. However, it may move the iterate farther from the central path, and the corrector step is then used to restore centrality. 

\subsubsection{Predictor Step}
Given $z \in \mathcal{N}(\eta, \beta_l, \beta_u)$, define the predictor direction $\Delta z_p = (\Delta \bar{x}_p; \Delta y_p; \Delta \bar{s}_p)$ as the solution to the linear system
\begin{subequations}\label{eq:PredSystemW}
	\begin{align}
		G(\dy_p;\dx_p)-(0;\ds_p) &= -r(z), \label{eq:PredSystemW-a}\\
		\tau\Delta\kappa_p+	\kappa\Delta\tau_p&=-\kappa\tau,\label{eq:PredSystemW-c}\\
		\Delta s_p + \mu W\, \Delta x_p &= - s, \label{eq:PredSystemW-b}
	\end{align}
\end{subequations}
where $r(z) := G(y; \bar{x}) - (0; \bar{s})$ is the linear residual. The predictor step aims to reduce both the linear residual $r(z)$ and the
homogeneous complementarity measure $\bar\mu$. We set the step length $\alpha_p \in (0, 1]$ to ensure that $z^+ = z + \alpha_p \Delta z_p$ remains in $\mathcal{N}(\eta^+, \beta_l^+, \beta_u^+)$.

\subsubsection{Corrector Step}
Given $z^+ \in \mathcal{N}(\eta^+, \beta_l^+, \beta_u^+)$, define the corrector direction $\Delta z_c = (\Delta \bar{x}_c; \Delta y_c; \Delta \bar{s}_c)$ by solving
\begin{subequations}\label{eq:CorrSystemW}
	\begin{align}
		G(\dy_c;\dx_c)-(0;\ds_c) &= 0, \label{eq:CorrSystemW-a}\\
			{\tau^+\Delta\kappa_c+	\kappa^+\Delta\tau_c}&=\sigma\mu^+
			-\kappa^+\tau^+,\label{eq:CorrSystemW b}\\
		\Delta s_c +\mu^+ {W^+}\,\Delta x_c &= -\psi(x^+, s^+,\mu^+).
		\label{eq:CorrSystemW-b}
	\end{align}
\end{subequations}
Here, $\sigma \in [0, 1]$ is a centering parameter, $\mu^+=\frac{\langle x^+,s^+\rangle}{\nu}$ denotes the updated cone complementarity measure obtained after the predictor step, and the scaling matrix used in \eqref{eq:CorrSystemW} is recomputed as
\begin{equation*}\label{eq:update_w_pl}
W^+=H_{x^+}+\frac{s^+(s^+)^\top}{\nu(\mu^+)^2}-\frac{g_{x^+}g_{x^+}^\top}{\nu}.
\end{equation*}
The corrector step recenters the iterate toward the central path, reducing the complementarity residual $\psi(x^+, s^+, \mu^+)$ and ensuring that the next iterate $z^{++} = z^+ + \alpha_c \Delta z_c$ stays within $\mathcal N(\eta,\beta_l,\beta_u)$. We take one corrector step with a fixed step length $\alpha_c \in (0,1]$ to refine the iterate.

Algorithm~\ref{alg:pd_ipm} presents the resulting predictor--corrector interior-point method.
\begin{algorithm}[H]
\caption{The predictor--corrector interior-point method}
\label{alg:pd_ipm}
\begin{algorithmic}[1]
\STATE \textbf{Preprocessing:}
Express the problem in the standard conic form \eqref{eq:coneopt}.
Choose an initial point $z^{(0)}=(\bar x^{(0)};y^{(0)};\bar s^{(0)})\in
\mathcal N(\eta,\beta_l,\beta_u)$ as described in
Section~\ref{subsec:initialization}. Set $z=z^{(0)}$.
\FOR{$k = 0, 1, 2, \ldots $}
    \IF{a convergence criterion is satisfied}
        \RETURN $(\bar{x},y,\bar{s})$
    \ENDIF
    \STATE \textbf{Predictor step:}
    \STATE Solve the system \eqref{eq:PredSystemW} to obtain $\Delta z_p=\left(\dx_p;\dy_p;\ds_p\right)$.
    \STATE Set the step length $\alpha_{p}$ and update $z^+=z+\alpha_p\Delta z_p$.
    \STATE \textbf{Corrector step:}
    \STATE Solve the system \eqref{eq:CorrSystemW} to obtain $\Delta z_c=\left(\dx_c;\dy_c;\ds_c\right)$.
    \STATE Set the step length $\alpha_{c}$ and update $z^{++}=z^{+}+\alpha_c\Delta z_c$.
    \STATE Set $z\leftarrow z^{++}$.
\ENDFOR
\end{algorithmic}
\end{algorithm}

The convergence analysis of this algorithm, including the precise choice of parameters $(\eta,\beta_l,\beta_u,\alpha_p,\alpha_c,\sigma)$ and the resulting iteration complexity bound $\mathcal{O}\left(\sqrt{\nu}\log(1/ \varepsilon)\right)$, is provided in Section~\ref{sec4}.

\section{Complexity analysis}\label{sec4}
This section establishes the global iteration complexity of the proposed primal--dual interior-point method for nonsymmetric conic optimization.
We proceed in three stages: first, we analyze the properties of the predictor step; second, we examine the corrector step; and finally, we prove neighborhood preservation for one predictor--corrector iteration, from which the final complexity estimate follows.

\subsection{Properties of the predictor step}
We begin by establishing norm bounds for $s$ within the neighborhood $\mathcal N(\eta,\beta_l,\beta_u)$.
\begin{lemma}\label{lem:s-size}
	Let $z\in\mathcal N(\eta,\beta_l,\beta_u)$. Then
	\begin{align}\label{eq:s-size}
		\| s\|_{ x}^*
		\le (\sqrt{\nu}+\eta)\,\mu \quad\text{and}\quad
		\| s\|_{W}^*
		\le \frac{(\sqrt{\nu}+\eta)}{\sqrt{1-\delta}}\,\mu.
	\end{align}
\end{lemma}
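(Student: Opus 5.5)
The plan is to obtain the first bound from the triangle inequality in the dual local norm, and the second from the first by comparing $W$ with $H_x$ through Theorem~\ref{Thm:Hessian bound}.

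\textbf{First bound.} Write $s=\psi(x,s,\mu)-\mu g_x$, where $\psi(x,s,\mu)=s+\mu g_x$. The triangle inequality for $\|\cdot\|_x^*$ gives
\[
\|s\|_x^*\le \|\psi(x,s,\mu)\|_x^*+\mu\|g_x\|_x^*.
\]
Membership $z\in\mathcal N(\eta,\beta_l,\beta_u)$ gives $\|\psi(x,s,\mu)\|_x^*\le\eta\mu$. Lemma~\ref{lem:loghom-id} gives $\|g_x\|_x^*=\sqrt\nu$. Together these yield $\|s\|_x^*\le(\sqrt\nu+\eta)\mu$.

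\textbf{Second bound.} Since $z\in\mathcal N(\eta,\beta_l,\beta_u)$, I will use Theorem~\ref{Thm:Hessian bound}, assuming implicitly its standing hypothesis $0<\eta<\sqrt2-1$, so that $\delta<1$. That theorem gives $W\succeq(1-\delta)H_x\succ0$. Inversion is order-reversing on positive definite matrices, so
\[
W^{-1}\preceq\frac{1}{1-\delta}H_x^{-1}.
\]
Hence
\[
(\|s\|_W^*)^2=\langle s,W^{-1}s\rangle\le\frac{1}{1-\delta}\langle s,H_x^{-1}s\rangle=\frac{(\|s\|_x^*)^2}{1-\delta}.
\]
Taking square roots and inserting the first bound gives the claim.

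\textbf{Main obstacle.} There is no real obstacle. The only point needing care is the operator-monotonicity of inversion, $A\succeq B\succ0\Rightarrow B^{-1}\succeq A^{-1}$. This follows by congruence with $B^{-1/2}$, which reduces it to the fact that $C\succeq I$ implies $C^{-1}\preceq I$. One must also note that $\delta<1$, so that $(1-\delta)H_x$ is positive definite; this holds under the hypothesis on $\eta$.
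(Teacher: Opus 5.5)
Your proposal is correct and follows the paper's proof: the first bound comes from the triangle inequality applied to $s=\psi(x,s,\mu)-\mu g_x$, and the second from Theorem~\ref{Thm:Hessian bound} via $W^{-1}\preceq(1-\delta)^{-1}H_x^{-1}$. You also rightly point out that the hypothesis $0<\eta<\sqrt{2}-1$ of that theorem is needed here, even though the lemma statement leaves it implicit.
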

\begin{proof}
	Write $ s=-\mu g_{ x}+\psi( x, s,\mu)$. Combining \eqref{eq:neighborhood} with Lemma~\ref{lem:loghom-id} gives
	\[
	\| s\|_{ x}^*
	\le \mu\|g_{ x}\|_{ x}^*+\|\psi\|_{ x}^*
	\le \mu\sqrt{\nu}+\eta\mu.
	\]
	The second inequality of \eqref{eq:s-size} follows from \Cref{Thm:Hessian bound}.
\end{proof}

The next result demonstrates that the predictor step contracts the linear residual by the factor $1-\alpha_p$, a property that will be used to control feasibility.
\begin{lemma}\label{lem:feas_reduction}
Let $z\in\mathcal N(\eta,\beta_l,\beta_u)$ and
$\Delta z_p=(\Delta\bar x_p;\Delta y_p;\Delta\bar s_p)$ be the solution of \eqref{eq:PredSystemW}. Set
$
   z^{+}:=z + \alpha_p\Delta z_p
$
for some $\alpha_p\in(0,1]$. Then
$r(z^+) = (1 - \alpha_p) r(z).$
\end{lemma}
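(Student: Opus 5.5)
The claim follows from the linearity of the residual map together with the first block equation \eqref{eq:PredSystemW-a} of the predictor system. The map $z\mapsto r(z)=G(y;\bar x)-(0;\bar s)$ is affine in $z$; in fact it is linear, since it has no constant term. For any direction $\Delta z=(\Delta\bar x;\Delta y;\Delta\bar s)$ we therefore have
\[
r(z+\alpha\Delta z)=r(z)+\alpha\bigl(G(\Delta y;\Delta\bar x)-(0;\Delta\bar s)\bigr).
\]
This identity is the first step, and it requires only expanding the definition of $r$ and using linearity of $G$.

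The second step substitutes $\Delta z=\Delta z_p$ and $\alpha=\alpha_p$. By \eqref{eq:PredSystemW-a}, the bracketed term equals $-r(z)$. This gives $r(z^+)=r(z)-\alpha_p r(z)=(1-\alpha_p)r(z)$, which is the claim.

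The equations \eqref{eq:PredSystemW-c} and \eqref{eq:PredSystemW-b}, the choice of $W$, and the neighborhood membership of $z$ are not needed for the identity itself. They matter only for ensuring that the system \eqref{eq:PredSystemW} has a solution, which the statement already assumes. No step is a genuine obstacle. The only point to check is bookkeeping: the ordering $(y;\bar x)$ versus $(\bar x;y)$ inside $G$ must be applied consistently to both $z$ and $\Delta z_p$, so that the same linear operator appears in both places.
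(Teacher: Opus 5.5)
Your proposal is correct and follows the paper's proof exactly: expand the affine (indeed linear) residual map along $\Delta z_p$ and substitute \eqref{eq:PredSystemW-a} to get $r(z^+)=(1-\alpha_p)r(z)$. Your remark that the remaining equations, the choice of $W$, and neighborhood membership play no role in this identity is also accurate.
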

\begin{proof}
The mapping $r(z)=G(y;\bar x)-(0;\bar s)$ is affine in $z$. Hence
\[
r(z+\alpha_p\dz_p)=r(z)+\alpha_p\bigl(G(\dy_p;\dx_p)-(0;\ds_p)\bigr).
\]
Using \eqref{eq:PredSystemW-a}, $G(\dy_p;\dx_p)-(0;\ds_p)=-r(z)$, which yields the claim.
\end{proof}

We now bound the predictor direction in various norms.
\begin{lemma}\label{eq:lemma bound of xs}
For $z \in \mathcal N(\eta,\beta_l,\beta_u)$, let $\dz_p$ solve \eqref{eq:PredSystemW}. Then:
\begin{enumerate}
    \item[(i)]$\langle\dx_p,\ds_p\rangle=0$;
    \item[(ii)] $
    \|\dxx_p\|_{{W}}\leq \gamma,\, \|\dss_p\|_{{W}}^*\leq \gamma \mu,
$
 where $\gamma:=
\sqrt{\frac{\left(\eta+\sqrt{\nu}\right)^2}{1-\delta}+\frac{\beta_u}{2}}$. Furthermore, $
    \|\dxx_p\|_{{x}}\leq \frac{\gamma}{\sqrt{1-\delta}},\, \|\dss_p\|_{{x}}^*\leq \sqrt{1+\delta}\,\gamma\mu.
$
\end{enumerate}
\end{lemma}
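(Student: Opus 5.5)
The plan is to derive (i) from the fact that a full predictor step lands exactly on the homogeneous system, and then obtain (ii) from a weighted-norm identity for \eqref{eq:PredSystemW-b} together with a separate treatment of the scalar pair $(\tau,\kappa)$.

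\textbf{Part (i).} I would not expand the skew-symmetry directly, because $r(z)\neq0$. Instead, note that $\hat z:=z+\Delta z_p$ satisfies $r(\hat z)=0$ by Lemma~\ref{lem:feas_reduction} with $\alpha_p=1$. Skew-symmetry of $G$ then gives $\langle\bar x+\dx_p,\bar s+\ds_p\rangle=\langle (y+\dy_p;\bar x+\dx_p),G(y+\dy_p;\bar x+\dx_p)\rangle=0$. Expanding this inner product yields
\[
\langle\bar x,\bar s\rangle+\langle\bar x,\ds_p\rangle+\langle\dx_p,\bar s\rangle+\langle\dx_p,\ds_p\rangle=0 .
\]
Next, take the inner product of \eqref{eq:PredSystemW-b} with $x$ and use the symmetry of $W$ together with the secant equation $\mu Wx=s$. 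This gives $\langle x,\dss_p\rangle+\langle s,\dxx_p\rangle=-\langle x,s\rangle$. Adding \eqref{eq:PredSystemW-c} gives $\langle\bar x,\ds_p\rangle+\langle\dx_p,\bar s\rangle=-\langle\bar x,\bar s\rangle$. Substituting this into the expansion gives $\langle\dx_p,\ds_p\rangle=0$.

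\textbf{Part (ii).} Apply $(\mu W)^{-1/2}$ to \eqref{eq:PredSystemW-b} and take squared Euclidean norms. This gives
\[
\mu\|\dxx_p\|_W^2+\tfrac1\mu(\|\dss_p\|_W^*)^2+2\langle\dxx_p,\dss_p\rangle=\tfrac1\mu(\|s\|_W^*)^2 .
\]
By part (i), $2\langle\dxx_p,\dss_p\rangle=-2\Delta\tau_p\Delta\kappa_p$.

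The key step is bounding this scalar cross term. Set $a=\tau\Delta\kappa_p$ and $b=\kappa\Delta\tau_p$, so that $a+b=-\tau\kappa$ by \eqref{eq:PredSystemW-c}. Then
\[
2\Delta\tau_p\Delta\kappa_p=\frac{2ab}{\tau\kappa}=\tau\kappa-\frac{a^2+b^2}{\tau\kappa}\le\tau\kappa-\frac{(a+b)^2}{2\tau\kappa}=\frac{\tau\kappa}{2}.
\]
Hence
\[
\mu\|\dxx_p\|_W^2+\tfrac1\mu(\|\dss_p\|_W^*)^2\le \tfrac1\mu(\|s\|_W^*)^2+\tfrac{\tau\kappa}{2}.
\]
Using Lemma~\ref{lem:s-size} and $\tau\kappa\le\beta_u\mu$, the right-hand side is at most $\mu\gamma^2$. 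Both terms on the left are nonnegative, so each is bounded by $\mu\gamma^2$, which gives $\|\dxx_p\|_W\le\gamma$ and $\|\dss_p\|_W^*\le\gamma\mu$.

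\textbf{Conversion to local norms.} By Theorem~\ref{Thm:Hessian bound}, $H_x\preceq W/(1-\delta)$ and $W^{-1}\preceq H_x^{-1}/(1-\delta)$, and also $H_x^{-1}\preceq(1+\delta)W^{-1}$. The first inequality gives $\|\dxx_p\|_x\le\|\dxx_p\|_W/\sqrt{1-\delta}$. The last gives $\|\dss_p\|_x^*\le\sqrt{1+\delta}\,\|\dss_p\|_W^*$.

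\textbf{Main obstacle.} The delicate point is the sign of the scalar cross term. Obtaining the constant $\beta_u/2$ rather than $\beta_u$ relies on the bound $a^2+b^2\ge(a+b)^2/2$, so this is where I would check the algebra most carefully.
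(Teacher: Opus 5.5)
Your proposal is correct and follows essentially the same route as the paper. Part (i) uses skew-symmetry of $G$ at the full step $z+\Delta z_p$, combined with the secant relation $\mu Wx=s$ and \eqref{eq:PredSystemW-c}. Part (ii) uses the $W$-weighted norm identity for \eqref{eq:PredSystemW-b} with the cross term bounded by $\Delta\tau_p\Delta\kappa_p\le\tau\kappa/4$, followed by the conversion to local norms via Theorem~\ref{Thm:Hessian bound}. Your inequality $a^2+b^2\ge(a+b)^2/2$ is an equivalent form of the paper's maximization of the concave quadratic $-\tau\Delta\kappa_p-\frac{\tau}{\kappa}(\Delta\kappa_p)^2$. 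Scaling by $(\mu W)^{-1/2}$ instead of $W^{-1/2}$ only divides the identity by $\mu$.
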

\begin{proof}
    (i) From \eqref{eq:PredSystemW-a} and the linearity of $G$, we have $G(y+\dy_p;\bar{x}+\dx_p)-(0;\bar{s}+\ds_p) = 0$. Taking the inner product of this identity with $(y+\dy_p;\bar{x}+\dx_p)$ and using the skew-symmetry of $G$ gives
    $
        \langle \bar{x}+\dx_p,\bar{s}+\ds_p\rangle = 0,
    $
    which rearranges to 
     \begin{equation}\label{eq:dxds}
        \langle\dx_p,\ds_p\rangle=-\left(\langle \bar{s},\dx_p\rangle+\langle \bar{x},\ds_p\rangle+\langle \bar{s},\bar{x}\rangle\right).
    \end{equation}
    Substitute $\dss_p=-{s}-\mu W \dxx_p$ 
    into \eqref{eq:dxds}:
    \begin{equation*}\label{eq:dxds2}
        \begin{aligned}
        \langle\dx_p,\ds_p\rangle&=-\left(\langle {s},\dxx_p\rangle+\kappa\Delta\tau_p-\langle {x},{s}\rangle-\mu\langle {x},W\dxx_p\rangle+\tau\Delta\kappa_p +\langle \bar{s},\bar{x}\rangle\right)\\
        &=-\left(\langle {s},\dxx_p\rangle-\mu\langle W {x},\dxx_p\rangle-\kappa\tau-\langle {x},{s}\rangle+\langle \bar{s},\bar{x}\rangle\right)=0.
        \end{aligned}
    \end{equation*}

         (ii) From \eqref{eq:PredSystemW-c}, it follows that $\Delta\tau_p=-\tau-\frac{\tau}{\kappa}	\Delta\kappa_p$, and thus \begin{equation}\label{eq:bound of tkp}\Delta\tau_p\Delta\kappa_p=-\tau \Delta\kappa_p-\frac{\tau}{\kappa}	(\Delta\kappa_p)^2\leq \frac{\tau\kappa}{4}.\end{equation} By part (i), we have $\langle\dx_p,\ds_p\rangle=\langle\Delta x_p,\Delta s_p\rangle+\Delta\tau_p\Delta\kappa_p=0$, which implies that $\Delta\tau_p\Delta\kappa_p=-\langle\Delta x_p,\Delta s_p\rangle$.
         Multiplying \eqref{eq:PredSystemW-b} by $W^{-\frac{1}{2}}$, taking norms, and then squaring both sides yields
        \begin{equation*}
        \begin{aligned}
            (\|\dss_p\|_{W}^{*})^2 + \mu^2\|\dxx_p\|_{W}^2 &= (\|{s}\|_{W}^{*})^2-2\mu \, \langle\dxx_p, \dss_p\rangle 
            = (\|{s}\|_{W}^{*})^2+2\mu\Delta\tau_p\Delta\kappa_p\\
            &\leq (\|{s}\|_{W}^{*})^2+\frac{\tau\kappa\mu}{2}\leq (\|{s}\|_{W}^{*})^2+\frac{\beta_u {\mu}^2}{2}.
            \end{aligned}
        \end{equation*}
        This result, together with \Cref{lem:s-size}, implies that
        \begin{equation*}
            \begin{aligned}
                (\|\dxx_p\|_{W})^2&\leq(\frac{\|{s}\|_{W}^{*}}{\mu})^2+\frac{\beta_u }{2}\leq \frac{\left(\eta+\sqrt{\nu}\right)^2}{1-\delta}+\frac{\beta_u}{2},\\
                (\|\dss_p\|_{W}^{*})^2&\leq (\|{s}\|_{W}^{*})^2+\frac{\beta_u {\mu}^2}{2}\leq \left(\frac{\left(\eta+\sqrt{\nu}\right)^2}{1-\delta}+\frac{\beta_u}{2}\right)\mu^2.
            \end{aligned}
        \end{equation*}
        Thus, $ \|\dxx_p\|_{W} \leq \gamma$ and $\|\dss_p\|_{W}^{*}\leq \gamma\mu$.
        Using Theorem~\ref{Thm:Hessian bound}, we obtain
        \[
        \begin{aligned}
         \|\dxx_p\|_{{x}}&\leq\frac{1}{\sqrt{1-\delta}}\|\dxx_p\|_{W}\leq \frac{\gamma}{\sqrt{1-\delta}},\\
         \|\dss_p\|_{{x}}^*&\leq\sqrt{1+\delta}\|\dss_p\|_{W}^{*}\leq \sqrt{1+\delta}\,\gamma\mu.
        \end{aligned}
        \]
         This proves the lemma.
\end{proof}

Having established bounds on the predictor direction, we next quantify how the  complementarity measures evolve under a predictor step. 
\begin{theorem}\label{thm:gap_evolution}
For $ z \in \mathcal N(\eta,\beta_l,\beta_u) $, let $ z^{+} = z + \alpha_{p} \dz_{p}$. Then
\begin{subequations}
\begin{align}
\left(1-\alpha_p-\frac{\gamma^2\alpha_p^2}{\nu}\right)\mu&\leq   \mu^+\leq \left(1-\alpha_p+\frac{\gamma^2\alpha_p^2}{\nu}\right)\mu,\label{eq:bound of mup}\\
|\mu^+-\mu|&\leq\alpha_p\mu\left(1+\frac{\gamma^2\alpha_p}{\nu}\right),\\
 \bar{\mu}^+&=(1-\alpha_p)\bar{\mu}.\label{ineq:barmu p}
 \end{align}
\end{subequations}
\end{theorem}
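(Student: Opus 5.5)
The plan is to expand the cone and homogeneous complementarity products along the predictor direction. Everything reduces to computing the first-order terms exactly from the linearized complementarity equations, and bounding the second-order term with \Cref{eq:lemma bound of xs}.

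\textbf{Cone measure.} For $\mu^+$ we write
\[
\nu\mu^+=\langle x+\alpha_p\Delta x_p,\,s+\alpha_p\Delta s_p\rangle
=\langle x,s\rangle+\alpha_p\bigl(\langle s,\Delta x_p\rangle+\langle x,\Delta s_p\rangle\bigr)+\alpha_p^2\langle \Delta x_p,\Delta s_p\rangle .
\]
The linear term is handled with \eqref{eq:PredSystemW-b}:
\[
\langle x,\Delta s_p\rangle=-\langle x,s\rangle-\mu\langle Wx,\Delta x_p\rangle .
\]
The secant equation $\mu Wx=s$, which is built into $W$ and is immediate from $H_xx=-g_x$, $\langle g_x,x\rangle=-\nu$, and $\langle s,x\rangle=\nu\mu$, then gives
\[
\langle s,\Delta x_p\rangle+\langle x,\Delta s_p\rangle=-\langle x,s\rangle .
\]
Hence $\nu\mu^+=(1-\alpha_p)\nu\mu+\alpha_p^2\langle\Delta x_p,\Delta s_p\rangle$.

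The quadratic term is controlled by the generalized Cauchy--Schwarz inequality with $M=W$ together with \Cref{eq:lemma bound of xs}(ii):
\[
|\langle\Delta x_p,\Delta s_p\rangle|\le\|\Delta x_p\|_W\|\Delta s_p\|_W^*\le\gamma^2\mu .
\]
Dividing by $\nu$ yields \eqref{eq:bound of mup}. The second inequality follows directly from
\[
|\mu^+-\mu|=\Bigl|-\alpha_p\mu+\tfrac{\alpha_p^2}{\nu}\langle\Delta x_p,\Delta s_p\rangle\Bigr|
\]
and the triangle inequality.

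\textbf{Homogeneous measure.} For $\bar\mu^+$ we expand $\langle\bar x^+,\bar s^+\rangle$ in the same way. The quadratic term $\alpha_p^2\langle\Delta\bar x_p,\Delta\bar s_p\rangle$ vanishes by \Cref{eq:lemma bound of xs}(i). The linear term is
\[
\langle s,\Delta x_p\rangle+\langle x,\Delta s_p\rangle+\kappa\Delta\tau_p+\tau\Delta\kappa_p .
\]
By the computation above and \eqref{eq:PredSystemW-c}, this equals $-\langle x,s\rangle-\kappa\tau=-\langle\bar x,\bar s\rangle$. This gives $\langle\bar x^+,\bar s^+\rangle=(1-\alpha_p)\langle\bar x,\bar s\rangle$, which is \eqref{ineq:barmu p}.

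\textbf{Main obstacle.} There is no serious obstacle. The one point needing care is verifying the secant identity $\mu Wx=s$ from the explicit formula for $W$, since it is what makes the cross term collapse. The algebra is:
\[
H_xx=-g_x,\qquad \frac{ss^\top x}{\nu\mu^2}=\frac{s}{\mu},\qquad -\frac{g_xg_x^\top x}{\nu}=g_x .
\]
Summing these gives $Wx=s/\mu$.
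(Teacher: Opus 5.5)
Your proposal is correct and follows essentially the same route as the paper. Both expand the products, collapse the first-order term via $\Delta s_p=-s-\mu W\Delta x_p$, the symmetry of $W$, and the secant identity $\mu Wx=s$, bound $|\langle\Delta x_p,\Delta s_p\rangle|\le\gamma^2\mu$ by Cauchy--Schwarz in the $W$-norm with Lemma~\ref{eq:lemma bound of xs}(ii), and obtain $\bar\mu^+=(1-\alpha_p)\bar\mu$ from the $\tau\kappa$ linearization together with $\langle\Delta\bar x_p,\Delta\bar s_p\rangle=0$. Your only addition is the explicit check that $\mu Wx=s$ follows from the formula for $W$, which the paper takes from the construction of the scaling.
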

\begin{proof}
By definition, we have
$
\mu^{+}
=\frac1\nu\langle  x+\alpha_p\dxx_p,\  s+\alpha_p\dss_p\rangle.
$
Substituting $\dss_p=- s-\mu W\dxx_p$ gives 
\begin{equation}\label{eq:mu-expansion2}
\begin{aligned}
\mu^{+}
=&\,
(1-\alpha_p)\mu
+\frac{\alpha_p}{\nu}\Bigl(\langle \dxx_p, s\rangle-\langle  x,\mu W\dxx_p\rangle\Bigr)
+\frac{\alpha_p^{2}}{\nu}\langle\dxx_p,\dss_p\rangle \\
=&\,(1-\alpha_p)\mu
+\frac{\alpha_p}{\nu}\Bigl(\langle \dxx_p, s\rangle-\langle  \mu Wx,\dxx_p\rangle\Bigr)+\frac{\alpha_p^{2}}{\nu}\langle\dxx_p,\dss_p\rangle\\
 =&(1-\alpha_p)\mu+\frac{\alpha_p^{2}}{\nu}\langle\dxx_p,\dss_p\rangle.
\end{aligned}
\end{equation}
Here, the second equality uses the symmetry of $W$, while the last equality follows from the relation $s=\mu Wx$.
By the Cauchy--Schwarz inequality in the $W$-norm and Lemma~\ref{eq:lemma bound of xs}(ii), we have
\begin{equation*}
-\gamma^2\mu\leq-\|\dxx_p\|_W\|\dss_p\|_W^*\leq\langle\dxx_p,\dss_p\rangle\leq \|\dxx_p\|_W\|\dss_p\|_W^*\leq \gamma^2\mu.
\end{equation*}
Substituting this estimate into \eqref{eq:mu-expansion2} yields the two-sided bound
\begin{equation}\label{eq:aux equa 1}
  \left(1-\alpha_p-\frac{\gamma^2\alpha_p^2}{\nu}\right)\mu\leq   \mu^+\leq \left(1-\alpha_p+\frac{\gamma^2\alpha_p^2}{\nu}\right)\mu.
\end{equation}
In particular, it follows from \eqref{eq:aux equa 1} that
\begin{equation*}
    |\mu^+-\mu|\leq\alpha_p\mu\left(1+\frac{\gamma^2\alpha_p}{\nu}\right).
\end{equation*}
From the predictor equations \eqref{eq:PredSystemW-c}--\eqref{eq:PredSystemW-b} and $\mu W x=s$, we have
\begin{equation}\label{eq:first order sum}
\begin{aligned}
\,\langle \dx_p,\bar s\rangle+\langle \bar x,\ds_p \rangle
=&\,\langle \dxx_p, s\rangle+\langle  x,\dss_p\rangle + \kappa\Delta \tau+ \tau\Delta\kappa\\
=&\,\langle \dxx_p, s\rangle+\langle  x,-s-\mu W\dxx_p\rangle -\tau\kappa\\
=&\,-\langle x,s\rangle- \tau\kappa
=-\bar{\nu}\bar{\mu}.
\end{aligned}
\end{equation}
Therefore, using \eqref{eq:first order sum} and Lemma~\ref{eq:lemma bound of xs}(i), we derive
\begin{equation*}
\begin{aligned}
\bar{\mu}^+ &=\, \frac{\langle  \bar{x}+\alpha_p\dx_p,\  \bar{s}+\alpha_p\ds_p\rangle}{\bar{\nu}}\\
&=\,\frac{\langle\bar{x},\bar{s}\rangle+\alpha_p(\langle \dx_p,\bar s\rangle+\langle \bar x,\ds_p \rangle)+\alpha_p^2(\langle\dx_p,\ds_p\rangle)}{\bar{\nu}}\\
&=\,(1-\alpha_p)\bar{\mu}.
\end{aligned}
\end{equation*}
This proves the theorem.
\end{proof}

To ensure that the predictor point remains in the neighborhood $ \mathcal{N}(\eta^+,\beta_l^+,\beta_u^+)$, we must verify that the updated iterate satisfies the proximity condition with parameter $\eta^+$ as well as the scalar homogeneous complementarity bounds $\beta_l^+$ and $\beta_u^+$. The next two theorems provide these guarantees.
\begin{theorem}\label{thm:neighborhood of predictor}
For $z \in \mathcal{N}(\eta,\beta_l,\beta_u)$, let $\dz_p$ solve \eqref{eq:PredSystemW} and $z^+=z+\alpha_p \Delta z_p$ for some $\alpha_p>0$ such that $\omega:=\frac{\alpha_p\gamma}{\sqrt{1-\delta}}<1$ and $1-\omega-(1-\delta)\omega^2>0$. Then
\[
x^+\in \intK\quad\text{and} \quad
\|\psi( x^+, s^+,\mu^+)\|_{ x^+}^{*}\le \eta^+\mu^+,
\]
where $\eta^+:=\dfrac{\omega}{(1-\omega)^2}+ \frac{\eta+(1-\delta)(\omega+\omega^2)+\sqrt{1-\delta^2}\,\omega}{(1-\omega-(1-\delta)\omega^2)(1-\omega)}$.
Furthermore, if $\eta^+<1$, then $s^+\in \operatorname{int}(\K^*)$.
\end{theorem}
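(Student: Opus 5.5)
The plan is to treat the primal interiority, the proximity estimate and the dual interiority separately. Everything is measured first in the local norm at $x$ and transferred to $x^+$ only at the end.

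\textbf{Primal interiority.} By Lemma~\ref{eq:lemma bound of xs}(ii),
\[
\|x^+-x\|_x=\alpha_p\|\dxx_p\|_x\le \frac{\alpha_p\gamma}{\sqrt{1-\delta}}=\omega<1 .
\]
Lemma~\ref{lemma:perturb result} then gives $x^+\in\intK$.

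\textbf{Decomposition of the new residual.} Write $\alpha:=\alpha_p$ and use \eqref{eq:PredSystemW-b} to get $s^+=(1-\alpha)s-\alpha\mu W\dxx_p$. Expand the gradient as
\[
g_{x^+}=g_x+\alpha H_x\dxx_p+R,
\]
where I will justify the standard remainder bound $\|R\|_x^*\le \omega^2/(1-\omega)$. I would do this by integrating $H_{x+t\alpha\dxx_p}-H_x$ over $t\in[0,1]$ and applying \eqref{eq:sc_hessian_comp}. Substituting gives
\[
\psi(x^+,s^+,\mu^+)=(1-\alpha)\psi(x,s,\mu)+\bigl(\mu^+-(1-\alpha)\mu\bigr)g_x+\alpha\mu^+(H_x-W)\dxx_p+\alpha(\mu^+-\mu)W\dxx_p+\mu^+R .
\]

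\textbf{Bounding each term in $\|\cdot\|_x^*$.}
\begin{itemize}
\item The first term is at most $(1-\alpha)\eta\mu$ by the neighborhood condition.
\item The second term uses \eqref{eq:mu-expansion2}, namely $|\mu^+-(1-\alpha)\mu|\le \alpha^2\gamma^2\mu/\nu=(1-\delta)\omega^2\mu/\nu$, together with $\|g_x\|_x^*=\sqrt\nu$ and $\nu\ge1$.
\item The third term is at most $\mu^+\delta\omega$, using $\|W-H_x\|_x\le\delta$ from Theorem~\ref{Thm:Hessian bound}.
\item The fourth term uses $\|W\dxx_p\|_x^*\le\sqrt{1+\delta}\,\|\dxx_p\|_W\le\sqrt{1+\delta}\,\gamma$ and the bound on $|\mu^+-\mu|$ from Theorem~\ref{thm:gap_evolution}. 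This is the source of the $\sqrt{1-\delta^2}\,\omega$ contribution once $\alpha\gamma=\sqrt{1-\delta}\,\omega$ is substituted.
\item The last term is at most $\mu^+\omega^2/(1-\omega)$.
\end{itemize}

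\textbf{Normalizing and transferring norms.} Divide by $\mu^+$ using the lower bound
\[
\mu^+\ge(1-\alpha-\gamma^2\alpha^2/\nu)\mu\ge\bigl(1-\omega-(1-\delta)\omega^2\bigr)\mu>0,
\]
which holds since $\alpha\le\omega$ (because $\gamma\ge\sqrt{1-\delta}^{\,-1}\cdot\sqrt\nu\ge\sqrt{1-\delta}$). Then pass from $\|\cdot\|_x^*$ to $\|\cdot\|_{x^+}^*$ with the factor $1/(1-\omega)$ from \eqref{eq:aux_ineq}. The terms carrying a factor $\mu$ produce the second fraction in $\eta^+$. The remainder term, after the norm transfer, gives $\omega/(1-\omega)^2$ or something dominated by it; a cruder bound $\omega^2/(1-\omega)^2\le\omega/(1-\omega)^2$ is acceptable.

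The main obstacle is this bookkeeping: arranging the grouping so that the $\mu$-proportional terms yield exactly $\eta+(1-\delta)(\omega+\omega^2)+\sqrt{1-\delta^2}\,\omega$ over the stated denominators. I would try regrouping the $\alpha\mu^+(H_x-W)$ and $\alpha(\mu^+-\mu)W$ pieces, for instance writing them as $\alpha\mu(H_x-W)\dxx_p+\alpha(\mu^+-\mu)H_x\dxx_p$, until the constants match.

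\textbf{Dual interiority.} Suppose $\eta^+<1$. Then
\[
\|s^+/\mu^+-(-g_{x^+})\|_{x^+}^*\le\eta^+<1 .
\]
By Theorem~\ref{thm:dual interior}, $-g_{x^+}\in\operatorname{int}(\K^*)$ and $\|\cdot\|_{x^+}^*=\|\cdot\|_{-g_{x^+}}$, which is the local norm of the $\nu$-LHSCB $F^*$. Applying the first statement of Lemma~\ref{lemma:perturb result} to $F^*$ gives $s^+/\mu^+\in\operatorname{int}(\K^*)$. Since $\mu^+>0$, it follows that $s^+\in\operatorname{int}(\K^*)$.
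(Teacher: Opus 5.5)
Your proof is correct, but it is organized differently from the paper's. The paper never expands $g_{x^+}$ to second order. It writes $\psi^+=\psi(x,s,\mu)+(\mu^+-\mu)g_x+\mu^+(g_{x^+}-g_x)+\alpha_p\Delta s_p$ and bounds the four pieces in $\|\cdot\|_x^*$ by $\eta\mu$, $(1-\delta)(\omega+\omega^2)\mu$, $\frac{\omega}{1-\omega}\mu^+$ (the first-order gradient estimate in \eqref{eq:aux_ineq}) and $\sqrt{1-\delta^2}\,\omega\mu$. These are exactly the ingredients of $\eta^+$, and the division by $\mu^+$ and the norm transfer are then the same as yours. You instead substitute $\Delta s_p=-s-\mu W\Delta x_p$ and use a Taylor expansion with integral remainder. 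The first-order terms then cancel except for $\alpha\mu^+(H_x-W)\Delta x_p$, which carries the factor $\delta$. So the ``bookkeeping obstacle'' you anticipate does not exist: no regrouping is needed, because only an upper bound is claimed and your terms are dominated one by one. Your five estimates give
\[
\|\psi^+\|_x^*\le\Bigl(\delta\omega+\frac{\omega^2}{1-\omega}\Bigr)\mu^+ +\Bigl((1-\alpha)\eta+(1-\delta)\omega^2+\sqrt{1-\delta^2}\,\omega\bigl(\alpha+\tfrac{\gamma^2\alpha^2}{\nu}\bigr)\Bigr)\mu .
\]
Here $\delta\omega+\omega^2/(1-\omega)\le\omega/(1-\omega)$ is equivalent to $\delta\le1$; this also absorbs your third term, which you never placed. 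Also $\alpha+\gamma^2\alpha^2/\nu\le\omega+(1-\delta)\omega^2<1$ by hypothesis, so the second bracket is at most $\eta+(1-\delta)\omega^2+\sqrt{1-\delta^2}\,\omega$. Dividing by $\mu^+$ and transferring norms then gives $\eta^+$. As for the trade-off, the paper's route needs only first-order perturbation bounds and produces the stated constant directly. Yours costs the integrated Hessian estimate (the computation of Appendix~\ref{secA1}) but is genuinely sharper: it is of order $\eta+\delta\omega+O(\omega^2)$ rather than roughly $\eta+3\omega$. With the parameters of Theorem~\ref{thm:guarantee neighborhood} it gives a value below $0.021$ instead of $0.03518$. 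This reflects that the predictor direction preserves centrality to first order up to the scaling error, and it would leave room for a larger $\omega$ and hence a larger constant $c_p$.
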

\begin{proof}
The bound on the predictor direction from Lemma~\ref{eq:lemma bound of xs} implies that
\[
        \|x^+-x\|_x
        =
        \alpha_p\|\Delta x_p\|_x
        \le
        \frac{\alpha_p\gamma}{\sqrt{1-\delta}}
        =
        \omega
        <1 .
\]
Since $x\in\operatorname{int}(\K)$, Lemma~\ref{lemma:perturb result} gives
$
x^+\in\operatorname{int}(\K).
$
We now estimate the residual $\psi^+:=\psi(x^+,s^+,\mu^+)$. Since $s^+=s+\alpha_p\Delta s_p$, the residual $\psi^+$ can be decomposed as
\[
\begin{aligned}
\psi^+
&=
s+\alpha_p\Delta s_p+\mu^+g_{x^+}        \\
&=
\psi(x,s,\mu)
+(\mu^+-\mu)g_x
+\mu^+(g_{x^+}-g_x)
+\alpha_p\Delta s_p .
\end{aligned}
\]
Taking the dual local norm at $x$ and applying the triangle inequality yield
\begin{equation}\label{ineq:auxi-ineq-2}
\begin{aligned}
\|\psi^+\|_x^*
\le
\|\psi(x,s,\mu)\|_x^*
+|\mu^+-\mu|\,\|g_x\|_x^*        
+\mu^+\|g_{x^+}-g_x\|_x^*
+\alpha_p\|\Delta s_p\|_x^* .
\end{aligned}
\end{equation}
The first term is controlled directly by 
$
\|\psi(x,s,\mu)\|_x^*\le\eta\mu .
$

We next bound the second term of \eqref{ineq:auxi-ineq-2}. By Theorem~\ref{thm:gap_evolution} and
Lemma~\ref{lem:loghom-id},
\begin{equation}\label{ineq:auxi-ineq-1}
\begin{aligned}
|\mu^+-\mu|\,\|g_x\|_x^*
\le
\alpha_p\mu
\left(1+\frac{\gamma^2\alpha_p}{\nu}\right)\sqrt{\nu}      
=
\alpha_p\sqrt{\nu}\,\mu
+
\frac{\gamma^2\alpha_p^2}{\sqrt{\nu}}\mu .
\end{aligned}
\end{equation}
The definition of $\omega$, together with
$\gamma\ge \sqrt{\nu/(1-\delta)}$, gives the two estimates
$
        \alpha_p\sqrt{\nu}
        \le
        (1-\delta)\omega,
         \,
        \frac{\gamma^2\alpha_p^2}{\sqrt{\nu}}
        \le
       (1-\delta)\omega^2 .
$
Substituting these two estimates into \eqref{ineq:auxi-ineq-1} gives
\[
        |\mu^+-\mu|\,\|g_x\|_x^*
        \le
        (1-\delta)(\omega+\omega^2)\mu .
\]
The third term of \eqref{ineq:auxi-ineq-2} is controlled by Lemma~\ref{lemma:perturb result}. Since
$\|x^+-x\|_x\le\omega$, we have
\[
        \|g_{x^+}-g_x\|_x^*
        \le
        \frac{\|x^+-x\|_x}{1-\|x^+-x\|_x}
        \le
        \frac{\omega}{1-\omega}.
\]
The last term of \eqref{ineq:auxi-ineq-2} is estimated by Lemma~\ref{eq:lemma bound of xs}:
\[
        \alpha_p\|\Delta s_p\|_x^*
        \le
        \alpha_p\sqrt{1+\delta}\,\gamma\mu
        =
        \sqrt{1-\delta^2}\,\omega\mu .
\]
Combining the preceding four bounds gives
\[
\begin{aligned}
\|\psi^+\|_x^*
&\le
\frac{\omega}{1-\omega}\mu^+
+
\Bigl[
\eta
+(1-\delta)(\omega+\omega^2)
+\sqrt{1-\delta^2}\,\omega
\Bigr]\mu .
\end{aligned}
\]
We now replace $\mu$ on the right-hand side by $\mu^+$. From Theorem~\ref{thm:gap_evolution} and the definitions of $\omega$ and $\gamma$, we have $\alpha_p\leq \frac{1-\delta}{\sqrt{\nu}}\omega\leq\omega$ and 
\[
\mu^+
\ge
\left(
1-\alpha_p-\frac{\gamma^2\alpha_p^2}{\nu}
\right)\mu                                      
\ge
\bigl(1-\omega-(1-\delta)\omega^2\bigr)\mu .
\]
Therefore,
\begin{equation}\label{ineq:auxi-ineq-4}
\|\psi^+\|_x^*
\le
\left[
\frac{\omega}{1-\omega}
+
\frac{
\eta
+(1-\delta)(\omega+\omega^2)
+\sqrt{1-\delta^2}\,\omega
}
{
1-\omega-(1-\delta)\omega^2
}
\right]\mu^+.
\end{equation}
The dual local norm at $x^+$ is now obtained from Lemma~\ref{lemma:perturb result}:
\begin{equation}\label{ineq:auxi-ineq-3}
        \|\psi^+\|_{x^+}^*
        \le
        \frac{1}{1-\|x^+-x\|_x}\|\psi^+\|_x^*
        \le
        \frac{1}{1-\omega}\|\psi^+\|_x^* .
\end{equation}
Combining \eqref{ineq:auxi-ineq-3} with \eqref{ineq:auxi-ineq-4} gives
$
        \|\psi^+\|_{x^+}^*
        \le
        \eta^+\mu^+ .
$

It remains to prove the strict dual feasibility of $s^+$ under the condition
$\eta^+<1$. Since $x^+\in\operatorname{int}(\K)$, Theorem~\ref{thm:dual interior} gives
$
        -g_{x^+}\in\operatorname{int}(\K^*)
$
and
\[
\left\|
        \frac{s^+}{\mu^+}+g_{x^+}
\right\|_{-g_{x^+}}
=
\frac{1}{\mu^+}\|\psi^+\|_{x^+}^*
\le
\eta^+
<1 .
\]
Applying Lemma~\ref{lemma:perturb result} to the conjugate barrier $F^*$ at the point
$-g_{x^+}$, we obtain
$
        \frac{s^+}{\mu^+}\in\operatorname{int}(\K^*) .
$
Since $\mu^+>0$, this implies
$
        s^+\in\operatorname{int}(\K^*) .
$
Therefore, the proof is complete.
\end{proof}
\begin{theorem}\label{thm:bound of etatau}
Let $z\in\mathcal N(\eta,\beta_l,\beta_u)$ and $0<\beta_l<1$. If $\alpha_p \in (0,1]$ satisfies $\alpha_p< \frac{\sqrt{\beta_l^2+4\beta_l\gamma^2}-\beta_l}{2\gamma^2}$, then
\[\beta_l^+\mu^+\leq\tau^+\kappa^+\leq \beta_u^+\mu^+,\]
where $\beta_l^+:=\frac{(1-\alpha_p)\beta_l-\alpha_p^2\gamma^2}{1-\alpha_p+{\gamma^2\alpha_p^2}}$ and $\beta_u^+:=\frac{(1-\alpha_p+\alpha_p^2/4)\beta_u}{1-\alpha_p-{\gamma^2\alpha_p^2}}$. Furthermore, $\tau^+>0$ and $\kappa^+>0$.
\end{theorem}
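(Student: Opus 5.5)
The proof rests on the exact expansion of the scalar product $\tau^+\kappa^+$ along the predictor direction, combined with the two-sided bounds on $\mu^+$ from Theorem~\ref{thm:gap_evolution}. Since $\tau^+=\tau+\alpha_p\Delta\tau_p$ and $\kappa^+=\kappa+\alpha_p\Delta\kappa_p$, equation \eqref{eq:PredSystemW-c} gives
\[
\tau^+\kappa^+=\tau\kappa+\alpha_p(\tau\Delta\kappa_p+\kappa\Delta\tau_p)+\alpha_p^2\Delta\tau_p\Delta\kappa_p=(1-\alpha_p)\tau\kappa+\alpha_p^2\Delta\tau_p\Delta\kappa_p .
\]
The cross term is then bounded on both sides.

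For the upper bound, \eqref{eq:bound of tkp} gives $\Delta\tau_p\Delta\kappa_p\le\tau\kappa/4$. Hence $\tau^+\kappa^+\le(1-\alpha_p+\alpha_p^2/4)\tau\kappa\le(1-\alpha_p+\alpha_p^2/4)\beta_u\mu$.

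For the lower bound, use $\Delta\tau_p\Delta\kappa_p=-\langle\Delta x_p,\Delta s_p\rangle$, which follows from Lemma~\ref{eq:lemma bound of xs}(i). The Cauchy--Schwarz inequality in the $W$-norm together with Lemma~\ref{eq:lemma bound of xs}(ii) then gives $\Delta\tau_p\Delta\kappa_p\ge-\gamma^2\mu$. Therefore $\tau^+\kappa^+\ge\bigl((1-\alpha_p)\beta_l-\alpha_p^2\gamma^2\bigr)\mu$.

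Next, convert $\mu$ into $\mu^+$ using \eqref{eq:bound of mup}.
\begin{itemize}
\item For the lower bound, divide by the upper estimate $\mu^+\le(1-\alpha_p+\gamma^2\alpha_p^2/\nu)\mu\le(1-\alpha_p+\gamma^2\alpha_p^2)\mu$, using $\nu\ge1$.
\item For the upper bound, divide by the lower estimate $\mu^+\ge(1-\alpha_p-\gamma^2\alpha_p^2/\nu)\mu\ge(1-\alpha_p-\gamma^2\alpha_p^2)\mu$.
\end{itemize}
These conversions require the numerator $(1-\alpha_p)\beta_l-\alpha_p^2\gamma^2$ to be positive, which makes the direction of the inequality valid. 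They also require $1-\alpha_p-\gamma^2\alpha_p^2>0$, which makes $\beta_u^+$ well defined.

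The main obstacle is checking these sign conditions from the step-length hypothesis. The quadratic $\gamma^2\alpha^2+\beta_l\alpha-\beta_l$ has positive root $\bigl(\sqrt{\beta_l^2+4\beta_l\gamma^2}-\beta_l\bigr)/(2\gamma^2)$. The hypothesis therefore gives $\gamma^2\alpha_p^2<\beta_l(1-\alpha_p)$, which is exactly positivity of the numerator, so $\beta_l^+>0$. Since $\beta_l<1$, it also gives $\gamma^2\alpha_p^2<1-\alpha_p$, which is the required positivity of the denominator of $\beta_u^+$. In the same way, $1-\alpha_p>0$ follows, so $\alpha_p<1$ strictly.

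Finally, we show $\tau^+,\kappa^+>0$. We have $\tau^+\kappa^+\ge\beta_l^+\mu^+>0$, and $\mu^+>0$ because $1-\alpha_p-\gamma^2\alpha_p^2>0$. So $\tau^+$ and $\kappa^+$ are nonzero and have the same sign. To exclude both being negative, consider the continuous path $t\mapsto(\tau+t\Delta\tau_p)(\kappa+t\Delta\kappa_p)$ for $t\in[0,\alpha_p]$. The same estimate applied with $t$ in place of $\alpha_p$ holds for every such $t$, since the hypothesis on the step length is monotone in $t$. Its lower bound $\bigl((1-t)\beta_l-t^2\gamma^2\bigr)\mu$ is positive there, so the product stays positive along the whole path. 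Hence neither factor crosses zero, and both remain positive, as they are at $t=0$.
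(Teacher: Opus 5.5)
Your proposal is correct and follows essentially the same route as the paper. The shared steps are:
\begin{itemize}
\item the expansion $\tau^+\kappa^+=(1-\alpha_p)\tau\kappa+\alpha_p^2\Delta\tau_p\Delta\kappa_p$;
\item the bound $\Delta\tau_p\Delta\kappa_p\le\tau\kappa/4$ for the upper estimate, and Cauchy--Schwarz in the $W$-norm for the lower one;
\item the conversion from $\mu$ to $\mu^+$ via \eqref{eq:bound of mup} with $\nu\ge1$;
\item the sign conditions taken from the step-length root, and a continuity argument for $\tau^+,\kappa^+>0$.
\end{itemize}
One small improvement: you weaken $\gamma^2\alpha_p^2/\nu$ to $\gamma^2\alpha_p^2$ before dividing, rather than after as the paper does. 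This makes the lower-bound conversion slightly cleaner, since that step then needs only $\mu>0$ and not $\mu^+\ge0$.
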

\begin{proof}
We first derive the lower bound for $\tau^+\kappa^+$. Expanding the product yields
\begin{equation}\label{eq:aux_in_thm4.6}
\begin{aligned}
\tau^+\kappa^+
&=
\tau\kappa
+\alpha_p(\kappa\Delta\tau_p+\tau\Delta\kappa_p)
+\alpha_p^2\Delta\kappa_p\Delta\tau_p  \\
&=
(1-\alpha_p)\tau\kappa
-\alpha_p^2\langle\dxx_p,\dss_p\rangle ,
\end{aligned}
\end{equation}
where the last equality uses Lemma~\ref{eq:lemma bound of xs}(i). Since
$z\in\mathcal N(\eta,\beta_l,\beta_u)$, we have
$\tau\kappa\geq\beta_l\mu$. Moreover, by the Cauchy--Schwarz inequality in the $W$-norm and Lemma~\ref{eq:lemma bound of xs}(ii), 
\[
\langle\dxx_p,\dss_p\rangle
\leq
\|\dxx_p\|_W\|\dss_p\|_W^*
\leq
\gamma^2\mu .
\]
Consequently,
\[
\begin{aligned}
\tau^+\kappa^+
\geq
(1-\alpha_p)\beta_l\mu
-\alpha_p^2\gamma^2\mu  
=
\left((1-\alpha_p)\beta_l-\alpha_p^2\gamma^2\right)\mu .
\end{aligned}
\]
The step length $\alpha_p\in(0, \frac{\sqrt{\beta_l^2+4\beta_l\gamma^2}-\beta_l}{2\gamma^2})$ satisfies
$\gamma^2\alpha_p^2+\beta_l\alpha_p-\beta_l<0$. 
Therefore, $(1-\alpha_p)\beta_l-\alpha_p^2\gamma^2>0$ and
\begin{equation}\label{eq:lower bound of taukappa}
\begin{aligned}
\tau^+\kappa^+
&\geq
\left((1-\alpha_p)\beta_l-\alpha_p^2\gamma^2\right)
\left(1-\alpha_p+\frac{\gamma^2\alpha_p^2}{\nu}\right)^{-1}
\mu^+  \\
&\geq
\frac{(1-\alpha_p)\beta_l-\alpha_p^2\gamma^2}
{1-\alpha_p+\gamma^2\alpha_p^2}
\mu^+ ,
\end{aligned}
\end{equation}
where the first inequality uses the upper estimate in \eqref{eq:bound of mup}, and the last inequality follows from $\nu\geq 1$.

We next establish the upper bound. Since $z\in\mathcal N(\eta,\beta_l,\beta_u)$, we have $\tau\kappa\leq\beta_u\mu$. Combining this inequality with \eqref{eq:aux_in_thm4.6} and \eqref{eq:bound of tkp} yields
\[
\begin{aligned}
\tau^+\kappa^+
\leq
(1-\alpha_p)\tau\kappa
+\frac{\alpha_p^2}{4}\tau\kappa  
=
\left(1-\alpha_p+\frac{\alpha_p^2}{4}\right)\tau\kappa\leq
\left(1-\alpha_p+\frac{\alpha_p^2}{4}\right)\beta_u\mu .
\end{aligned}
\]
Combining the preceding inequality with the lower bound in \eqref{eq:bound of mup} and using $\nu\geq 1$, we obtain
\[
\begin{aligned}
\tau^+\kappa^+
\leq
\left(1-\alpha_p+\frac{\alpha_p^2}{4}\right)
\frac{\beta_u\mu^+}
{1-\alpha_p-\frac{\gamma^2\alpha_p^2}{\nu}}  
\leq
\frac{(1-\alpha_p+\alpha_p^2/4)\beta_u}
{1-\alpha_p-\gamma^2\alpha_p^2}
\mu^+ ,
\end{aligned}
\]
where the last inequality holds because $1-\alpha_p-\gamma^2\alpha_p^2>(1-\alpha_p)\beta_l-\alpha_p^2\gamma^2>0$.

It remains to prove the positivity of $\tau^+$ and $\kappa^+$. If $0<\alpha_p< \frac{\sqrt{\beta_l^2+4\beta_l\gamma^2}-\beta_l}{{2} \gamma^2}$, then we have  $\tau^{+}\kappa^{+}>0$ by \eqref{eq:lower bound of taukappa}. Consequently, either $\tau^{+}>0$ and $\kappa^{+}>0$, or $\tau^{+}<0$ and $\kappa^{+}<0$. Suppose, for the sake of contradiction, that $\tau^{+}<0$ and $\kappa^{+}<0$. Since the mapping $\alpha \mapsto \tau+\alpha\,\Delta\tau_p$ is continuous and $\tau>0$, the intermediate value theorem guarantees the existence of some $\hat\alpha\in(0,\alpha_p)$ such that $\tau+\hat\alpha\Delta\tau_p=0$. For this $\hat\alpha$, we have
\[
(\tau+\hat\alpha\Delta\tau_p)\,(\kappa+\hat\alpha\Delta\kappa_p)=0,
\]
whereas the lower bound \eqref{eq:lower bound of taukappa} is strictly positive for all $\hat\alpha\in(0,\alpha_p)$. This contradiction implies that $\tau^{+}>0$ and $\kappa^{+}>0$.
\end{proof}

\subsection{Properties of the corrector step}
Having characterized the predictor step, we now turn to the corrector step. The corrector step aims to restore proximity to the central path without altering the linear residual, thereby maintaining the progress achieved by the predictor. We begin by updating the scaling matrix and establishing the associated bounds.
\begin{theorem}\label{Thm:Hessian bound corrector}
For $z^+\in \mathcal{N}(\eta^+,\beta_l^+,\beta_u^+)$, let $W^+ = H_{x^+} + \frac{s^+(s^+)^\top}{\nu(\mu^+)^2}-\frac{g_{x^+} g_{x^+}^\top}{\nu}$. If $0<\eta^+<\sqrt2-1$, then 
\begin{equation}
    (1-\delta^+)\,H_{{x^+}}\ \preceq\ W^+\ \preceq\ (1+\delta^+)\,H_{{x^+}},
\end{equation}
where $\delta^+:=(\eta^+)^2+2\eta^+<1$.
\end{theorem}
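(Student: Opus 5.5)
This is Theorem~\ref{Thm:Hessian bound} applied verbatim at the predictor point $z^+$ with $\eta$ replaced by $\eta^+$. The only work is to check that the hypotheses used there hold at $z^+$. First, membership $z^+\in\mathcal N(\eta^+,\beta_l^+,\beta_u^+)$ gives three facts. It gives $z^+\in\interior(\mathcal F)$, so $x^+\in\intK$ and $H_{x^+}\succ0$. It gives $\mu^+=\langle x^+,s^+\rangle/\nu>0$, since $s^+\in\operatorname{int}(\K^*)$ and $x^+\ne 0$. It gives the proximity bound $\|s^++\mu^+g_{x^+}\|_{x^+}^*\le\eta^+\mu^+$. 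The $\beta$-bounds on $\tau^+\kappa^+$ play no role here.

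Next, write the rank-two perturbation as
\[
W^+-H_{x^+}=\frac{1}{\nu(\mu^+)^2}\Bigl(s^+(s^+)^\top-(\mu^+g_{x^+})(\mu^+g_{x^+})^\top\Bigr).
\]
Apply Lemma~\ref{lem:matrix_norm_bound} with $M=H_{x^+}$, $u=s^+$ and $v=\mu^+g_{x^+}$. This gives
\[
\|W^+-H_{x^+}\|_{x^+}\le\frac{\|s^++\mu^+g_{x^+}\|_{x^+}^*\,\|s^+-\mu^+g_{x^+}\|_{x^+}^*}{\nu(\mu^+)^2}.
\]
The first factor is at most $\eta^+\mu^+$. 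For the second factor, use $s^+-\mu^+g_{x^+}=\psi(x^+,s^+,\mu^+)-2\mu^+g_{x^+}$ together with $\|g_{x^+}\|_{x^+}^*=\sqrt\nu$ from Lemma~\ref{lem:loghom-id}. This bounds the second factor by $(\eta^++2\sqrt\nu)\mu^+$. Hence
\[
\|W^+-H_{x^+}\|_{x^+}\le\frac{\eta^+(\eta^++2\sqrt\nu)}{\nu}\le(\eta^+)^2+2\eta^+=\delta^+,
\]
where the last step uses $\nu\ge1$.

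Finally, $\|B\|_{x^+}\le\delta^+$ means that $H_{x^+}^{-1/2}(W^+-H_{x^+})H_{x^+}^{-1/2}$ has spectrum in $[-\delta^+,\delta^+]$. Conjugating back gives $(1-\delta^+)H_{x^+}\preceq W^+\preceq(1+\delta^+)H_{x^+}$. For the last claim, $0<\eta^+<\sqrt2-1$ gives $(\eta^++1)^2<2$, that is, $\delta^+<1$. The argument has no real obstacle. The only point needing care is that $z^+$ genuinely lies in the neighborhood with $\mu^+>0$, and this is supplied by the hypothesis, as established by Theorems~\ref{thm:neighborhood of predictor} and~\ref{thm:bound of etatau}.
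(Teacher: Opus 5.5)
Your proposal is correct and matches the paper's approach: the paper's proof simply defers to that of Theorem~\ref{Thm:Hessian bound}. That proof applies Lemma~\ref{lem:matrix_norm_bound} with $M=H_x$, bounds the second factor by $(\eta+2\sqrt\nu)\mu$ using $\|g_x\|_x^*=\sqrt\nu$, and finishes with $\nu\ge1$, which is what you carry out at $z^+$ with $\eta^+$ in place of $\eta$.
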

\begin{proof}
The proof is similar to that of Theorem \ref{Thm:Hessian bound}.
\end{proof}

A fundamental property of the corrector step is that it preserves the linear residual achieved by the predictor.
\begin{lemma}\label{lem:r-corr}
Let $z^+\in\mathcal N(\eta^+,\beta_l^+,\beta_u^+)$ and $\Delta z_c=(\Delta\bar x_c;\Delta y_c;\Delta\bar s_c)$ be the solution of \eqref{eq:CorrSystemW}. Set $z^{++}:=z^+ + \alpha_c\Delta z_c$ for some $\alpha_c\in(0,1]$. Then $r(z^{++})=r(z^{+}).$
\end{lemma}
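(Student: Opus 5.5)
The argument mirrors the proof of Lemma~\ref{lem:feas_reduction}, using that the residual map $r$ is affine in $z$. Recall that
\[
r(z)=G(y;\bar x)-(0;\bar s),
\]
and $G$ is a fixed linear operator. We can therefore expand $r$ at $z^{++}=z^+ + \alpha_c\Delta z_c$:
\[
r(z^{++})=G(y^++\alpha_c\Delta y_c;\bar x^++\alpha_c\Delta\bar x_c)-(0;\bar s^++\alpha_c\Delta\bar s_c)
=r(z^+)+\alpha_c\bigl(G(\Delta y_c;\Delta\bar x_c)-(0;\Delta\bar s_c)\bigr).
\]

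By the first block \eqref{eq:CorrSystemW-a} of the corrector system, the bracketed term vanishes. Hence $r(z^{++})=r(z^+)$ for every $\alpha_c\in(0,1]$, and in fact for any real $\alpha_c$.

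The other two equations of \eqref{eq:CorrSystemW}, the scaling matrix $W^+$, and the neighborhood assumption on $z^+$ play no role here. They are needed only to guarantee that $\Delta z_c$ is well defined. There is no real obstacle: the only point needing care is that $r$ is affine and not merely linear, because of the constant offset from $z^+$, so the expansion has to be carried out around $z^+$.
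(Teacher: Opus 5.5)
Your proof is correct and is essentially the paper's own argument: expand $r$ at $z^{++}=z^++\alpha_c\Delta z_c$ and observe that the increment $\alpha_c\bigl(G(\Delta y_c;\Delta\bar x_c)-(0;\Delta\bar s_c)\bigr)$ vanishes by \eqref{eq:CorrSystemW-a}. One small correction: $r(z)=G(y;\bar x)-(0;\bar s)$ has no constant term, so it is in fact linear in $z$, and your caution that it is ``affine and not merely linear'' is unnecessary, though it does not affect the argument.
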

\begin{proof}
	Since $r(\cdot)$ is affine,
	$r(z^{+}+\alpha_c\dz_c)=r(z^{+})+\alpha_c(G(\dy_c;\dx_c)-(0;\ds_c)).$
	By \eqref{eq:CorrSystemW-a}, the increment vanishes.
\end{proof}
\begin{lemma}\label{lemma:bound of dtdk}
Let $z^+\in\mathcal N(\eta^+,\beta_l^+,\beta_u^+)$ and $\Delta z_c$ be the solution of \eqref{eq:CorrSystemW}. Define $\theta:=\max\left\{\frac{(\beta_l^+ - \sigma)^2}{4\beta_l^+},\frac{(\beta_u^+ - \sigma)^2}{4\beta_u^+}\right\}.$ Then $\Delta\tau_c  \Delta\kappa_c\leq {\theta}\,\mu^+$.
\end{lemma}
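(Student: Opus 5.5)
From \eqref{eq:CorrSystemW b} we have $\tau^+\Delta\kappa_c+\kappa^+\Delta\tau_c=\sigma\mu^+-\kappa^+\tau^+=:c$. The plan is to solve this for $\Delta\tau_c$ and maximize the resulting quadratic in $\Delta\kappa_c$, in the same way \eqref{eq:bound of tkp} was obtained for the predictor. Since $\tau^+,\kappa^+>0$ (Theorem~\ref{thm:bound of etatau}), we can write $\Delta\tau_c=(c-\tau^+\Delta\kappa_c)/\kappa^+$. Then
\[
\Delta\tau_c\Delta\kappa_c=\frac{c\,\Delta\kappa_c-\tau^+(\Delta\kappa_c)^2}{\kappa^+}.
\]
This is a concave quadratic in $\Delta\kappa_c$. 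Its maximum over $\mathbb R$ is attained at $\Delta\kappa_c=c/(2\tau^+)$, which gives
\[
\Delta\tau_c\Delta\kappa_c\le\frac{c^2}{4\tau^+\kappa^+}=\frac{(\tau^+\kappa^+-\sigma\mu^+)^2}{4\tau^+\kappa^+}.
\]
An equivalent route is the elementary inequality $ab\le(a+b)^2/4$ applied to $a=\kappa^+\Delta\tau_c$ and $b=\tau^+\Delta\kappa_c$, after dividing by $\tau^+\kappa^+$.

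Next, normalize by setting $t:=\tau^+\kappa^+/\mu^+$. Since $z^+\in\mathcal N(\eta^+,\beta_l^+,\beta_u^+)$, we have $t\in[\beta_l^+,\beta_u^+]$ with $\beta_l^+>0$, and the bound becomes
\[
\Delta\tau_c\Delta\kappa_c\le\mu^+\,h(t),\qquad h(t):=\frac{(t-\sigma)^2}{4t}=\frac{1}{4}\left(t-2\sigma+\frac{\sigma^2}{t}\right).
\]
On $t>0$ the function $h$ is convex, since $h''(t)=\sigma^2/(2t^3)\ge0$. A convex function on a compact interval attains its maximum at an endpoint, so $h(t)\le\max\{h(\beta_l^+),h(\beta_u^+)\}=\theta$. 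This yields $\Delta\tau_c\Delta\kappa_c\le\theta\mu^+$.

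None of these steps is a real obstacle. The only points needing care are positivity of $\tau^+$, $\kappa^+$ and $\mu^+$, which justifies the division and the normalization, and the convexity argument that reduces the maximum over $[\beta_l^+,\beta_u^+]$ to the two endpoints. For the positivity of $\mu^+$ I would use the lower bound in \eqref{eq:bound of mup}, or simply take it as given as part of membership in the neighborhood.
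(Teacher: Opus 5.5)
Your proof is correct and follows the paper's argument. You eliminate one increment via the linearized $\tau\kappa$ equation, bound the resulting concave quadratic by $(\sigma\mu^+-\tau^+\kappa^+)^2/(4\tau^+\kappa^+)$, and reduce the maximum of $h$ over $[\beta_l^+,\beta_u^+]$ to the endpoints; the paper does the same, except that it eliminates $\Delta\kappa_c$ rather than $\Delta\tau_c$ and uses the sign of $h'$ instead of convexity, which are immaterial differences.

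One small correction concerns positivity. The positivity of $\tau^+$, $\kappa^+$ and $\mu^+$ follows directly from $z^+\in\operatorname{int}(\mathcal F)$, which is part of the definition of the neighborhood. You therefore should not invoke Theorem~\ref{thm:bound of etatau} or the bound \eqref{eq:bound of mup}, since both depend on hypotheses about the predictor step length that are not assumed in this lemma.
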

\begin{proof}
Using \eqref{eq:CorrSystemW b} to eliminate $\Delta\kappa_c$, we obtain
\begin{align*}
    \Delta\tau_c  \Delta\kappa_c
    &= -\frac{\kappa^+}{\tau^+} (\Delta\tau_c)^2 + \frac{\sigma\mu^+ - \tau^+\kappa^+}{\tau^+} \Delta\tau_c
    \leq \frac{(\sigma\mu^+ - \tau^+\kappa^+)^2}{4\tau^+\kappa^+}.
    \label{eq:quadratic_function}
\end{align*}
Set $\zeta := \frac{\kappa^+\tau^+}{\mu^+} \in [\beta_l^+, \beta_u^+]$ and define the function $h(\zeta) = \frac{(\sigma - \zeta)^2}{4\zeta}$. Substituting $\zeta = \frac{\kappa^+\tau^+}{\mu^+}$ into $h(\zeta)$ yields the identity
\[
\mu^+ h(\zeta) = \frac{(\sigma\mu^+ - \tau^+\kappa^+)^2}{4\tau^+\kappa^+}.
\]
Since $\zeta\in[\beta_l^+,\beta_u^+]$ and
$
h'(\zeta)=\frac{(\zeta-\sigma)(\zeta+\sigma)}{4\zeta^2},
$
the maximum of $h$ over this interval is attained at one of the endpoints, i.e.,
\[
h(\zeta) \leq \max\left\{h(\beta_l^+),h(\beta_u^+)\right\}.
\]
Combining the above inequalities, we obtain
\[
\Delta\tau_c  \Delta\kappa_c \leq \frac{(\sigma\mu^+ - \tau^+\kappa^+)^2}{4\tau^+\kappa^+} = \mu^+ h(\zeta) \leq \mu^+ \max\left\{\frac{(\beta_l^+ - \sigma)^2}{4\beta_l^+},\frac{(\beta_u^+ - \sigma)^2}{4\beta_u^+}\right\}.
\]
The desired inequality follows.
\end{proof}

As in the predictor-step analysis, we now establish norm bounds for the corrector direction.
\begin{lemma}\label{eq:lemma bound of xs_c}
For $z^+ \in  \mathcal{N}(\eta^+,\beta_l^+,\beta_u^+)$, let $\dz_c$ solve \eqref{eq:CorrSystemW}. Then,
\begin{enumerate}
    \item[(i)]$\langle\dx_c,\ds_c\rangle=0$;
    \item[(ii)] $ \|\dxx_c\|_{W^+} \leq \gamma^+$ and $\|\dss_c\|_{W^+}^*\leq  \gamma^+\mu^+$, where $\gamma^+:=\sqrt{\frac{(\eta^+)^2}{1-\delta^+}+2\theta}$.
    Furthermore,
    $
    \|\dxx_c\|_{{x^+}}\leq \frac{\gamma^+}{\sqrt{1-\delta^+}},\, \|\dss_c\|_{{x^+}}^*\leq\sqrt{1+\delta^+}\,\gamma^+ \mu^+ .
$
\end{enumerate}
\end{lemma}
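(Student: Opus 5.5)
The plan mirrors the proof of Lemma~\ref{eq:lemma bound of xs}. For part (i), \eqref{eq:CorrSystemW-a} says $G(\dy_c;\dx_c)=(0;\ds_c)$. Taking the inner product with $(\dy_c;\dx_c)$ and using the skew-symmetry of $G$ gives $0=\langle(\dy_c;\dx_c),G(\dy_c;\dx_c)\rangle=\langle\dx_c,\ds_c\rangle$ directly. This is even simpler than in the predictor case, because the right-hand side is zero, so no cancellation through $\mu W x = s$ is needed. Consequently $\langle\dxx_c,\dss_c\rangle=-\Delta\tau_c\Delta\kappa_c$.

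For part (ii), I will multiply \eqref{eq:CorrSystemW-b} by $(W^+)^{-1/2}$, take norms and square, obtaining
\[
(\|\dss_c\|_{W^+}^*)^2+(\mu^+)^2\|\dxx_c\|_{W^+}^2=(\|\psi^+\|_{W^+}^*)^2-2\mu^+\langle\dxx_c,\dss_c\rangle=(\|\psi^+\|_{W^+}^*)^2+2\mu^+\Delta\tau_c\Delta\kappa_c .
\]
Lemma~\ref{lemma:bound of dtdk} bounds the last term by $2\theta(\mu^+)^2$. For the first term, Theorem~\ref{Thm:Hessian bound corrector} gives $W^+\succeq(1-\delta^+)H_{x^+}$, hence $(W^+)^{-1}\preceq(1-\delta^+)^{-1}H_{x^+}^{-1}$. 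Together with the neighborhood condition $\|\psi^+\|_{x^+}^*\le\eta^+\mu^+$, this yields $(\|\psi^+\|_{W^+}^*)^2\le (\eta^+)^2(\mu^+)^2/(1-\delta^+)$. Dropping either nonnegative term on the left then gives $\|\dxx_c\|_{W^+}\le\gamma^+$ and $\|\dss_c\|_{W^+}^*\le\gamma^+\mu^+$. Here the hypothesis $\eta^+<\sqrt2-1$ from Theorem~\ref{Thm:Hessian bound corrector} is implicitly needed so that $\delta^+<1$ and $W^+\succ0$.

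The local-norm bounds at $x^+$ follow exactly as before. From $H_{x^+}\preceq(1-\delta^+)^{-1}W^+$ we get $\|\dxx_c\|_{x^+}\le\|\dxx_c\|_{W^+}/\sqrt{1-\delta^+}$. From $H_{x^+}^{-1}\preceq(1+\delta^+)(W^+)^{-1}$ we get $\|\dss_c\|_{x^+}^*\le\sqrt{1+\delta^+}\,\|\dss_c\|_{W^+}^*$.

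The main point needing care is confirming that Lemma~\ref{lemma:bound of dtdk} applies, i.e.\ that $\tau^+,\kappa^+>0$, which Theorem~\ref{thm:bound of etatau} provides. Otherwise the argument is routine.
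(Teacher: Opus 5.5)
Your proposal is correct and follows the paper's proof essentially step for step. Part (i) uses the skew-symmetry of $G$, and part (ii) squares the $(W^+)^{-1/2}$-scaled corrector equation, then applies Theorem~\ref{Thm:Hessian bound corrector}, the proximity condition $\|\psi^+\|_{x^+}^*\le\eta^+\mu^+$, and Lemma~\ref{lemma:bound of dtdk}. One small point: the positivity of $\tau^+,\kappa^+$ is already contained in the hypothesis $z^+\in\mathcal N(\eta^+,\beta_l^+,\beta_u^+)\subseteq\operatorname{int}(\mathcal F)$, so you do not need to appeal to Theorem~\ref{thm:bound of etatau}.
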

\begin{proof}
    (i) Taking the inner product of \eqref{eq:CorrSystemW-a} with $(\Delta y_c,\dx_c)$ gives $\langle\dx_c,\ds_c\rangle=0$ since $G$ is skew-symmetric.
    
     (ii) Define $\psi^+:=\psi( x^+, s^+,\mu^+)$. Multiplying \eqref{eq:CorrSystemW-b} by $(W^+)^{-\frac{1}{2}}$, taking norms, and then squaring both sides give
    \begin{equation*}
    \begin{aligned}
        (\|\dss_c\|_{W^+}^*)^2+(\mu^+)^2\|\dxx_c\|_{W^+}^2&=(\|\psi^+\|_{W^+}^*)^2-2\mu^+\langle\dxx_c,\dss_c\rangle.
        \end{aligned}
    \end{equation*}
    Using Theorem~\ref{Thm:Hessian bound corrector}, together with the relation $\langle\dxx_c,\dss_c\rangle=-\Delta\kappa_c\Delta\tau_c$, we obtain
\begin{equation*}
\begin{aligned}
(\|\dss_c\|_{W^+}^*)^2
+(\mu^+)^2\|\dxx_c\|_{W^+}^2
&\leq
\frac{(\|\psi^+\|_{x^+}^*)^2}{1-\delta^+}
+2\mu^+\Delta\kappa_c\Delta\tau_c  \\
&\leq
\frac{(\eta^+\mu^+)^2}{1-\delta^+}
+2\theta(\mu^+)^2  
=
(\gamma^+\mu^+)^2 .
\end{aligned}
\end{equation*}
    Consequently, we have \begin{equation}\label{eq:correct bound}
    \|\dxx_c\|_{W^+} \leq \gamma^+ \quad\text{and}\quad \|\dss_c\|_{W^+}^*\leq \gamma^+ \mu^+.\end{equation}
    It follows from Theorem~\ref{Thm:Hessian bound corrector} and \eqref{eq:correct bound} that
     \begin{equation*}
     \begin{aligned}
         \|\dxx_c\|_{x^+}&\leq\frac{1}{\sqrt{1-\delta^+}} \|\dxx_c\|_{W^+}\leq \frac{\gamma^+}{\sqrt{1-\delta^+}},\\
          \|\dss_c\|_{x^+}^*&\leq\sqrt{1+\delta^+} \|\dss_c\|_{W^+}^*\leq   \sqrt{1+\delta^+}\gamma^+ \mu^+.
         \end{aligned}
     \end{equation*}
     This completes the proof.
\end{proof}

We now quantify the evolution of the complementarity measures under the corrector step.
\begin{theorem}\label{thm:gap_evolution_corrector}
For $ z^+ \in \mathcal{N}(\eta^+,\beta_l^+,\beta_u^+) $ and $\sigma\geq\beta_l^+$, let $ z^{++} = z^+ + \alpha_{c} \dz_{c}$, $\mu^{++}:=\frac{\langle{x}^{++}, {s}^{++}\rangle}{{\nu}}$ and $\bar{\mu}^{++}:=\frac{\langle\bar{x}^{++}, \bar{s}^{++}\rangle}{\bar{\nu}}$. Then
\begin{subequations}
\begin{align}
\left(1-\frac{(\alpha_c\gamma^+)^2}{\nu}\right)\mu^+\leq\mu^{++}&\leq \left(1+\frac{(\alpha_c\gamma^+)^2}{\nu}\right)\mu^+,\label{ineq:mu pp}\\
 |\mu^{++}-\mu^{+}|&\leq\frac{(\alpha_c\gamma^+)^2}{\nu}\mu^+,\\
 \bar{\mu}^{++}&\leq\left(1+\frac{\alpha_c\sigma-\alpha_c\beta_l^+}{\nu}\right)\bar{\mu}^+.\label{ineq:barmu pp}
 \end{align}
\end{subequations}
\end{theorem}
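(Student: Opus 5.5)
We expand $\mu^{++}$ and $\bar\mu^{++}$ in $\alpha_c$, exactly as in the proof of Theorem~\ref{thm:gap_evolution}, and control the linear and quadratic terms separately.

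\textbf{Cone measure $\mu^{++}$.} We write
\[
\nu\mu^{++}=\langle x^+,s^+\rangle+\alpha_c\bigl(\langle \Delta x_c,s^+\rangle+\langle x^+,\Delta s_c\rangle\bigr)+\alpha_c^2\langle\Delta x_c,\Delta s_c\rangle .
\]
For the first-order term we substitute $\Delta s_c=-\psi^+-\mu^+W^+\Delta x_c$ from \eqref{eq:CorrSystemW-b}, where $\psi^+=s^++\mu^+g_{x^+}$. This gives
\[
\langle \Delta x_c,s^+\rangle-\langle x^+,\psi^+\rangle-\mu^+\langle W^+x^+,\Delta x_c\rangle .
\]
The secant relation $\mu^+W^+x^+=s^+$ holds for $W^+$ exactly as for $W$. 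It cancels the first and third pieces, leaving $-\langle x^+,\psi^+\rangle$. By Lemma~\ref{lem:loghom-id},
\[
\langle x^+,\psi^+\rangle=\langle x^+,s^+\rangle+\mu^+\langle x^+,g_{x^+}\rangle=\nu\mu^+-\nu\mu^+=0 .
\]
The linear term therefore vanishes, and
\[
\mu^{++}=\mu^+ +\frac{\alpha_c^2}{\nu}\langle \Delta x_c,\Delta s_c\rangle .
\]
The $W^+$-Cauchy--Schwarz inequality together with Lemma~\ref{eq:lemma bound of xs_c}(ii) gives $|\langle \Delta x_c,\Delta s_c\rangle|\le(\gamma^+)^2\mu^+$. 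This yields \eqref{ineq:mu pp} and the bound on $|\mu^{++}-\mu^+|$ at once.

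\textbf{Homogeneous measure $\bar\mu^{++}$.} By Lemma~\ref{eq:lemma bound of xs_c}(i), the quadratic term $\langle\Delta\bar x_c,\Delta\bar s_c\rangle$ is zero. So we only need the first-order term. This equals the conic first-order term, which is $0$ by the computation above, plus $\kappa^+\Delta\tau_c+\tau^+\Delta\kappa_c=\sigma\mu^+-\tau^+\kappa^+$ from \eqref{eq:CorrSystemW b}. Hence
\[
\bar\nu\bar\mu^{++}=\bar\nu\bar\mu^++\alpha_c(\sigma\mu^+-\tau^+\kappa^+)\le \bar\nu\bar\mu^++\alpha_c(\sigma-\beta_l^+)\mu^+ ,
\]
using $\tau^+\kappa^+\ge\beta_l^+\mu^+$ from the neighborhood. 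The hypothesis $\sigma\ge\beta_l^+$ makes this increment nonnegative. We then need $\mu^+\le \bar\nu\bar\mu^+/\nu$. This follows from $\bar\nu\bar\mu^+=\nu\mu^++\tau^+\kappa^+\ge\nu\mu^+$, since $\tau^+\kappa^+>0$. Dividing by $\bar\nu$ gives \eqref{ineq:barmu pp}.

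\textbf{Expected difficulty.} There is no real obstacle. The only delicate point is verifying that the linear conic term cancels. That requires both the secant identity for $W^+$ and $\langle x^+,\psi^+\rangle=0$; the latter holds precisely because $\mu^+$ is defined as $\langle x^+,s^+\rangle/\nu$. A second small point is that the sign condition $\sigma\ge\beta_l^+$ is what lets us replace $\mu^+$ by the larger quantity $\bar\nu\bar\mu^+/\nu$.
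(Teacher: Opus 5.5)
Your proposal is correct and follows the paper's proof essentially step for step. The shared steps are: cancelling the first-order conic term via the secant identity $\mu^+W^+x^+=s^+$ together with $\langle x^+,\psi^+\rangle=0$; bounding $|\langle\Delta x_c,\Delta s_c\rangle|\le(\gamma^+)^2\mu^+$ by $W^+$-Cauchy--Schwarz and Lemma~\ref{eq:lemma bound of xs_c}(ii); and using the exact identity $\bar\nu\bar\mu^{++}=\bar\nu\bar\mu^{+}+\alpha_c(\sigma\mu^+-\tau^+\kappa^+)$ followed by $\mu^+\le\bar\nu\bar\mu^+/\nu$. You state explicitly that $\sigma\ge\beta_l^+$ is what licenses the final replacement, whereas the paper uses this only implicitly.
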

\begin{proof}
 Let
$
        \psi^+ := \psi(x^+,s^+,\mu^+).
$
Since $\langle x^+,g_{x^+}\rangle=-\nu$, we have
\[
        \langle x^+,\psi^+\rangle
        =
        \langle x^+,s^+\rangle
        +\mu^+\langle x^+,g_{x^+}\rangle
        =
        \nu\mu^+-\nu\mu^+
        =0 .
\]
The corrector equation \eqref{eq:CorrSystemW-b}, together with the scaling relation $\mu^+W^+x^+=s^+$, gives
\begin{equation}\label{eq:relation_corr_xs}
        \langle x^+,\Delta s_c\rangle
        =
        -\mu^+\langle x^+,W^+\Delta x_c\rangle
        -\langle x^+,\psi^+\rangle
        =
        -\langle \mu^+W^+x^+,\Delta x_c\rangle= -\langle s^+,\Delta x_c\rangle.
\end{equation}
Consequently,
\[
\begin{aligned}
\mu^{++}
=
\frac{1}{\nu}
\langle x^+ +\alpha_c\Delta x_c,
        s^+ +\alpha_c\Delta s_c\rangle                        
=
\mu^+
+
\frac{\alpha_c^2}{\nu}\langle \Delta x_c,\Delta s_c\rangle .
\end{aligned}
\]
By the Cauchy--Schwarz inequality in the $W^+$-norm and Lemma~\ref{eq:lemma bound of xs_c}(ii),
\[
        |\langle \Delta x_c,\Delta s_c\rangle|
        \le
        \|\Delta x_c\|_{W^+}\|\Delta s_c\|_{W^+}^*
        \le
        (\gamma^+)^2\mu^+ .
\]
This proves the two-sided bound for $\mu^{++}$, and the estimate for $|\mu^{++}-\mu^+|$ follows immediately.

Expanding the definition of $\bar\mu^{++}$ gives
\[
\begin{aligned}
\bar\nu\bar\mu^{++}
=
\langle x^++\alpha_c\Delta x_c,
        s^++\alpha_c\Delta s_c\rangle                         
+
(\tau^+ +\alpha_c\Delta\tau_c)
(\kappa^+ +\alpha_c\Delta\kappa_c) .
\end{aligned}
\]
Using
$\langle \Delta\bar x_c,\Delta\bar s_c\rangle=0$, \eqref{eq:relation_corr_xs}
and \eqref{eq:CorrSystemW b},
we obtain
\[
\begin{aligned}
\bar\nu\bar\mu^{++}
=
\nu\mu^+ +\tau^+\kappa^+
+
\alpha_c(\sigma\mu^+-\tau^+\kappa^+)        
=
\bar\nu\bar\mu^+
+
\alpha_c(\sigma\mu^+-\tau^+\kappa^+) .
\end{aligned}
\]
Since $z^+\in\mathcal N(\eta^+,\beta_l^+,\beta_u^+)$, we have $\tau^+\kappa^+\ge \beta_l^+\mu^+$. Hence
\[
        \bar\nu\bar\mu^{++}
        \le
        \bar\nu\bar\mu^+
        +\alpha_c(\sigma-\beta_l^+)\mu^+ .
\]
Finally, because
$
        \bar\nu\bar\mu^+
        =
        \nu\mu^+ +\tau^+\kappa^+
        \ge \nu\mu^+ ,
$
we have $\mu^+\le \bar\nu\bar\mu^+/\nu$. Dividing by $\bar\nu$ then gives
\[
        \bar\mu^{++}
        \le
        \left(
        1+\frac{\alpha_c(\sigma-\beta_l^+)}{\nu}
        \right)\bar\mu^+ .
\]
This yields the desired conclusion.
\end{proof}
 
The proximity condition for the corrector step is established in the following theorem.
\begin{theorem}\label{Them:corrector-prox}
For $z^+ \in \mathcal{N}(\eta^+,\beta_l^+,\beta_u^+)$, let $\dz_c$ solve \eqref{eq:CorrSystemW} and $z^{++}=z^++\alpha_c \Delta z_c$ for some $0<\alpha_c\leq 1$ such that $\omega^+:=\frac{\alpha_c\gamma^+}{\sqrt{1-\delta^+}}{ < 1}$. Then $x^{++}\in \intK$ and 
\begin{equation}\label{eq:psi neighborhood}
\begin{aligned}
&\|\psi( x^{++}, s^{++},\mu^{++})\|_{ x^{++}}^{*}\leq \eta^{++} \mu^{++},
\end{aligned}
\end{equation}
where $\eta^{++}:=\left(1-{(\alpha_c\gamma^+)^2}\right)^{-1}\bigg[\Bigl(\frac{\te}{1-\te}\Bigr)^2
+\frac{(1-\alpha_c+\delta^+)\te}{\alpha_c(1-\te)}
+(1-\alpha_c){\tilde{\eta}}+(\alpha_c\gamma^+)^2\bigg]$ \\and $\tilde{\eta}:=\frac{\sqrt{1+\delta^+}\,\gamma^+}{1-\omega^+}$. Furthermore, if $\eta^{++}<1$, then $s^{++}\in \operatorname{int}(\K^*)$.
\end{theorem}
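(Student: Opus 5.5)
The plan is to read the corrector step as one inexact damped Newton step on a shifted barrier function, and to get the inexact-Newton part of $\eta^{++}$ from Theorem~\ref{thm:inexact_newton}.

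\textbf{Setting up the Newton step.} Define
\[
f(u):=\frac{\langle s^+ +\Delta s_c,u\rangle}{\mu^+}+F(u),\quad u\in\intK .
\]
Adding a linear term does not change the Hessian or the third derivative, so $f$ is a self-concordant barrier with $\nabla^2 f=H_u$. Its gradient at $x^+$ is
\[
\nabla f(x^+)=\frac{\psi^+ +\Delta s_c}{\mu^+}.
\]
By \eqref{eq:CorrSystemW-b}, this equals $-W^+\Delta x_c$. Hence $\Delta x_c=-(W^+)^{-1}\nabla f(x^+)$ is exactly the inexact Newton direction $\tilde n(x^+)$ of Theorem~\ref{thm:inexact_newton}, with $W=W^+$.

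\textbf{Applying Theorem~\ref{thm:inexact_newton}.} Theorem~\ref{Thm:Hessian bound corrector} supplies \eqref{eq:relative_H_approx} with $\delta=\delta^+$. Lemma~\ref{eq:lemma bound of xs_c} gives
\[
\|\tilde n(x^+)\|_{x^+}\le \frac{\gamma^+}{\sqrt{1-\delta^+}}=\frac{\te}{\alpha_c},
\qquad
\rho=\alpha_c\|\Delta x_c\|_{x^+}\le \te<1 .
\]
So Theorem~\ref{thm:inexact_newton} yields $x^{++}\in\intK$. Since both terms in \eqref{eq:main_bound} are increasing in $\rho$, it also yields
\[
\|\nabla f(x^{++})\|_{x^{++}}^*\le\Bigl(\frac{\te}{1-\te}\Bigr)^2+\frac{(1-\alpha_c+\delta^+)\te}{\alpha_c(1-\te)} .
\]
Here I use that $\|n(x^{++})\|_{x^{++}}=\|\nabla f(x^{++})\|_{x^{++}}^*$.

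\textbf{Decomposing the new residual.} Write $s^{++}=s^+ +\Delta s_c-(1-\alpha_c)\Delta s_c$. Then
\[
\psi(x^{++},s^{++},\mu^{++})=\mu^+\nabla f(x^{++})-(1-\alpha_c)\Delta s_c+(\mu^{++}-\mu^+)g_{x^{++}} .
\]
I bound the three pieces in the dual norm at $x^{++}$.
\begin{enumerate}
\item[(a)] The first piece is $\mu^+$ times the Newton bound above.
\item[(b)] For the second piece, use \eqref{eq:aux_ineq} with $\|x^{++}-x^+\|_{x^+}\le\te$, together with Lemma~\ref{eq:lemma bound of xs_c}. This gives $\|\Delta s_c\|_{x^{++}}^*\le \sqrt{1+\delta^+}\gamma^+\mu^+/(1-\te)=\tilde\eta\mu^+$, so the piece contributes $(1-\alpha_c)\tilde\eta\mu^+$.
\item[(c)] For the third piece, use $\|g_{x^{++}}\|_{x^{++}}^*=\sqrt\nu$ and Theorem~\ref{thm:gap_evolution_corrector}. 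Its contribution is at most $(\alpha_c\gamma^+)^2\mu^+/\sqrt\nu\le(\alpha_c\gamma^+)^2\mu^+$, using $\nu\ge1$.
\end{enumerate}
Finally, I convert $\mu^+$ to $\mu^{++}$ through the lower bound in \eqref{ineq:mu pp}:
\[
\mu^{++}\ge\bigl(1-(\alpha_c\gamma^+)^2/\nu\bigr)\mu^+\ge\bigl(1-(\alpha_c\gamma^+)^2\bigr)\mu^+ .
\]
This factor is positive because $\alpha_c\gamma^+\le\te<1$. Combining (a)--(c) with this gives exactly $\eta^{++}$.

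\textbf{Dual feasibility.} This repeats the end of Theorem~\ref{thm:neighborhood of predictor}. Since $x^{++}\in\intK$, we have $-g_{x^{++}}\in\interior(\K^*)$. By Theorem~\ref{thm:dual interior},
\[
\Bigl\|\frac{s^{++}}{\mu^{++}}+g_{x^{++}}\Bigr\|_{-g_{x^{++}}}\le\eta^{++}<1 .
\]
Lemma~\ref{lemma:perturb result} applied to $F^*$ then gives $s^{++}\in\interior(\K^*)$.

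\textbf{Main obstacle.} The key step is the choice of the shifted function $f$, whose linear term must include $\Delta s_c$ so that $\Delta x_c$ is exactly a $W^+$-scaled Newton step. After that, the only care needed is to evaluate every norm at $x^{++}$ and to remember that the Newton bound is dimensionless and must be multiplied by $\mu^+$.
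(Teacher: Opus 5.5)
Your proposal is correct and follows the paper's proof essentially step for step. Both arguments use the same shifted barrier $f$ with $s^+ + \Delta s_c$ in the linear term, identify $\Delta x_c$ as the $W^+$-scaled inexact Newton direction, apply Theorem~\ref{thm:inexact_newton} with $\delta=\delta^+$, and use the same three-term decomposition of $\psi(x^{++},s^{++},\mu^{++})$. They then convert from $\mu^+$ to $\mu^{++}$ via Theorem~\ref{thm:gap_evolution_corrector} and reuse the dual-feasibility argument from Theorem~\ref{thm:neighborhood of predictor}.
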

\begin{proof}
 Note that $\|x^{++}-x^+\|_{x^+}=\alpha_c\|\dxx_c\|_{x^+}\leq\frac{\alpha_c\gamma^+}{\sqrt{1-\delta^+}}{ < 1}$, and hence $x^{++}\in \intK$ by Lemma~\ref{lemma:perturb result}. 
Next, we define the function $f:\operatorname{int}(\K)\to\mathbb R$  by
\[
f(v):=\frac{1}{\mu^{+}}( s^+ +\dss_c)^\top v+F(v).
\]
Since $F$ is a self-concordant barrier on $\operatorname{int}(\K)$, the function $f$ is also a self-concordant barrier on $\operatorname{int}(\K)$ and its Hessian equals $H_v$.
Hence the exact Newton direction of $f$ at $v\in\operatorname{int}(\K)$ is
\[\label{eq:newton direction}
n_f(v):=-H_v^{-1}\nabla f(v)=-\frac{1}{\mu^{+}} H_v^{-1}\bigl( s^+ +\dss_c+\mu^{+} g_v\bigr).
\]
Now define the inexact Newton direction for $f$ at $x^+$ by
\[
\tilde{n}_f(x^+):=-(W^+)^{-1}\nabla f(x^+)=-\frac{1}{\mu^{+}} (W^+)^{-1}\bigl( s^+ +\dss_c+\mu^{+} g_{x^+}\bigr).
\]
Using $\psi^+ = s^+ + \mu^+ g_{x^+}$ and \eqref{eq:CorrSystemW-b},
we obtain
\[
\tilde{n}_f(x^+) = -\frac{1}{\mu^+}(W^+)^{-1}(\psi^+ + \Delta s_c) = \Delta x_c,
\]
and therefore
\[
x^{++} = x^+ + \alpha_c \Delta x_c = x^+ + \alpha_c \tilde{n}_f(x^+).
\]
Moreover, by Theorem~\ref{Thm:Hessian bound corrector}, we have $
(1 - \delta^+) H_{x^+} \preceq W^+ \preceq (1 + \delta^+) H_{x^+}.$ Thus Theorem~\ref{thm:inexact_newton} is applicable to the self-concordant barrier $f$ at the point $x^+$, with scaling matrix $W^+$, step length $ \alpha_c$, and
\[
\rho^+ := \alpha_c \|\tilde{n}_f(x^+)\|_{x^+} = \alpha_c \|\Delta x_c\|_{x^+} \leq \omega^+ < 1.
\]
The inexact Newton estimate then yields
\[
\|n_f(x^{++})\|_{x^{++}} \leq \left( \frac{\rho^+}{1 - \rho^+} \right)^2 + \frac{1 - \alpha_c + \delta^+}{1 - \rho^+} \|\tilde{n}_f(x^+)\|_{x^+}.
\]
Using the bounds $\rho^+ \leq \omega^+$ and
$
\|\tilde{n}_f(x^+)\|_{x^+} = \|\Delta x_c\|_{x^+} \leq \frac{\omega^+}{\alpha_c},
$
we further obtain
\[
\|n_f(x^{++})\|_{x^{++}} \leq \left( \frac{\omega^+}{1 - \omega^+} \right)^2 + \frac{(1 - \alpha_c + \delta^+)\omega^+}{\alpha_c(1 - \omega^+)}.
\]
By the definition of $n_f(x^{++})$, this estimate is equivalent to
\begin{equation}\label{eq:newton bound}
\frac{1}{\mu^+} \|s^+ + \Delta s_c + \mu^+ g_{x^{++}}\|_{x^{++}}^* \leq \left( \frac{\omega^+}{1 - \omega^+} \right)^2 + \frac{(1 - \alpha_c + \delta^+)\omega^+}{\alpha_c(1 - \omega^+)}. 
\end{equation}
Lemma~\ref{lemma:perturb result}, together with Lemma~\ref{eq:lemma bound of xs_c}(ii), gives
\begin{equation}\label{eq:bound of dss_c}
\|\dss_c\|_{ x^{++}}^*\leq \frac{\|\dss_c\|_{ x^{+}}^*}{1-\alpha_c\|\dxx_c\|_{ x^{+}}}\leq \frac{\sqrt{1+\delta^+}\,\gamma^+\mu^+}{1-\omega^+}={\tilde{\eta}\mu^+}.
\end{equation}

We are now ready to estimate \eqref{eq:psi neighborhood}. Let $\psi^{++}:=\psi(x^{++},s^{++},\mu^{++}) $. By adding and subtracting $s^+ +\Delta s_c+\mu^+g_{x^{++}}$, we have
\[
\begin{aligned}
\psi^{++}
&=
s^+ +\alpha_c\Delta s_c+\mu^{++}g_{x^{++}}        \\
&=
\bigl(s^+ +\Delta s_c+\mu^+g_{x^{++}}\bigr)
-(1-\alpha_c)\Delta s_c
+(\mu^{++}-\mu^+)g_{x^{++}} .
\end{aligned}
\]
Taking the dual norm at $x^{++}$ and dividing by $\mu^{++}$, we estimate the three terms on the right-hand side separately. The identity $\|g_{x^{++}}\|^*_{x^{++}}=\sqrt{\nu}$, together with \eqref{eq:newton bound} and \eqref{eq:bound of dss_c}, leads to
\[
\begin{aligned}
\frac{\|\psi^{++}\|_{x^{++}}^*}{\mu^{++}}
\le
\frac{\mu^+}{\mu^{++}}
\bigg[
\left(\frac{\omega^+}{1-\omega^+}\right)^2
+
\frac{(1-\alpha_c+\delta^+)\omega^+}
     {\alpha_c(1-\omega^+)}
+
(1-\alpha_c)\widetilde\eta
\bigg]                                      
+
\frac{|\mu^{++}-\mu^+|}{\mu^{++}}\sqrt{\nu}.
\end{aligned}
\]
Using Theorem~\ref{thm:gap_evolution_corrector} and $\nu\ge1$, we further obtain
\begin{small}
\[
\begin{aligned}
\frac{\|\psi^{++}\|_{x^{++}}^*}{\mu^{++}}
&\le
\left(
1-\frac{(\alpha_c\gamma^+)^2}{\nu}
\right)^{-1}
\bigg[
\left(\frac{\omega^+}{1-\omega^+}\right)^2
+
\frac{(1-\alpha_c+\delta^+)\omega^+}
     {\alpha_c(1-\omega^+)}
+
(1-\alpha_c)\widetilde\eta
+
\frac{(\alpha_c\gamma^+)^2}{\sqrt\nu}
\bigg]                                      \\
&\le
\left(1-(\alpha_c\gamma^+)^2\right)^{-1}
\bigg[
\left(\frac{\omega^+}{1-\omega^+}\right)^2
+
\frac{(1-\alpha_c+\delta^+)\omega^+}
     {\alpha_c(1-\omega^+)}
+
(1-\alpha_c)\widetilde\eta
+
(\alpha_c\gamma^+)^2
\bigg].
\end{aligned}
\]
\end{small}
This proves
$
        \|\psi^{++}\|_{x^{++}}^*
        \le
        \eta^{++}\mu^{++}.
$

Since $\eta^{++}<1$, the same argument as in the proof of \Cref{thm:neighborhood of predictor} implies that $s^{++}\in \operatorname{int}(\K^*)$.
\end{proof}

\begin{theorem}\label{Thm:corrector-scale-bound}
Let $z^+\in\mathcal N(\eta^+,\beta_l^+,\beta_u^+)$. Suppose that $\omega^+<1$ and that $\alpha_c\in(0,1]$ satisfies $\alpha_c<\frac{\sqrt{(\sigma-\beta_l^+)^2+4\beta_l^+(\gamma^+)^2}+\sigma-\beta_l^+}{2(\gamma^+)^2}$. Then
\[
\begin{aligned}
\beta_l^{++}\mu^{++}
\leq\tau^{++}\kappa^{++}\leq\beta_u^{++}\mu^{++},
\end{aligned}
\]
where $$
\begin{aligned}
\beta_l^{++}&:=\left(1+\frac{(\alpha_c\gamma^+)^2}{\nu}\right)^{-1}\left[(1-\alpha_c)\beta_l^+-\alpha_c^2(\gamma^+)^2{ +\sigma\alpha_c}\right],\\ \beta_u^{++}&:=\left(1-\frac{(\alpha_c\gamma^+)^2}{\nu}\right)^{-1}\left[(1-\alpha_c)\beta_u^++\alpha_c^2\theta+\sigma\alpha_c\right].
\end{aligned}
$$
Furthermore, $\tau^{++}>0$ and $\kappa^{++}>0$.
\end{theorem}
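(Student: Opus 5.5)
The plan follows the predictor analysis of Theorem~\ref{thm:bound of etatau}, with Lemma~\ref{lemma:bound of dtdk} in place of \eqref{eq:bound of tkp}. I would first expand the product of the homogeneous scalars:
\[
\tau^{++}\kappa^{++}=\tau^+\kappa^+ +\alpha_c(\kappa^+\Delta\tau_c+\tau^+\Delta\kappa_c)+\alpha_c^2\Delta\tau_c\Delta\kappa_c .
\]
By \eqref{eq:CorrSystemW b} the middle term equals $\alpha_c(\sigma\mu^+-\tau^+\kappa^+)$. Hence
\[
\tau^{++}\kappa^{++}=(1-\alpha_c)\tau^+\kappa^+ +\sigma\alpha_c\mu^+ +\alpha_c^2\Delta\tau_c\Delta\kappa_c .
\]

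For the lower bound, Lemma~\ref{eq:lemma bound of xs_c}(i) gives $\Delta\tau_c\Delta\kappa_c=-\langle\Delta x_c,\Delta s_c\rangle$. Cauchy--Schwarz in the $W^+$-norm together with Lemma~\ref{eq:lemma bound of xs_c}(ii) then bounds this term below by $-(\gamma^+)^2\mu^+$. Using also $\tau^+\kappa^+\ge\beta_l^+\mu^+$, we obtain
\[
\tau^{++}\kappa^{++}\ge\bigl[(1-\alpha_c)\beta_l^+-\alpha_c^2(\gamma^+)^2+\sigma\alpha_c\bigr]\mu^+ .
\]
The stated restriction on $\alpha_c$ is exactly the positive root condition for the quadratic $(\gamma^+)^2\alpha^2-(\sigma-\beta_l^+)\alpha-\beta_l^+<0$, so the bracket is strictly positive. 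To convert $\mu^+$ into $\mu^{++}$, I would use the upper estimate in \eqref{ineq:mu pp}, $\mu^{++}\le(1+(\alpha_c\gamma^+)^2/\nu)\mu^+$. Since the bracket is positive, this gives the claimed $\beta_l^{++}$.

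For the upper bound, Lemma~\ref{lemma:bound of dtdk} gives $\Delta\tau_c\Delta\kappa_c\le\theta\mu^+$, and $\tau^+\kappa^+\le\beta_u^+\mu^+$. Together these yield
\[
\tau^{++}\kappa^{++}\le\bigl[(1-\alpha_c)\beta_u^++\alpha_c^2\theta+\sigma\alpha_c\bigr]\mu^+ .
\]
Dividing by the lower estimate in \eqref{ineq:mu pp} gives the claimed $\beta_u^{++}$. This step needs $1-(\alpha_c\gamma^+)^2/\nu>0$. I would obtain it from $\omega^+<1$: this gives $\alpha_c\gamma^+<\sqrt{1-\delta^+}\le1$, hence $(\alpha_c\gamma^+)^2<1\le\nu$.

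Positivity of $\tau^{++}$ and $\kappa^{++}$ follows by the same continuity argument as in Theorem~\ref{thm:bound of etatau}. The lower bound above holds with $\alpha_c$ replaced by any $\hat\alpha\in(0,\alpha_c]$, because the quadratic condition is preserved for smaller step lengths. So $(\tau^++\hat\alpha\Delta\tau_c)(\kappa^++\hat\alpha\Delta\kappa_c)>0$ throughout, and neither factor can cross zero.

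The main point needing care is this uniformity in $\hat\alpha$ of the lower bound. It requires checking that the admissible set for the quadratic inequality is an interval starting at $0$, which holds since the quadratic is negative at $\alpha=0$. Everything else is routine substitution.
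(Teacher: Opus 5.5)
Your proposal is correct and follows essentially the same argument as the paper. You use the expansion via \eqref{eq:CorrSystemW b}, Lemma~\ref{lemma:bound of dtdk} for the upper bound, Cauchy--Schwarz in the $W^+$-norm for the lower bound, \eqref{ineq:mu pp} to pass from $\mu^+$ to $\mu^{++}$, and the continuity argument of Theorem~\ref{thm:bound of etatau} for positivity. Your explicit checks that $(\alpha_c\gamma^+)^2<1\le\nu$ and that the quadratic condition holds for every $\hat\alpha\in(0,\alpha_c]$ simply make precise steps the paper states only briefly.
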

\begin{proof}
Expanding $\tau^{++}\kappa^{++}$ and using \eqref{eq:CorrSystemW b} gives 
\begin{equation}\label{eq:aux-eq-1}
\begin{aligned}
\tau^{++}\kappa^{++}
&=
\tau^+\kappa^+
+\alpha_c(\kappa^+\Delta\tau_c+\tau^+\Delta\kappa_c)
+\alpha_c^2\Delta\tau_c\Delta\kappa_c        \\
&=
(1-\alpha_c)\tau^+\kappa^+
+\sigma\alpha_c\mu^+
+\alpha_c^2\Delta\tau_c\Delta\kappa_c .
\end{aligned}
\end{equation}
Applying Lemma~\ref{lemma:bound of dtdk} and the upper bound
$\tau^+\kappa^+\le \beta_u^+\mu^+$ to \eqref{eq:aux-eq-1} yields
\[
\begin{aligned}
\tau^{++}\kappa^{++}
&\le
\left[
(1-\alpha_c)\beta_u^+
+\sigma\alpha_c
+\alpha_c^2\theta
\right]\mu^+        \\
&\le
\left(
1-\frac{(\alpha_c\gamma^+)^2}{\nu}
\right)^{-1}
\left[
(1-\alpha_c)\beta_u^+
+\sigma\alpha_c
+\alpha_c^2\theta
\right]\mu^{++}        \\
&=
\beta_u^{++}\mu^{++},
\end{aligned}
\]
where the second inequality uses the lower bound for $\mu^{++}$ in
Theorem~\ref{thm:gap_evolution_corrector}. This upper bound is valid since $1-(\alpha_c\gamma^+)^2/\nu>0$, which follows from $\omega^+<1$.

For the lower bound, Lemma~\ref{eq:lemma bound of xs_c} gives
$
        \Delta\tau_c\Delta\kappa_c
        =
        -\langle \Delta x_c,\Delta s_c\rangle .
$
Hence, by the Cauchy--Schwarz inequality in the $W^+$-norm,
\begin{equation}\label{eq:4.13aux-ineq-2}
        \Delta\tau_c\Delta\kappa_c
        \ge
        -\|\Delta x_c\|_{W^+}\|\Delta s_c\|_{W^+}^*
        \ge
        -(\gamma^+)^2\mu^+ .
\end{equation}
Combining \eqref{eq:4.13aux-ineq-2} with
$\tau^+\kappa^+\ge \beta_l^+\mu^+$ and using \eqref{eq:aux-eq-1}, we obtain
\[
\begin{aligned}
\tau^{++}\kappa^{++}
&\ge
\left[
(1-\alpha_c)\beta_l^+
+\sigma\alpha_c
-(\alpha_c\gamma^+)^2
\right]\mu^+        \\
&\ge
\left[
(1-\alpha_c)\beta_l^+
+\sigma\alpha_c
-(\alpha_c\gamma^+)^2
\right]
\left(
1+\frac{(\alpha_c\gamma^+)^2}{\nu}
\right)^{-1}
\mu^{++}        \\
&=
\beta_l^{++}\mu^{++},
\end{aligned}
\]
where the second inequality uses the upper bound for $\mu^{++}$ in
Theorem~\ref{thm:gap_evolution_corrector}. The coefficient $(1-\alpha_c)\beta_l^+
+\sigma\alpha_c
-(\alpha_c\gamma^+)^2$ is positive under the step-length condition $\alpha_c\in(0,\frac{\sqrt{(\sigma-\beta_l^+)^2+4\beta_l^+(\gamma^+)^2}+\sigma-\beta_l^+}{2(\gamma^+)^2})$.

Finally, the step-length condition ensures that $\tau^{++}\kappa^{++}>0$. The same argument as in the proof of \Cref{thm:bound of etatau} shows that $\tau^{++}>0$ and $\kappa^{++}>0$.
\end{proof}

\subsection{Complexity results}
We now combine the predictor and corrector analyses to establish the main complexity results. The following theorem demonstrates that a fixed choice of algorithmic parameters ensures that the iterate returns to the initial neighborhood after each iteration.

\begin{theorem}\label{thm:guarantee neighborhood}
Fix the constants
\[
\eta=0.02,\quad \beta_l=0.9,\quad \beta_u=0.905,\quad \omega=0.005,\quad \alpha_c=0.85,\quad \sigma=0.9025.
\]
Let $\delta=\eta^2+2\eta$ and define
$\gamma=
\sqrt{\frac{(\eta+\sqrt{\nu})^2}{1-\delta}
+\frac{\beta_u}{2}}$ as in \Cref{eq:lemma bound of xs}.
Choose
$\alpha_p=\frac{\omega\sqrt{1-\delta}}{\gamma}$
so that the condition in \Cref{thm:neighborhood of predictor} is satisfied.
Let $z\in \mathcal{N}(\eta,\beta_l,\beta_u)$. Then:
\begin{enumerate}
    \item[(i)] The predictor point satisfies
    \[
    z^{+}\in 
    \mathcal{N}(0.03518,\,0.89995,\,0.90503).
    \]
    \item[(ii)] The corrected point satisfies
    \[
    z^{++}\in 
    \mathcal{N}(0.01665,\,0.90028,\,0.90376).
    \]
    Consequently, $z^{++}\in\mathcal{N}(\eta,\beta_l,\beta_u).$
\end{enumerate}
\end{theorem}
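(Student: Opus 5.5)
The proof is a direct numerical instantiation of the predictor and corrector theorems: Theorems~\ref{thm:neighborhood of predictor} and \ref{thm:bound of etatau} give part (i), and Theorems~\ref{Thm:Hessian bound corrector}, \ref{Them:corrector-prox} and \ref{Thm:corrector-scale-bound} give part (ii). All constants must be shown to hold uniformly in $\nu\ge1$, and the dependence on $\nu$ enters only through $\gamma$, $\alpha_p$ and the factors $1/\nu$.

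\textbf{Predictor step.} With $\eta=0.02$ we get $\delta=0.0404<1$ and $\eta<\sqrt2-1$, so Theorem~\ref{Thm:Hessian bound} applies. The choice $\alpha_p=\omega\sqrt{1-\delta}/\gamma$ gives exactly $\omega=\alpha_p\gamma/\sqrt{1-\delta}=0.005$, so $\omega<1$ and $1-\omega-(1-\delta)\omega^2>0$. Since $\eta^+$ in Theorem~\ref{thm:neighborhood of predictor} depends only on $(\eta,\delta,\omega)$, evaluating it numerically should give $\eta^+\le0.03518<1$; this yields $x^+\in\intK$, $s^+\in\operatorname{int}(\K^*)$ and the proximity bound.

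For the $\tau\kappa$ bounds, $\beta_l^+$ and $\beta_u^+$ in Theorem~\ref{thm:bound of etatau} depend on $\alpha_p$ and on $\alpha_p^2\gamma^2=(1-\delta)\omega^2$, which is a constant. Since $\gamma\ge\sqrt{\nu/(1-\delta)}\ge1.02$, we have $\alpha_p\in(0,\alpha_{\max}]$ with $\alpha_{\max}=(1-\delta)\omega\approx0.0048$. I will show that $\beta_l^+\ge0.89995$ and $\beta_u^+\le0.90503$ for all $\alpha_p$ in this range. The idea is to write
\[
\beta_l^+=\frac{(1-\alpha_p)\beta_l-c}{1-\alpha_p+c},\qquad c=(1-\delta)\omega^2,
\]
and treat it as a monotone function of $\alpha_p$, then check the endpoint $\alpha_p=\alpha_{\max}$; $\beta_u^+$ is handled similarly. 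The step-length condition of Theorem~\ref{thm:bound of etatau} becomes $\gamma^2\alpha_p^2+\beta_l\alpha_p<\beta_l$, which is trivial here. Since membership in the neighborhood only requires $\beta_l^+\mu^+\le\tau^+\kappa^+\le\beta_u^+\mu^+$, replacing the exact $\beta_l^+,\beta_u^+$ by the rounded bounds is legitimate.

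\textbf{Corrector step.} Taking $(\eta^+,\beta_l^+,\beta_u^+)=(0.03518,0.89995,0.90503)$, I will compute $\delta^+=(\eta^+)^2+2\eta^+\approx0.0716$ and then $\theta$ from Lemma~\ref{lemma:bound of dtdk} with $\sigma=0.9025$; $\theta$ is tiny, of order $10^{-6}$. Next I compute $\gamma^+$ from Lemma~\ref{eq:lemma bound of xs_c}, $\omega^+=\alpha_c\gamma^+/\sqrt{1-\delta^+}$ and $\tilde\eta$. All of these are $\nu$-free. Substituting into $\eta^{++}$ of Theorem~\ref{Them:corrector-prox} should give $\eta^{++}\le0.01665$.

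The bounds $\beta_l^{++},\beta_u^{++}$ of Theorem~\ref{Thm:corrector-scale-bound} contain the factors $(1\pm(\alpha_c\gamma^+)^2/\nu)^{-1}$. I will bound these using only $\nu\ge1$: the worst case for each side is $\nu=1$ or $\nu\to\infty$ according to the sign of the bracket. The step-length condition on $\alpha_c$ and $\sigma\ge\beta_l^+$ are then checked directly.

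Finally, the inclusion $\mathcal N(0.01665,0.90028,0.90376)\subseteq\mathcal N(0.02,0.9,0.905)$ is immediate, because the neighborhood is monotone in $\eta$ and in the interval $[\beta_l,\beta_u]$.

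\textbf{Main obstacle.} The delicate part is the corrector computation. The term $(1-\alpha_c+\delta^+)\omega^+/(\alpha_c(1-\omega^+))$ and $(1-\alpha_c)\tilde\eta$ must together stay below roughly $0.0166$, so $\gamma^+$ has to be small; this in turn needs $\eta^+$ and $\theta$ small, which is exactly why $\sigma$ is placed between $\beta_l^+$ and $\beta_u^+$. I expect this to be tight. I would verify it carefully with interval-style rounding, always rounding up quantities that increase $\eta^{++}$, and handle the $\nu$-dependence of $\beta^{++}_{l,u}$ by taking extremes over $\nu\in[1,\infty)$.
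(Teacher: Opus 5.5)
Your plan is correct. Its structure matches the paper's: instantiate the predictor theorems, then the corrector theorems, uniformly in $\nu\ge1$. Where you differ is in how you handle uniformity in $\nu$.

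\textbf{The paper's route.} It keeps the exact predictor outputs $\beta_l^+(\nu)$ and $\beta_u^+(\nu)$ as functions of $\nu$. It uses that $\alpha_p(\nu)$ is decreasing, with $\alpha_p(1)<0.0039512$, and checks $\beta_l^+(\nu)<\sigma<\beta_u^+(\nu)$ for every $\nu$. It then proves that $\theta$, $\gamma^+$, $\omega^+$, $\eta^{++}$ and $\beta_u^{++}$ decrease and $\beta_l^{++}$ increases in $\nu$. For $\theta$ this uses the monotonicity of $(v-\sigma)^2/(4v)$ on each side of $\sigma$. Everything is finally evaluated at $\nu=1$.

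\textbf{Your route.} You use the cruder bound $\alpha_p\le(1-\delta)\omega=0.004798$. More substantively, you feed the rounded part-(i) constants $(0.03518,0.89995,0.90503)$ into the corrector results. This is legitimate, because those results hold for an arbitrary neighborhood $\mathcal N(\eta^+,\beta_l^+,\beta_u^+)$ with $\sigma\ge\beta_l^+$. It makes $\theta,\gamma^+,\omega^+,\eta^{++}$ genuine constants. Only the explicit $1/\nu$ factors in $\beta_l^{++}$ and $\beta_u^{++}$ remain, and since both brackets are positive, $\nu=1$ is the worst case.

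\textbf{What each buys.} Your modular route removes all the monotonicity-in-$\nu$ bookkeeping. The cost is that it uses up almost all the slack.
\begin{itemize}
\item At $\alpha_p=0.004798$ you get $\beta_l^+\approx0.8999542$ and $\beta_u^+\approx0.9050270$. These meet $0.89995$ and $0.90503$ by about $4\times10^{-6}$ and $3\times10^{-6}$.
\item With the rounded inputs, $\delta^+\approx0.0715976$, $\theta\approx1.806\times10^{-6}$, $\gamma^+\approx0.0365608$ and $\omega^+\approx0.0322527$.
\item These give $\eta^{++}\approx0.0166474$, $\beta_l^{++}\approx0.9002823$ and $\beta_u^{++}\approx0.9037536$. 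The targets $0.01665$, $0.90028$ and $0.90376$ are met with margins of roughly $2.6\times10^{-6}$, $2.3\times10^{-6}$ and $6.4\times10^{-6}$.
\item The paper's exact tracking leaves more room: $\eta^{++}(1)<0.0166403$, $\beta_l^{++}(1)>0.9002841$ and $\beta_u^{++}(1)<0.9037524$.
\end{itemize}
Your planned directed (interval-style) rounding is therefore essential rather than a precaution. With it, your argument goes through.
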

\begin{proof}
The proof is given in Appendix~\ref{secA2}. The code for reproducing the numerical bounds is available at \url{https://github.com/wenhfu/IPM-CF-check}.
\end{proof}

With neighborhood preservation established, we can now deduce the main complexity result. The analysis implies that our method achieves an $\mathcal{O}\left(\sqrt{\nu}\log(1/\varepsilon)\right)$ iteration bound.
\begin{theorem}
Let $z^{(0)}\in\mathcal N(\eta,\beta_l,\beta_u)$ be the initial point constructed in Section~\ref{subsec:initialization} and fix the parameter choice in Theorem~\ref{thm:guarantee neighborhood}. For any $\varepsilon \in (0,1)$, Algorithm~\ref{alg:pd_ipm} terminates in $\mathcal{O}\left(\sqrt{\nu}\log(1/\varepsilon)\right)$ iterations, yielding an iterate $z^{(N)}$ such that
$$\bar{\mu}^{(N)} \leq \varepsilon\,\bar\mu^{(0)} \quad \text{and} \quad \|r(z^{(N)})\| \leq \varepsilon\,\|r(z^{(0)})\|.$$
\end{theorem}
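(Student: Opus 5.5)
By induction on the iteration counter $k$, Theorem~\ref{thm:guarantee neighborhood} applied at every step shows that each iterate $z^{(k)}$ lies in $\mathcal N(\eta,\beta_l,\beta_u)$. The base case is the construction in Section~\ref{subsec:initialization}. Consequently, every iteration uses the same fixed $\alpha_p=\omega\sqrt{1-\delta}/\gamma$ and $\alpha_c=0.85$, and all hypotheses of the predictor and corrector results hold with the numerical neighborhood parameters of Theorem~\ref{thm:guarantee neighborhood}. In particular, $\beta_l^+=0.89995$ and $\sigma=0.9025\ge\beta_l^+$.

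\textbf{Linear residual.} Lemma~\ref{lem:feas_reduction} gives $r(z^+)=(1-\alpha_p)r(z)$, and Lemma~\ref{lem:r-corr} gives $r(z^{++})=r(z^+)$. Hence $\|r(z^{(k)})\|=(1-\alpha_p)^k\|r(z^{(0)})\|$.

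\textbf{Complementarity.} Theorem~\ref{thm:gap_evolution} gives $\bar\mu^+=(1-\alpha_p)\bar\mu$, and \eqref{ineq:barmu pp} gives $\bar\mu^{++}\le\bigl(1+\alpha_c(\sigma-\beta_l^+)/\nu\bigr)\bar\mu^+$. Combining them,
\[
\bar\mu^{(k+1)}\le (1-\alpha_p)\Bigl(1+\tfrac{c_0}{\nu}\Bigr)\bar\mu^{(k)},
\quad c_0:=\alpha_c(\sigma-\beta_l^+)=0.85\times 0.00255\approx 0.0021675 .
\]

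\textbf{Order of $\alpha_p$.} Since $\nu\ge1$ and $\eta,\beta_u$ are fixed,
\[
\gamma^2=\frac{(\eta+\sqrt\nu)^2}{1-\delta}+\frac{\beta_u}{2}\le C\nu ,
\]
so $\alpha_p\ge c_1/\sqrt\nu$ with $c_1=\omega\sqrt{(1-\delta)/C}$, a constant independent of $\nu$. Here $C$ is explicit: $(\eta+\sqrt\nu)^2\le(1+\eta)^2\nu$ and $\beta_u/2\le \beta_u\nu/2$.

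\textbf{Main obstacle.} The crux is the inequality
\[
(1-\alpha_p)\Bigl(1+\tfrac{c_0}{\nu}\Bigr)\le 1-\tfrac{c_2}{\sqrt\nu}
\]
for some constant $c_2>0$. This means the corrector's possible increase of $\bar\mu$, which is of order $1/\nu$, must be dominated by the predictor's decrease, which is of order $1/\sqrt\nu$.

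First expand
\[
(1-\alpha_p)(1+c_0/\nu)\le 1-\alpha_p+c_0/\nu\le 1-\frac{c_1}{\sqrt\nu}+\frac{c_0}{\sqrt\nu}.
\]
This requires $c_1>c_0$. With $\omega=0.005$ and $\delta\approx0.0404$ we get $\sqrt{1-\delta}\approx0.98$. Bounding $\gamma\le\sqrt{\nu}\,\bigl((1+\eta)^2/(1-\delta)+\beta_u/2\bigr)^{1/2}\approx1.22\sqrt\nu$ gives $c_1\approx0.005\times0.98/1.22\approx0.0040>c_0\approx0.0022$. This yields $c_2=c_1-c_0>0$, roughly $0.0018$. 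I would check these constants carefully, since the margin is thin.

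\textbf{Counting iterations.} It follows that $\bar\mu^{(k)}\le(1-c_2/\sqrt\nu)^k\bar\mu^{(0)}$, and also $\|r(z^{(k)})\|\le(1-c_1/\sqrt\nu)^k\|r(z^{(0)})\|\le(1-c_2/\sqrt\nu)^k\|r(z^{(0)})\|$. Using $\log(1-t)\le -t$, both quantities fall below $\varepsilon$ times their initial values once
\[
k\ge \frac{\sqrt\nu}{c_2}\log\frac1\varepsilon ,
\]
which is $\mathcal O(\sqrt\nu\log(1/\varepsilon))$.
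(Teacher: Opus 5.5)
Your proposal is correct and follows the paper's proof essentially step for step. It combines $\bar\mu^+=(1-\alpha_p)\bar\mu$ with the corrector bound \eqref{ineq:barmu pp} at $\beta_l^+=0.89995$, uses the estimate $\gamma^2\le\bigl((1+\eta)^2/(1-\delta)+\beta_u/2\bigr)\nu$ to get $\alpha_p\ge c_1/\sqrt\nu$, and absorbs $c_0/\nu$ into $c_0/\sqrt\nu$, with your $c_1,c_0,c_2$ coinciding with the paper's $c_p,c_\mu,\xi$. One small numerical correction: $\bigl((1+\eta)^2/(1-\delta)+\beta_u/2\bigr)^{1/2}\approx1.240$ rather than $1.22$, so $c_1\approx0.0039511$, which still exceeds $c_0=0.0021675$ and gives $c_2\approx0.0017836$.
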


\begin{proof}
We first introduce several constants used in the following analysis.  Let $\widehat\beta_l^+:=0.89995 $ denote the uniform lower bound for $\beta_l^+$ over all $\nu\ge 1$. Define
\[
        c_p
        :=
        \frac{\omega(1-\delta)}
        {\sqrt{(1+\eta)^2+(1-\delta)\beta_u/2}}>0.0039511,
        \quad
        c_\mu
        :=
        \alpha_c(\sigma-\widehat\beta_l^+)=0.0021675,
\]
and set $\xi:=c_p-c_\mu>0.0017836.$ Thus $\xi$ is an absolute constant independent of $\nu$.

We next derive a lower bound on the predictor step length. Recall that $\alpha_p=\frac{\omega\sqrt{1-\delta}}{\gamma} .$ Since $\nu\ge1$, we have $(\eta+\sqrt{\nu})^2\le(1+\eta)^2\nu.$ Together with the definition of $\gamma$, this yields
\begin{equation}\label{eq:thm14-aux-1}
\begin{aligned}
\gamma^2
=
\frac{(\eta+\sqrt{\nu})^2}{1-\delta}
+
\frac{\beta_u}{2}                                      
\le
\frac{(1+\eta)^2+(1-\delta)\beta_u/2}{1-\delta}\,\nu .
\end{aligned}
\end{equation}
Substituting \eqref{eq:thm14-aux-1} into the formula for $\alpha_p$ gives
$\alpha_p\ge\frac{c_p}{\sqrt{\nu}}.$

We then estimate the decrease of $\bar\mu^{(k+1)}$ at the $k$-th iteration ($k\ge0$).
Theorems~\ref{thm:gap_evolution} and~\ref{thm:gap_evolution_corrector} imply that one predictor--corrector iteration satisfies
\[
\begin{aligned}
\bar\mu^{(k+1)}
&\le
\left(
1+\frac{\alpha_c(\sigma-\beta_l^+)}{\nu}
\right)
(1-\alpha_p)\bar\mu^{(k)} .
\end{aligned}
\]
Since $\beta_l^+\ge \widehat\beta_l^+$, the last
inequality implies
\begin{equation}\label{eq:thm15-aux-2}
\begin{aligned}
\bar\mu^{(k+1)}
\le
\left(
1+\frac{c_\mu}{\nu}
\right)
(1-\alpha_p)\bar\mu^{(k)}
\le
\left(
1-\alpha_p+\frac{c_\mu}{\nu}
\right)\bar\mu^{(k)}.
\end{aligned}
\end{equation}

Using the lower bound $\alpha_p\ge c_p/\sqrt{\nu}$ and $1/\nu\le 1/\sqrt{\nu}$, \eqref{eq:thm15-aux-2} becomes
\begin{equation}\label{eq:thm15-aux-3}
\begin{aligned}
\bar\mu^{(k+1)}
\le
\left(
1-\alpha_p+\frac{c_\mu}{\nu}
\right)\bar\mu^{(k)}                                      
\le
\left(
1-\frac{c_p-c_\mu}{\sqrt{\nu}}
\right)\bar\mu^{(k)}                                      
=
\left(
1-\frac{\xi}{\sqrt{\nu}}
\right)\bar\mu^{(k)} .
\end{aligned}
\end{equation}

The linear residual is contracted by the predictor step and preserved by the
corrector step. By Lemmas~\ref{lem:feas_reduction} and~\ref{lem:r-corr},
\begin{equation*}\label{eq:mu-decay}
        \|r(z^{(k+1)})\|
        \le
        (1-\alpha_p)\|r(z^{(k)})\| .
\end{equation*}
Since $\xi\le c_p$, the bound $\alpha_p\geq c_p/\sqrt{\nu}$ also gives
\begin{equation}\label{eq:r-decay}
        \|r(z^{(k+1)})\|
        \le
        \left(
        1-\frac{\xi}{\sqrt{\nu}}
        \right)
        \|r(z^{(k)})\| .
\end{equation}
Repeated application of \eqref{eq:thm15-aux-3} and \eqref{eq:r-decay} over $N$ predictor--corrector iterations yields
\[
        \bar\mu^{(N)}
        \le
        \left(
        1-\frac{\xi}{\sqrt{\nu}}
        \right)^N
        \bar\mu^{(0)},
        \quad
        \|r(z^{(N)})\|
        \le
        \left(
        1-\frac{\xi}{\sqrt{\nu}}
        \right)^N
        \|r(z^{(0)})\| .
\]
Using the standard estimate $(1-t)^N\le e^{-tN}$ for $t\in(0,1)$, it is
sufficient to choose
$
N\ge\frac{\sqrt{\nu}}{\xi}\log(1/\varepsilon).
$
This choice guarantees
\[
\bar\mu^{(N)}\le
\varepsilon\bar\mu^{(0)},
\quad
\|r(z^{(N)})\|\le\varepsilon\|r(z^{(0)})\| .
\]
The stated iteration bound follows.
\end{proof}

This result improves upon the $\mathcal{O}\left({\nu}\log(1/\varepsilon)\right)$ bound established by Badenbroek and Dahl \cite{badenbroek2022algorithm} for multi-secant BFGS scaling, and achieves the best-known complexity order for interior-point methods applied to nonsymmetric conic optimization. 

\section{Numerical experiments}\label{sec5}

In this section, we evaluate the proposed method, denoted by IPM-CF, on two families of nonsymmetric conic optimization problems. The first family is based on the operator perspective epigraph (OPE) cone
\[
\K_{\rm OPE}
=
\operatorname{cl}
\left\{
(T,X,Y)\in{\mathbb S}^n\times{\mathbb S}^n_{++}\times{\mathbb S}^n_{++}
:
T\succeq X^{1/2}[-\log(X^{-1/2}YX^{-1/2})]X^{1/2}
\right\},
\]
and the second family is based on the quantum relative entropy (QRE) cone
\[
\K_{\rm QRE}
=
\operatorname{cl}
\left\{
(t,X,Y)\in{\mathbb R}\times{\mathbb S}^n_{++}\times{\mathbb S}^n_{++}
:
t\ge \operatorname{tr}\bigl(X\log X-X\log Y\bigr)
\right\}.
\]
These cones are suitable for evaluating the proposed method because their barriers involve matrix logarithms, whose derivative evaluations can become expensive as the matrix dimension increases. We compare IPM-CF with QICS, which is designed for conic models arising in quantum information and provides direct support for both cone families. Several general-purpose nonsymmetric conic solvers, including Alfonso and Clarabel, do not directly support these formulations. We therefore use QICS as the main competing solver.

All experiments were performed on a desktop computer with an Intel Core i7-12700K CPU at 3.60 GHz and 16 GB of RAM under Windows 11. Both solvers were run with the same target accuracy. A run was declared successful when the relative duality gap, the relative primal infeasibility, and the relative dual infeasibility were all below $10^{-8}$. For each cone family and each matrix-dimension group, we generated a set of random feasible instances. The data were generated so that strictly feasible primal and dual points were available, and the right-hand sides and objective vectors were then constructed from these points. In total, the test set contains 80 OPE instances and 110 QRE instances.

\begin{figure}[h]
    \centering
    \includegraphics[width=0.98\textwidth]{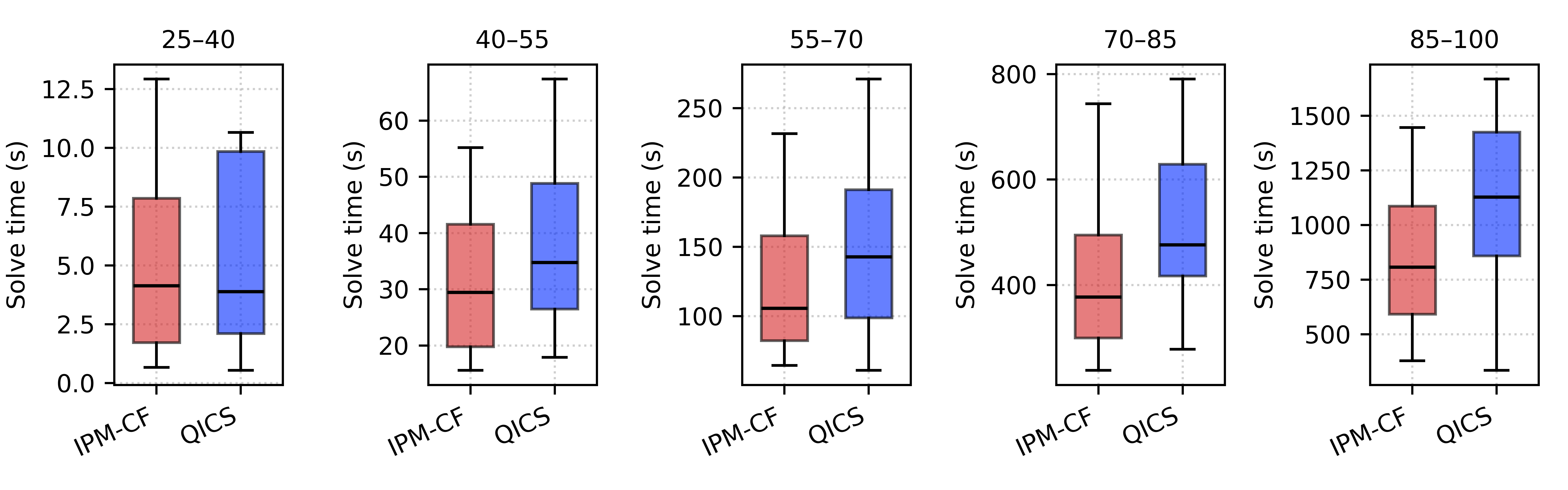}
    \caption{Running-time boxplots for OPE instances grouped by the matrix dimension.}
    \label{fig:ope_box_size}
\end{figure}
Figure~\ref{fig:ope_box_size} reports the running-time boxplots for the OPE instances. The results are grouped by the matrix dimension and show that IPM-CF generally has a smaller median running time than QICS. This advantage becomes more visible in the groups with larger matrix dimensions, where matrix logarithms and their derivatives are more expensive to evaluate. These results indicate that IPM-CF is typically faster on the tested instances.

\begin{figure}[h]
    \centering
    \includegraphics[width=0.98\textwidth]{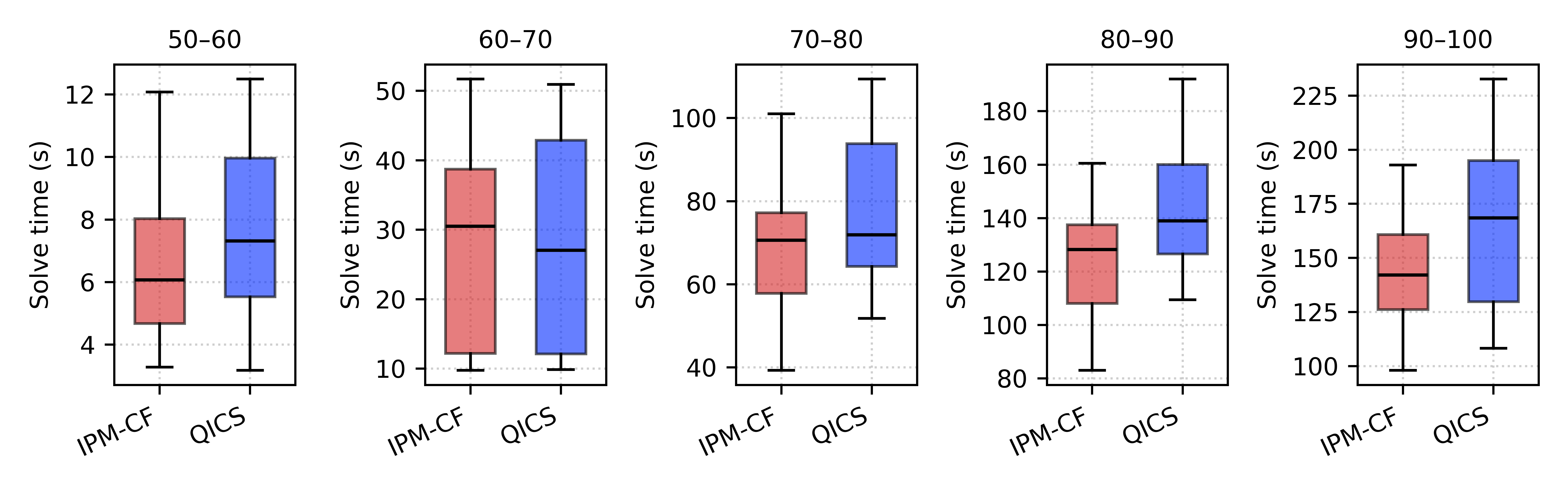}
    \caption{Running-time boxplots for QRE instances grouped by the matrix dimension.}
    \label{fig:qre_large_size}
\end{figure}
Figure~\ref{fig:qre_large_size} shows the running times for QRE instances. The comparison indicates that IPM-CF remains consistently competitive with QICS on this test set. In most dimension groups, the median running time of IPM-CF is lower, and the difference becomes more apparent for the higher-dimensional groups. This suggests that IPM-CF is also effective on the QRE instances.

\begin{figure}[!h]
\centering
\includegraphics[width=0.48\textwidth]{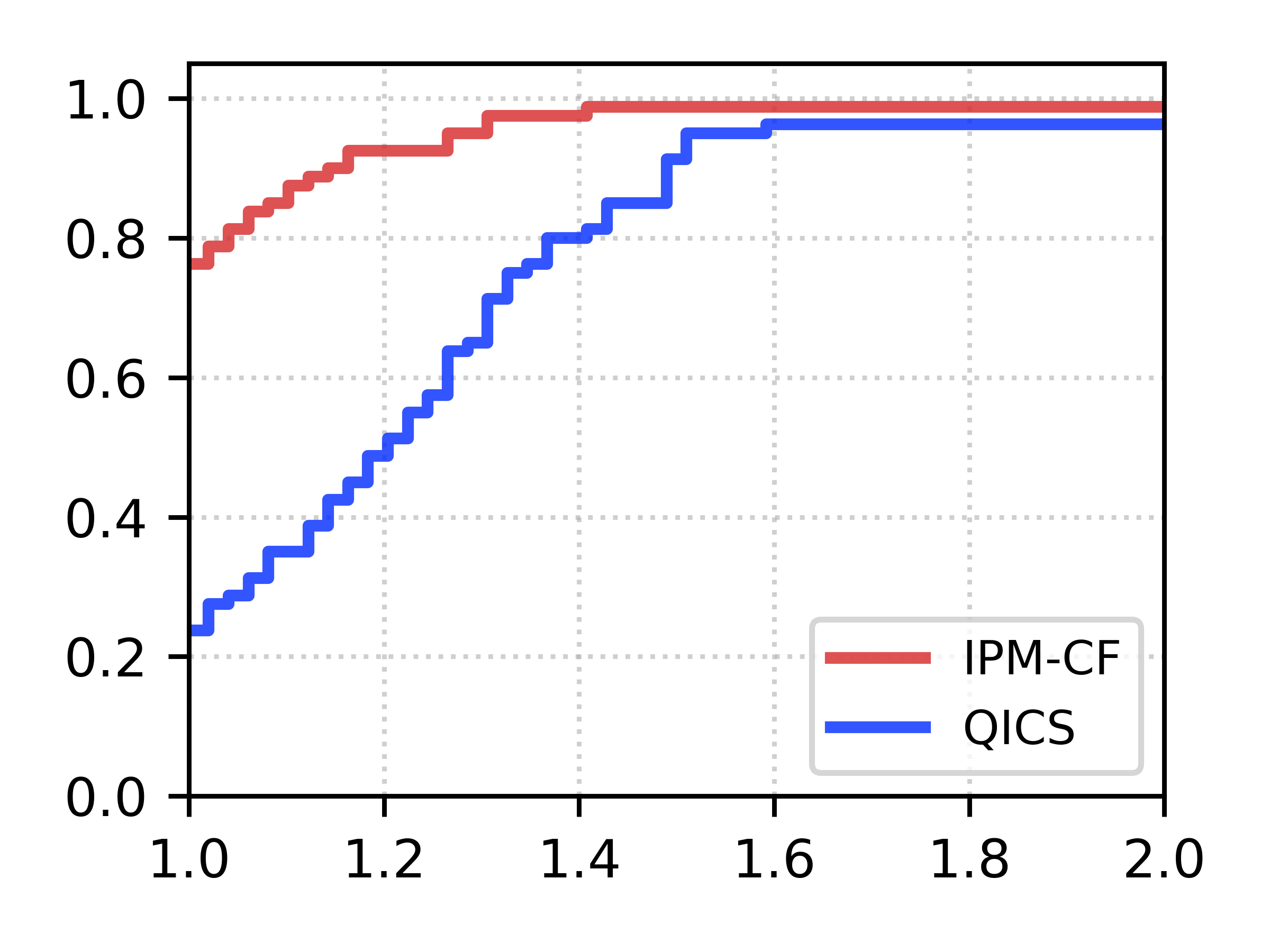}
\hfill 
\includegraphics[width=0.48\textwidth]{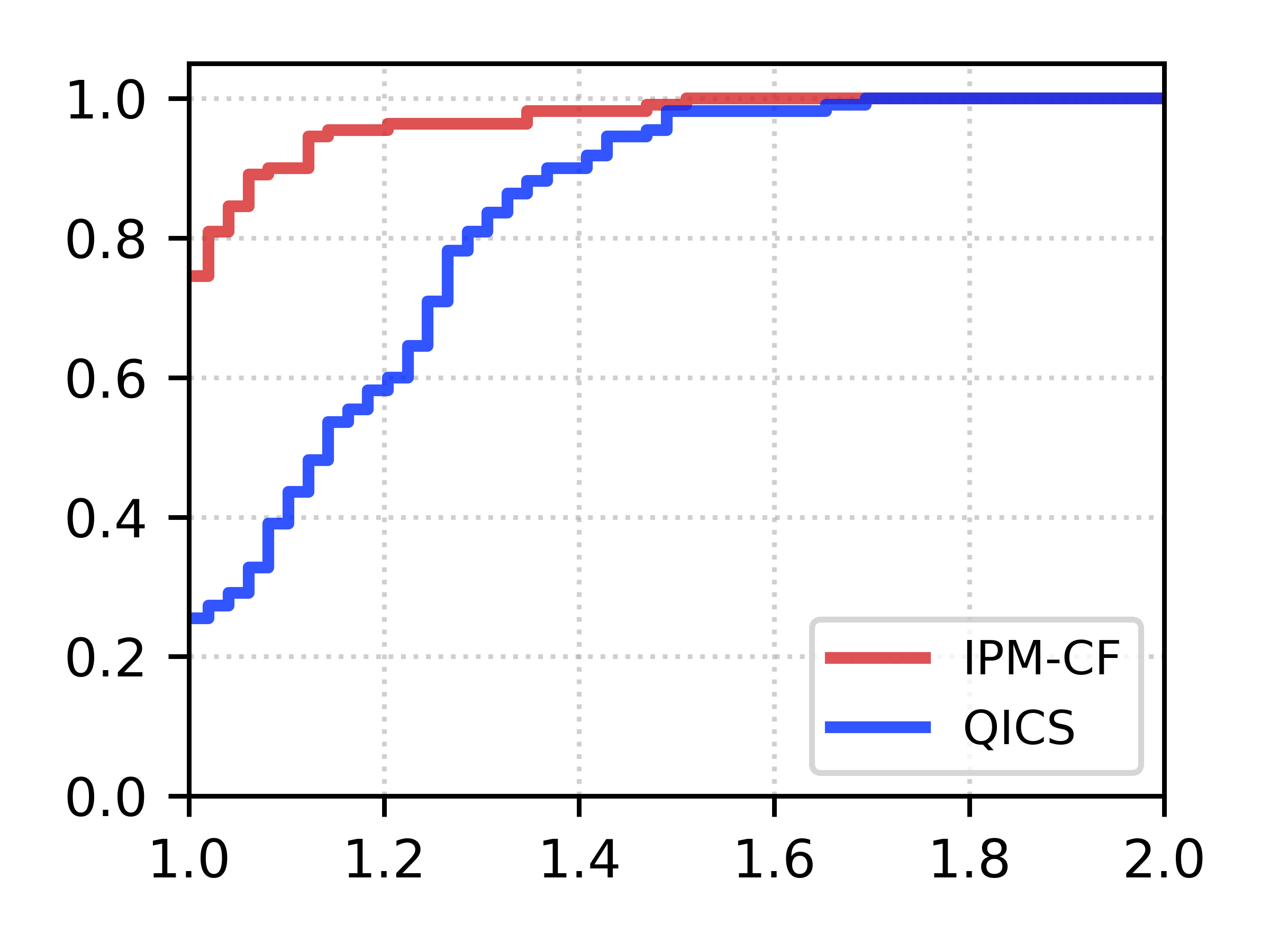}
\caption{Performance profiles based on total running time. Left: OPE instances. Right: QRE instances.}
\label{fig:performance profile}
\end{figure}
Figure~\ref{fig:performance profile} presents the performance profiles \cite{dolan2002benchmarking} based on total running time for the OPE and QRE instances. On both test sets, IPM-CF achieves a higher proportion of fastest solves, meaning it outperforms QICS on a larger fraction of instances. Its curves also remain above those of QICS over most of the displayed range, indicating that IPM-CF is not only more frequently the fastest solver but also competitive on the remaining instances.

\section{Conclusion}\label{sec6}
In this paper, we have proposed and analyzed a primal--dual interior-point method for nonsymmetric conic optimization. By avoiding conjugate-barrier derivatives, the proposed method removes a major computational bottleneck in high-dimensional nonsymmetric cones, where such derivatives are often unavailable in closed form or too costly to evaluate. The resulting scaling matrix is incorporated into a homogeneous self-dual predictor--corrector framework. We have also introduced a split central-path neighborhood that controls the conic variables and the scalar homogeneous variables separately. Within this neighborhood, the scaling matrix is uniformly comparable to the primal barrier Hessian. This comparison bound allows us to control the predictor and corrector directions and establish neighborhood preservation. As a result, we have obtained a worst-case iteration bound of $\mathcal{O}(\sqrt{\nu}\log(1/\varepsilon))$. This improves the previous $\mathcal{O}(\nu\log(1/\varepsilon))$ bound for multi-secant BFGS scalings and matches the best-known complexity order for interior-point methods applied to nonsymmetric conic optimization. The numerical results further show that the proposed method is competitive with QICS on instances involving the operator perspective epigraph cone and the quantum relative entropy cone.

%\iffalse
\appendix
\section{Proof of Theorem~\ref{thm:inexact_newton}}\label{secA1}
\begin{proof}
Since $F$ is a self-concordant barrier and $\rho=\|x^+-x\|_x<1$,  Lemma~\ref{lemma:perturb result} implies that $x^+\in\mathbb{X}$.
Applying \eqref{eq:sc_hessian_comp} with $u=x^+$ gives
\begin{equation}\label{eq:h_inv_h_bound}
\|H_{x^+}^{-1}H_x\|_x^{\rm op} \le \frac{1}{(1-\rho)^2}.
\end{equation}
Let $\tilde u:=H_x^{-1}g_{x^+}$. Using $n(x^+)=-H_{x^+}^{-1}g_{x^+}$, we have
\[
\|n(x^+)\|_{x^+}^2=
\langle g_{x^+},H_{x^+}^{-1}g_{x^+}\rangle =
\langle H_x \tilde u,H_{x^+}^{-1}H_x \tilde u\rangle. 
\]
By the property of the induced operator norm \cite[Section~2.3.1]{golub2013matrix}, we have 
\begin{equation}\label{eq:matrix analysis} 
\|Tv\|_x \le \|T\|_x^{\rm op}\|v\|_x . 
\end{equation}
Applying \eqref{eq:matrix analysis} with $T=H_{x^+}^{-1}H_x$ and $v=\tilde u$, we obtain 
\[ 
\|(H_{x^+}^{-1}H_x)\tilde u\|_x \le \|H_{x^+}^{-1}H_x\|_x^{\rm op}\|\tilde u\|_x . 
\]
Therefore, by the Cauchy--Schwarz inequality in the $H_x$-inner product,
\[
\|n(x^+)\|_{x^+}^2
\le
\|\tilde u\|_x\,
\|(H_{x^+}^{-1}H_x) \tilde u\|_x  
\le
\|H_{x^+}^{-1}H_x\|_x^{\rm op}\,
\|\tilde u\|_x^2  
=
\|H_{x^+}^{-1}H_x\|_x^{\rm op}\,
\bigl(\|g_{x^+}\|_x^*\bigr)^2 .
\]
Taking square roots and using \eqref{eq:h_inv_h_bound} yields
\begin{equation}\label{eq:nplus_reduction}
\|n(x^+)\|_{x^+} \le \frac{\|g_{x^+}\|_x^*}{1-\rho}.
\end{equation}

It remains to bound $\|g_{x^+}\|_x^*$. By the integral form of Taylor's formula for the gradient, we have
\[
g_{x^+}=\Big(g_x+\alpha H_x\tilde n(x)\Big)
+\int_0^1\big(\nabla^2 F(x+t\alpha \tilde n(x))-H_x\big)\,\alpha \tilde n(x)\,dt.
\]
Taking the dual norm $\|\cdot\|_x^*$ and using the triangle inequality gives
\begin{equation}\label{eq:gplus_split}
\|g_{x^+}\|_x^*
\le
\|g_x+\alpha H_x\tilde n(x)\|_x^*
+\int_0^1\|(\nabla^2 F(x+t\alpha \tilde n(x))-H_x)\,\alpha \tilde n(x)\|_x^*\,dt.
\end{equation}

We bound the first term in \eqref{eq:gplus_split} using the relative approximation bound \eqref{eq:relative_H_approx}.
Since $W\tilde n(x)=-g_x$, we have $g_x+H_x\tilde n(x)=(H_x-W)\tilde n(x)$, and therefore
\[
g_x+\alpha H_x\tilde n(x)=(H_x-W)\tilde n(x)-(1-\alpha)H_x\tilde n(x).
\]
Consequently,
\begin{equation}\label{eq:first_term_triangle}
\begin{aligned}
\|g_x+\alpha H_x\tilde n(x)\|_x^*
&\le
\|(H_x-W)\tilde n(x)\|_x^*+(1-\alpha)\|H_x\tilde n(x)\|_x^*\\
&=
\|(H_x-W)\tilde n(x)\|_x^*+(1-\alpha)\|\tilde n(x)\|_x.
\end{aligned}
\end{equation}
From \eqref{eq:relative_H_approx} we obtain $-\delta H_x\preceq W-H_x\preceq \delta H_x$, equivalently
\[
\|H_x^{-1/2}(H_x-W)H_x^{-1/2}\|_2\le \delta.
\]
Thus, for any $u$,
\[
\|(H_x-W)u\|_x^*=\|H_x^{-1/2}(H_x-W)u\|_2
\le \delta\|H_x^{1/2}u\|_2=\delta\|u\|_x,
\]
and in particular $\|(H_x-W)\tilde n(x)\|_x^*\le \delta\|\tilde n(x)\|_x$.
Substituting this into \eqref{eq:first_term_triangle} yields
\begin{equation}\label{eq:first_term_bound}
\|g_x+\alpha H_x\tilde n(x)\|_x^*\le (1-\alpha+\delta)\,\|\tilde n(x)\|_x.
\end{equation}

We now bound the integral term in \eqref{eq:gplus_split}.
Using $\|u\|_x^*=\|H_x^{-1}u\|_x$ and \eqref{eq:matrix analysis},
\[
\begin{aligned}
\|(\nabla^2 F(x+t\alpha \tilde n(x))-H_x)\,\alpha \tilde n(x)\|_x^*
&=
\|(H_x^{-1}\nabla^2 F(x+t\alpha \tilde n(x))-I)\,\alpha \tilde n(x)\|_x\\
&\le
\alpha\|\tilde n(x)\|_x\cdot \|H_x^{-1}\nabla^2 F(x+t\alpha \tilde n(x))-I\|_x^{\rm op}.
\end{aligned}
\]
For each $t\in[0,1]$, we have $\|t\alpha \tilde n(x)\|_x=t\rho<1$, so the second inequality in \eqref{eq:sc_hessian_comp} gives
\[
\|H_x^{-1}\nabla^2 F(x+t\alpha \tilde n(x))-I\|_x^{\rm op}
\le
\frac{1}{(1-t\rho)^2}-1.
\]
Therefore,
\begin{small}
\begin{align}\label{eq:integral_bound}
\int_0^1\|(\nabla^2 F(x+t\alpha \tilde n(x))-H_x)\,\alpha \tilde n(x)\|_x^*\,dt
&\le
\alpha\|\tilde n(x)\|_x\int_0^1\left(\frac{1}{(1-t\rho)^2}-1\right)dt \nonumber\\
&=
\rho\left(\frac{1}{1-\rho}-1\right)
=
\frac{\rho^2}{1-\rho}.
\end{align}
\end{small}
Combining \eqref{eq:gplus_split}, \eqref{eq:first_term_bound}, and \eqref{eq:integral_bound} yields
\begin{small}
\begin{equation}\label{eq:gplus_bound}
\|g_{x^+}\|_x^*
\le
\frac{\rho^2}{1-\rho}
+(1-\alpha+\delta)\,\|\tilde n(x)\|_x.
\end{equation}
\end{small}
Finally, substituting \eqref{eq:gplus_bound} into \eqref{eq:nplus_reduction} gives
\begin{small}
\[
\|n(x^+)\|_{x^+}
\le
\frac{1}{1-\rho}\left(\frac{\rho^2}{1-\rho}+(1-\alpha+\delta)\|\tilde n(x)\|_x\right)
=
\left(\frac{\rho}{1-\rho}\right)^2+\frac{(1-\alpha+\delta)\|\tilde n(x)\|_x}{1-\rho},
\]
\end{small}
which is \eqref{eq:main_bound}.
\end{proof}

\section{Proof of Theorem~\ref{thm:guarantee neighborhood}}\label{secA2}
\begin{proof}
Recall that
\[
\eta=0.02,\quad
\beta_l=0.9,\quad
\beta_u=0.905,\quad
\omega=0.005,\quad
\alpha_c=0.85,\quad
\sigma=0.9025.
\]
Then $\delta=\eta^2+2\eta=0.0404.$ Throughout the proof, quantities that depend on $\nu$, such as $\gamma(\nu)$ and $\alpha_p(\nu)$, are viewed as functions of $\nu$, whereas $\eta^+$ and $\delta^+$ are independent of $\nu$.

We first verify the predictor estimates. By definition,
\[
\gamma(\nu)^2=\frac{(\eta+\sqrt{\nu})^2}{1-\delta}+\frac{\beta_u}{2},\qquad \alpha_p(\nu)=\frac{\omega\sqrt{1-\delta}}{\gamma(\nu)}.
\]
The function $\gamma(\nu)$ is increasing and $\alpha_p(\nu)$ is decreasing on $[1,\infty)$. Moreover,
\[
\alpha_p(\nu)^2\gamma(\nu)^2=\omega^2(1-\delta)=:q=2.399\times10^{-5},
\]
so $q$ is independent of $\nu$. In particular,
\[
0<\alpha_p(\nu)\leq\alpha_p(1)<0.0039511211.
\]
The conditions $\omega<1$ and $1-\omega-(1-\delta)\omega^2>0$ in \Cref{thm:neighborhood of predictor} follow from
\[
\omega=0.005,\qquad 1-\omega-(1-\delta)\omega^2=0.99497601.
\]
The predictor step-length condition in \Cref{thm:bound of etatau} is equivalent to
\[
(1-\alpha_p(\nu))\beta_l-\alpha_p(\nu)^2\gamma(\nu)^2>0.
\]
Using the monotonicity of $\alpha_p(\nu)$ and the identity $\alpha_p(\nu)^2\gamma(\nu)^2=q$, we obtain
\[
(1-\alpha_p(\nu))\beta_l-q\geq(1-\alpha_p(1))\beta_l-q>0.8964.
\]
Hence all predictor step-length conditions hold for every $\nu\geq1$.

The scalar homogeneous complementarity bounds satisfy
\[
\beta_l^+(\nu)=\frac{(1-\alpha_p(\nu))\beta_l-q}{1-\alpha_p(\nu)+q},\qquad
\beta_u^+(\nu)=\frac{(1-\alpha_p(\nu)+\alpha_p(\nu)^2/4)\beta_u}{1-\alpha_p(\nu)-q}.
\]
For fixed $q$, differentiating the right-hand sides with respect to $\alpha_p$ gives
\[
\frac{\text{d}\beta_l^+}{\text{d}\alpha_p}=-\frac{(1+\beta_l)q}{(1-\alpha_p+q)^2}<0,\qquad
\frac{\text{d}\beta_u^+}{\text{d}\alpha_p}=\frac{\beta_u(2-\alpha_p)(\alpha_p+2q)}{4(1-\alpha_p-q)^2}>0.
\]
Because $\alpha_p(\nu)$ is decreasing, the chain rule shows that $\beta_l^+(\nu)$ is increasing and $\beta_u^+(\nu)$ is decreasing. Therefore,
\[
\begin{aligned}
\beta_l^+(\nu)\geq\beta_l^+(1)>0.8999542392>0.89995,\\
\beta_u^+(\nu)\leq\beta_u^+(1)<0.9050253438<0.90503.
\end{aligned}
\]
Since $\eta^+$ is independent of $\nu$, direct substitution of the fixed parameters gives $\eta^+<0.03518.$ It follows that
\[
z^+\in\mathcal N(0.03518,0.89995,0.90503).
\]

We next verify the corrector estimates. Since $\alpha_p(\nu)\to0$ as $\nu\to\infty$, the preceding formulas give
\[
\begin{aligned}
\lim_{\nu\to\infty}\beta_l^+(\nu)=\frac{\beta_l-q}{1+q}<0.8999544201<\sigma,\\
\lim_{\nu\to\infty}\beta_u^+(\nu)=\frac{\beta_u}{1-q}>0.9050217114>\sigma.
\end{aligned}
\]
Together with the monotonicity of $\beta_l^+(\nu)$ and $\beta_u^+(\nu)$, this yields
\[
\beta_l^+(\nu)<\sigma<\beta_u^+(\nu)\qquad\text{for all }\nu\geq1.
\]
Thus, the condition $\sigma\geq\beta_l^+(\nu)$ in \Cref{thm:gap_evolution_corrector} is satisfied uniformly.

The value of $\delta^+$ is independent of $\nu$ and satisfies
\[
\delta^+=(\eta^+)^2+2\eta^+<0.0715757738<1.
\]
For fixed $\sigma$, we have
\[
\frac{\text{d}}{\text{d}v}\left(\frac{(v-\sigma)^2}{4v}\right)=\frac{(v-\sigma)(v+\sigma)}{4v^2}.
\]
Hence $(v-\sigma)^2/(4v)$ is decreasing on $(0,\sigma)$ and increasing on $(\sigma,\infty)$. Since $\beta_l^+(\nu)$ increases while remaining below $\sigma$, the quantity
$
\frac{(\beta_l^+(\nu)-\sigma)^2}{4\beta_l^+(\nu)}
$
is decreasing in $\nu$. Similarly, since $\beta_u^+(\nu)$ decreases while remaining above $\sigma$,
$
\frac{(\beta_u^+(\nu)-\sigma)^2}{4\beta_u^+(\nu)}
$
is also decreasing. Therefore,
\[
\theta(\nu)=\max\left\{\frac{(\beta_l^+(\nu)-\sigma)^2}{4\beta_l^+(\nu)},\frac{(\beta_u^+(\nu)-\sigma)^2}{4\beta_u^+(\nu)}\right\}
\]
is decreasing on $[1,\infty)$.

Since $\gamma^+(\nu)^2=\frac{(\eta^+)^2}{1-\delta^+}+2\theta(\nu),$ the function $\gamma^+(\nu)$ is decreasing. Consequently, $\omega^+(\nu)=\frac{\alpha_c\gamma^+(\nu)}{\sqrt{1-\delta^+}}$ is also decreasing. Hence their largest values are attained at $\nu=1$, where
\[
\theta(1)<1.801\times10^{-6},\quad \gamma^+(1)<0.0365492393,\quad \omega^+(1)<0.0322421531<1.
\]

After substituting $\omega^+(\nu)=\alpha_c\gamma^+(\nu)/\sqrt{1-\delta^+}$ into the definition of $\eta^{++}(\nu)$, every term on the right-hand side is nondecreasing with respect to $\gamma^+(\nu)$ whenever $\omega^+(\nu)<1$. Since $\gamma^+(\nu)$ is decreasing, $\eta^{++}(\nu)$ is decreasing. Therefore,
\[
\eta^{++}(\nu)\leq\eta^{++}(1)<0.016640281<0.01665.
\]

Since $\alpha_c=0.85\in(0,1]$, the condition
$
\alpha_c
<
\frac{
\sqrt{(\sigma-\beta_l^+)^2
+4\beta_l^+(\gamma^+)^2}
+\sigma-\beta_l^+
}{
2(\gamma^+)^2
}
$
is equivalent to
\[
(1-\alpha_c)\beta_l^+(\nu)
+\sigma\alpha_c
-\bigl(\alpha_c\gamma^+(\nu)\bigr)^2
>0.
\]
The left-hand side is increasing in $\nu$ because
$\beta_l^+(\nu)$ is increasing and $\gamma^+(\nu)$ is decreasing.
Its value at $\nu=1$ is
\[
(1-\alpha_c)\beta_l^+(1)
+\sigma\alpha_c
-\bigl(\alpha_c\gamma^+(1)\bigr)^2
>
0.9011529865>0.
\]
Hence the corrector step-length condition in \Cref{Thm:corrector-scale-bound} holds for every
$\nu\ge1$.

For the lower scalar homogeneous complementarity, write
\[
\beta_l^{++}(\nu)=\frac{(1-\alpha_c)\beta_l^+(\nu)+\sigma\alpha_c-(\alpha_c\gamma^+(\nu))^2}{1+(\alpha_c\gamma^+(\nu))^2/\nu}.
\]
The numerator is increasing in $\nu$ because $\beta_l^+(\nu)$ is increasing and $\gamma^+(\nu)$ is decreasing. The denominator is decreasing because both $\gamma^+(\nu)$ and $1/\nu$ are decreasing. Thus, $\beta_l^{++}(\nu)$ is increasing, and
\[
\beta_l^{++}(\nu)\geq\beta_l^{++}(1)>0.9002840778>0.90028.
\]
Similarly, for the upper scalar homogeneous complementarity, write
\[
\beta_u^{++}(\nu)=\frac{(1-\alpha_c)\beta_u^+(\nu)+\sigma\alpha_c+\alpha_c^2\theta(\nu)}{1-(\alpha_c\gamma^+(\nu))^2/\nu}.
\]
Its numerator is decreasing because both $\beta_u^+(\nu)$ and $\theta(\nu)$ are decreasing. Its denominator is increasing and positive because $(\alpha_c\gamma^+(\nu))^2/\nu$ is decreasing and $\omega^+(\nu)<1$. Hence 
$\beta_u^{++}(\nu)$ is decreasing, and
\[
\beta_u^{++}(\nu)\leq\beta_u^{++}(1)<0.9037523584<0.90376.
\]
We conclude that
\[
z^{++}\in\mathcal N(0.01665,0.90028,0.90376)\subseteq\mathcal N(0.02,0.9,0.905).
\]
This proves the theorem.
\end{proof}
%\fi

\bibliographystyle{siamplain}
\bibliography{references}

@article{badenbroek2022algorithm,
  title={An algorithm for nonsymmetric conic optimization inspired by \text{MOSEK}},
  author={Badenbroek, Riley and Dahl, Joachim},
  journal={Optim. Methods Softw.},
  volume={37},
  number={3},
  pages={1027--1064},
  year={2022},
  publisher={Taylor \& Francis}
}

@article{skajaa2015homogeneous,
  title={A homogeneous interior-point algorithm for nonsymmetric convex conic optimization},
  author={Skajaa, Anders and Ye, Yinyu},
  journal={Math. Program.},
  volume={150},
  number={2},
  pages={391--422},
  year={2015},
  publisher={Springer}
}

@article{papp2017homogeneous,
  title={On ``A Homogeneous Interior-Point Algorithm for Non-Symmetric Convex Conic Optimization"},
  author={Papp, D{\'a}vid and Y{\i}ld{\i}z, Sercan},
  journal={arXiv preprint arXiv:1712.00492},
  year={2017}
}

@article{dahl2022primal,
  title={A primal-dual interior-point algorithm for nonsymmetric exponential-cone optimization},
  author={Dahl, Joachim and Andersen, Erling D},
  journal={Math. Program.},
  volume={194},
  number={1},
  pages={341--370},
  year={2022},
  publisher={Springer}
}

@book{renegar2001mathematical,
  title={A Mathematical View of Interior-Point Methods in Convex Optimization},
  author={Renegar, James},
  year={2001},
  publisher={SIAM},
  address = {Philadelphia}
}

@article{papp2022alfonso,
  title={Alfonso: \text{MATLAB} package for nonsymmetric conic optimization},
  author={Papp, D{\'a}vid and Y{\i}ld{\i}z, Sercan},
  journal={INFORMS J. Comput.},
  volume={34},
  number={1},
  pages={11--19},
  year={2022},
  publisher={INFORMS}
}

@article{he2024qics,
  title={\text{QICS}: Quantum information conic solver},
  author={He, Kerry and Saunderson, James and Fawzi, Hamza},
  journal={arXiv preprint arXiv:2410.17803},
  year={2024}
}

@article{karimi2024domain,
  title={Domain-\text{D}riven \text{S}olver (\text{DDS}) Version 2.1: a \text{MATLAB}-based software package for convex optimization problems in domain-driven form},
  author={Karimi, Mehdi and Tun{\c{c}}el, Levent},
  journal={Math. Program. Comput.},
  volume={16},
  number={1},
  pages={37--92},
  year={2024},
  publisher={Springer}
}

@article{coey2022solving,
  title={Solving natural conic formulations with \text{H}ypatia.jl},
  author={Coey, Chris and Kapelevich, Lea and Vielma, Juan Pablo},
  journal={INFORMS J. Comput.},
  volume={34},
  number={5},
  pages={2686--2699},
  year={2022},
  publisher={INFORMS}
}

@article{kapelevich2024computing,
  title={Computing conjugate barrier information for nonsymmetric cones},
  author={Kapelevich, Lea and Andersen, Erling D and Vielma, Juan Pablo},
  journal={J. Optim. Theory Appl.},
  volume={202},
  number={1},
  pages={271--295},
  year={2024},
  publisher={Springer}
}

@article{dolan2002benchmarking,
  author  = {Elizabeth D. Dolan and Jorge J. Mor{\'e}},
  title   = {Benchmarking optimization software with performance profiles},
  journal = {Math. Program.},
  volume  = {91},
  number  = {2},
  pages   = {201--213},
  year    = {2002},
  //doi     = {10.1007/s101070100263}
}

@article{nesterov1998primal,
  title={Primal-dual interior-point methods for self-scaled cones},
  author={Nesterov, Yu E and Todd, Michael J},
  journal={SIAM J. Optim.},
  volume={8},
  number={2},
  pages={324--364},
  year={1998},
  publisher={SIAM}
}

@article{papp2025interior,
  title={Interior-point algorithms with full \text{N}ewton steps for nonsymmetric convex conic optimization},
  author={Papp, D{\'a}vid and Varga, Anita},
  journal={SIAM J. Optim.},
  volume={35},
  number={4},
  pages={2490--2517},
  year={2025},
  publisher={SIAM}
}

@article{nesterov2012towards,
  title={Towards non-symmetric conic optimization},
  author={Nesterov, Yurii},
  journal={Optim. Methods Softw.},
  volume={27},
  number={4-5},
  pages={893--917},
  year={2012},
  publisher={Taylor \& Francis}
}

@article{tunccel2001generalization,
  title={Generalization of primal-dual interior-point methods to convex optimization problems in conic form},
  author={Tun{\c{c}}el, Levent},
  journal={Found. Comput. Math.},
  volume={1},
  number={3},
  pages={229--254},
  year={2001},
  publisher={Springer}
}

@article{chen2025efficient,
  author  = {Chen, Y. and Goulart, P.},
  title   = {An Efficient Implementation of Interior-Point Methods for a Class of Nonsymmetric Cones},
  journal = {J. Optim. Theory Appl.},
  volume  = {204},
  number  = {33},
  year    = {2025},
  publisher={Springer}
}

@article{coey2022performance,
    title={Performance enhancements for a generic conic interior point algorithm},
    author={Chris Coey and Lea Kapelevich and Juan Pablo Vielma},
    year={2023},
    journal={Math. Program. Comput.},
    publisher={Springer},
    volume={15},
    pages={53--101}
}

@article{fawzi2023optimal,
  title={Optimal self-concordant barriers for quantum relative entropies},
  author={Fawzi, Hamza and Saunderson, James},
  journal={SIAM J. Optim.},
  volume={33},
  number={4},
  pages={2858--2884},
  year={2023},
  publisher={SIAM}
}

@article{he2026operator,
  title={Operator convexity along lines, self-concordance, and sandwiched \text{R}{\'e}nyi entropies},
  author={He, Kerry and Saunderson, James and Fawzi, Hamza},
  journal={Math. Program.},
  pages={1--30},
  year={2026},
  publisher={Springer}
}

@book{nesterov1994interior,
  title={Interior-Point Polynomial Algorithms in Convex Programming},
  author={Nesterov, Yurii and Nemirovskii, Arkadii},
  year={1994},
  publisher={SIAM},
  address = {Philadelphia}
}

@article{nesterov1997self,
  title={Self-scaled barriers and interior-point methods for convex programming},
  author={Nesterov, Yu E and Todd, Michael J},
  journal={Math. Oper. Res.},
  volume={22},
  number={1},
  pages={1--42},
  year={1997},
  publisher={INFORMS}
}

@article{myklebust2014interior,
  title={Interior-point algorithms for convex optimization based on primal-dual metrics},
  author={Myklebust, Tor and Tun{\c{c}}el, Levent},
  journal={arXiv preprint arXiv:1411.2129},
  year={2014}
}

@book{wright1997primal,
  title={Primal-Dual Interior-Point Methods},
  author={Wright, Stephen J},
  year={1997},
  publisher={SIAM}
}

@article{chandrasekaran2017relative,
  title={Relative entropy optimization and its applications},
  author={Chandrasekaran, Venkat and Shah, Parikshit},
  journal={Math. Program.},
  volume={161},
  number={1},
  pages={1--32},
  year={2017},
  publisher={Springer}
}

@article{chandrasekaran2016relative,
  title={Relative entropy relaxations for signomial optimization},
  author={Chandrasekaran, Venkat and Shah, Parikshit},
  journal={SIAM J. Optim.},
  volume={26},
  number={2},
  pages={1147--1173},
  year={2016},
  publisher={SIAM}
}

@article{karimi2025efficient,
  title={Efficient implementation of interior-point methods for quantum relative entropy},
  author={Karimi, Mehdi and Tun{\c{c}}el, Levent},
  journal={INFORMS J. Comput.},
  volume={37},
  number={1},
  pages={3--21},
  year={2025},
  publisher={INFORMS}
}

@article{lin2024generalized,
  title={Generalized power cones: optimal error bounds and automorphisms},
  author={Lin, Ying and Lindstrom, Scott B and Louren{\c{c}}o, Bruno F and Pong, Ting Kei},
  journal={SIAM J. Optim.},
  volume={34},
  number={2},
  pages={1316--1340},
  year={2024},
  publisher={SIAM}
}

@article{lindstrom2023error,
  title={Error bounds, facial residual functions and applications to the exponential cone},
  author={Lindstrom, Scott B and Louren{\c{c}}o, Bruno F and Pong, Ting Kei},
  journal={Math. Program.},
  volume={200},
  number={1},
  pages={229--278},
  year={2023},
  publisher={Springer}
}

@book{golub2013matrix,
  title     = {Matrix Computations},
  author    = {Golub, Gene H. and Van Loan, Charles F.},
  edition   = {4th},
  publisher = {Johns Hopkins University Press},
  address   = {Baltimore},
  year      = {2013}
}

@article{hauser2002self,
  title={Self-scaled barrier functions on symmetric cones and their classification},
  author={Hauser, Raphael A and G{\"u}ler, Osman},
  journal={Found. Comput. Math.},
  volume={2},
  number={2},
  pages={121--143},
  year={2002},
  publisher={Springer}
}

@article{tunccel1998primal,
  title={Primal-dual symmetry and scale invariance of interior-point algorithms for convex optimization},
  author={Tun{\c{c}}el, Levent},
  journal={Math. Oper. Res.},
  volume={23},
  number={3},
  pages={708--718},
  year={1998},
  publisher={INFORMS}
}

@article{roy2022self,
  title={On self-concordant barriers for generalized power cones},
  author={Roy, Scott and Xiao, Lin},
  journal={Optim. Lett.},
  volume={16},
  number={2},
  pages={681--694},
  year={2022},
  publisher={Springer}
}

@article{goulart2026clarabel,
  title={Clarabel: An interior-point solver for conic programs with quadratic objectives},
  author={Goulart, Paul J and Chen, Yuwen},
  journal={Math. Program. Comput.},
  pages={1--83},
  year={2026},
  publisher={Springer}
}
\end{document}